\theoremstyle{plain}
\newtheorem{thm}{Theorem}[section]
\newtheorem{cor}[thm]{Corollary}
\newtheorem{lem}[thm]{Lemma}
\newtheorem{prop}[thm]{Proposition}
\theoremstyle{definition}
\newtheorem{defn}[thm]{Definition}
\theoremstyle{remark}
\newtheorem{ex}[thm]{Example}
\newtheorem{question}[thm]{Question}
\newtheorem{conj}[thm]{Conjecture}
\newcommand{\cH}{\mathcal{H}}
\newcommand{\cB}{\mathcal{B}}
\newcommand{\bZ}{\mathbb{Z}}
\newcommand{\bN}{\mathbb{N}}
\newcommand{\bR}{\mathbb{R}}
\newcommand{\bC}{\mathbb{C}}
\newcommand{\bF}{\mathbb{F}}
\newcommand{\bT}{\mathbb{T}}
\newcommand{\spark}{\operatorname{spark}}
\newcommand{\rar}{\rightarrow}
\newcommand{\set}[2]{\left\{\,  #1  \,\, \middle\vert \,\, #2 \, \right\}  }
\newcommand{\QR}{\operatorname{QR}}
\newcommand{\NR}{\operatorname{NR}}
\newcommand{\AGL}{\operatorname{AGL}}
\newcommand{\U}{\operatorname{U}}
\newcommand{\ip}[2]{\left\langle#1,#2\right\rangle}
\newcommand{\norm}[1]{\left\Vert#1\right\Vert}
\newcommand{\abs}[1]{\left\vert#1\right\vert}
\newcommand{\absip}[2]{\left\vert\langle#1,#2\rangle\right\vert}
\newcommand{\beq}{\begin{equation}}
\newcommand{\eeq}{\end{equation}}
\DeclareMathOperator*{\TP}{TP}
\DeclareMathOperator*{\diag}{diag}
\DeclareMathOperator*{\Sym}{Sym}
\begin{document}

\title{$k$-Homogeneous Equiangular Tight Frames}

\author{Emily J.\ King\thanks{\texttt{emily.king@colostate.edu}, Department of Mathematics, Colorado State University, Fort Collins, CO}}

\maketitle

\begin{center}
\textit{This is dedicated to David Larson, a great mentor and researcher.}
 \end{center} 

 \begin{abstract}
We consider geometric and combinatorial characterizations of equiangular tight frames (ETFs), with the former concerning homogeneity of the vector and line symmetry groups and the latter the matroid structure. We introduce the concept of the bender of a frame, which is the collection of short circuits, which in turn are the dependent subsets of frame vectors of minimum size. We also show that ETFs with $k$-homogeneous line symmetry groups have benders which are $k$-designs.  Paley ETFs are a known class of ETFs constructed using number theory.  We determine the line and vector symmetry groups of all Paley ETFs and show that they are $2$-homogeneous. We additionally characterize all $k$-homogeneous ETFs for $k\geq 3$. Finally, we revisit David Larson's AMS Memoirs \emph{Frames, Bases, and Group Representations} \cite{HL00} coauthored with Deguang Han and  \emph{Wandering Vectors for Unitary Systems and Orthogonal Wavelets} \cite{DaL98} coauthored with Xingde Dai with a modern eye and focus on finite-dimensional Hilbert spaces. \textbf{Keywords}: equiangular tight frames, ETFs, $k$-homogeneous, $k$-transitive, Paley difference sets, $t$-designs, matroid circuits \textbf{MSC2020}: 42C15, 20B25, 05B10, 05B35
\end{abstract}

\section{Introduction}

Frames are generalizations of orthonormal bases that have found wide applications in signal and image processing, compressed sensing, phase retrieval, and even characterizing emergent phenomenon in neural networks \cite{CaKBook,KoCh07,FoRa13,papyan2020prevalence}.  Frames were originally introduced in~\cite{DS52} as a tool to loosen the definition of Fourier series.  Frames over finite dimensional Hilbert spaces (see, e.g.,~\cite{Waldron18}), in particular so-called \emph{equiangular tight frames (ETFs)} are intimately related to open problems in quantum information theory (like \emph{Zauner's conjecture} \cite{Zauner1999,Zauner2011}) and combinatorial design theory (like the \emph{Gillespie conjecture} \cite{Gill18}\footnote{It is possible that this conjecture is folklore.  See, e.g., p.\ 263 of \cite{godsil2001algebraic}.}).  In this paper, we introduce the concept of the \emph{bender} of a frame, which is the collection of all linearly dependent subsets of frame vectors of minimum size (called \emph{short circuits}). We prove algebraic and combinatorial properties of an infinite class of ETFs (\emph{Paley ETFs}). We also characterize all ETFs which have $k$-homogeneous symmetry groups for $k \geq 3$.  Finally, we re-examine two of Dave Larson's fundamental manuscripts from the turn of the millennium \cite{HL00,DaL98} in the context of frames in finite dimensions.

The author first learned about frames in 2002 in Dave Larson's Research Experiences for Undergraduates (REU) on Frames and Wavelets, which ran for many years and inspired numerous young mathematicians.  She has also used his book \emph{Frames for undergraduates} \cite{han2007frames}---cowritten with Deguang Han, Keri Kornelson, and Eric Weber, which came out of the experience running the REU---when working with students from high school to master's level. Almost a quarter of a century later, the REU run by Dave Larson still has a large impact on her research program (and the research of many other alumni of the REU).

The basics of equiangular tight frames are covered in Section~\ref{sec:ETFs}, followed by sections on their symmetries Section~\ref{sec:geomETF} and linear dependencies Section~\ref{sec:combETF}. In Definition~\ref{defn:bnder} in Section~\ref{sec:combETF}, a new combinatorial concept is introduced: the \emph{bender}, which consists of the \emph{short circuits} of an ETF, which are the dependent subsets of minimum size.  Section~\ref{sec:Paley} contains results concerning Paley ETFs, with the main results being Theorems~\ref{thm:Paleydbhm} and~\ref{thm:Paleydbhm2}, in which the symmetry groups of all Paley ETFs are given.  Section~\ref{sec:homoETF} concerns $k$-homogeneous ETFs, including Theorem~\ref{thm:khomETF} which characterizes $k$-homogeneous ETFs for $k \geq 3$.  Finally, some key results in Dave Larson's Memoirs \cite{HL00,DaL98} are contextualized within the framework of modern research on finite frames in Section~\ref{sec:memoirs}.  This section also includes a list of some questions and conjectures.

\subsection{Equiangular Tight Frames}\label{sec:ETFs}
For any separable Hilbert space $\cH$ and any orthonormal basis $\Phi = (\varphi_j)_{j\in J}$, Parseval's equality yields that for all $x \in \cH$
\[
\norm{x}^2 = \sum_{j \in J} \absip{x}{\varphi_j}^2.
\]
What might be surprising is that there are collections of vectors $\Phi$ which are not orthonormal bases which also satisfy that equality exactly (\emph{Parseval frames}), in a scaled version (\emph{tight frames}), or in a loosened version (\emph{frames}).  In infinite dimensions, even being a frame requires significant structure, and there is an immense amount of research about such vectors (see, e.g., \cite{Chr03v2}). 
\begin{defn}\label{defn:frame}
For a separable Hilbert space $\cH$ and $\Phi = (\varphi_j)_{j\in J}$ in $\cH$, $\Phi$ is a \emph{tight frame} if there exists $A>0$ such that for all $x \in \cH$
\[
A\norm{x}^2 = \sum_{j \in J} \absip{x}{\varphi_j}^2.
\]
$A$ is called the \emph{frame bound}.
\end{defn}
In finite dimensions, any spanning set is a frame (in general, not tight), and one often asks for additional structure, like being an equiangular tight frame.  In this section only, we will use the notation $\bF$ (without a subscript) to mean $\bR$ or $\bC$.
\begin{defn}
Let $\Phi = (\varphi_j)_{j=1}^n \in \bF^{d\times n}$ for $d \leq n$. We call $\Phi$ \emph{equiangular} if there exist $a\leq b$ with $b> 0$ such that 
\[
\absip{\varphi_j}{\varphi_k} =(b-a)\delta_{j,k} + a.
\]
If additionally, $\Phi$ is a tight frame, then we call $\Phi$ an \emph{equiangular tight frame (ETF)} or a \emph{$(d,n)$-ETF}.

\end{defn}
We will employ the common abuse of notation using $\Phi$ to both mean the frame itself and the short fat matrix with columns the frame vectors. With that notation, $\Phi$ is a tight frame precisely when $\Phi \Phi^\ast = A I$.
Equiangular tight frames are optimally robust to noise and up to two erasures \cite{GKK01,StH03,HoPa04}, are important in quantum information theory~\cite{Zauner1999,Zauner2011}, and provide optimal projective codes~\cite{JKM19}.  As mathematical objects which interact with a variety of fields, there are a number of conjectures about their existence or the lack thereof: Zauner's conjecture~\cite{Zauner1999,Zauner2011}, the Gillespie conjecture~\cite{Gill18}, the $(d,2d)$ conjecture~\cite{fallon2023optimal,iverson2024more}, and the Fickus-Jasper conjecture~\cite{fickus2019GDD}.

For many choices of $d$, $n$, $\bF$, there does not exist an ETF. (See, e.g., \cite{FM15}.)  When $n=d$, the ETFs are simply equal-norm orthogonal bases.  Up to equivalence (see below) when $n=d+1$, the vectors span lines which go through vertices of a regular simplex.
\begin{defn}
If $\Phi$ is an ETF of $d+1$ vectors in $\bF^d$, we call it a \emph{simplex ETF}.  If further, the inner product of any two distinct vectors in a simplex ETF is negative (necessarily the same negative number since they are equiangular), then we call it a \emph{regular simplex ETF}.
\end{defn}

In a certain sense, every simplex ETF is equivalent to a regular simplex ETF. Let $\U(d)$ denote the $d \times d$ orthogonal matrices when $\bF=\bR$ and $d\times d$ unitary matrices when $\bF=\bC$. Then there is always a $U \in \U(d)$ and a choice of scalars $\eta_j \in \U(1)$ such that
\[
\left(U \varphi_j \eta_j\right)_{j=1}^{d+1}
\]
have real entries and point to vertices of a regular $d$-simplex centered at the origin. In general, one may ask if two sequences of vectors are fundamentally ``the same'', i.e., span the same set of lines up to a global change of basis. Fortunately, there is a relatively simple test.
\begin{defn}
Assume that $\Phi = (\varphi_j)_{j=1}^n,\Psi = (\psi_j)_{j=1}^n   \in \bF^{d\times n}$.  We say that the vectors are \emph{switching equivalent} (or \emph{projectively unitarily equivalent} or \emph{isometrically isomorphic}) if there exists a 
$U \in \U(d)$ and a choice of scalars $\eta_j \in \U(1)$ such that
\[
\psi_j = U \varphi_j \eta_j, \quad j=1,2, \hdots, n.
\]
\end{defn}
We note that some authors also allow permutation of the indices in their definition of switching equivalence. The test for switching equivalence has been rediscovered in different forms over many years (e.g., \cite{seidel1976survey,gallagher1977orthogonal,brehm1990shape,AFF11,ChWa16}).
\begin{prop}\label{prop:switch}
Let $\Phi = (\varphi_j)_{j=1}^n,\Psi = (\psi_j)_{j=1}^n   \in \bF^{d\times n}$ be ETFs. They are switching equivalent if and only if  for $j \neq k \neq \ell \neq j$
\[
\ip{\varphi_j}{\varphi_k}\ip{\varphi_k}{\varphi_{\ell}}\ip{\varphi_{\ell}}{\varphi_j}=\ip{\psi_j}{\psi_k}\ip{\psi_k}{\psi_{\ell}}\ip{\psi_{\ell}}{\psi_j}.
\]
\end{prop}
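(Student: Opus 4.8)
The plan is to recast switching equivalence as a statement about inner products and then recognize the triple-product hypothesis as exactly the solvability criterion for a system of phases. Throughout I will use that a tight frame has full rank $d$ (since $\Phi\Phi^\ast = AI$ is invertible). I will first reduce to the unit-norm case. For genuine ETFs, i.e. those with $d<n$ so that the common off-diagonal magnitude $a$ is nonzero, and for $n\geq 3$, the modulus of every triple product equals $a^3$; hence the hypothesis forces the two ETFs to share the same value of $a$, and therefore—via the Welch-bound relation tying $a$ to the common squared norm $b$ for fixed $(d,n)$—the same $b$. Rescaling each frame by $1/\sqrt{b}$ (the same constant for both) makes them unit-norm, which is harmless, since neither switching equivalence nor the triple-product condition is affected by a common positive rescaling of both $\Phi$ and $\Psi$. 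The degenerate regimes $n\leq 2$ and $a=0$ (which is exactly $n=d$) are checked directly.

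The forward direction is a direct computation. If $\psi_j = U\varphi_j\eta_j$ with $U\in\U(d)$ and $\eta_j\in\U(1)$, then $\ip{\psi_j}{\psi_k} = \eta_j\overline{\eta_k}\ip{\varphi_j}{\varphi_k}$ because $U$ preserves inner products, and in the product $\ip{\psi_j}{\psi_k}\ip{\psi_k}{\psi_\ell}\ip{\psi_\ell}{\psi_j}$ the phase factors telescope to $\abs{\eta_j}^2\abs{\eta_k}^2\abs{\eta_\ell}^2 = 1$, leaving the triple product unchanged.

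For the converse, set $\tilde\varphi_j := \varphi_j\eta_j$ for phases $\eta_j$ yet to be determined; note that $(\tilde\varphi_j)$ still spans $\bF^d$. The engine is the Gram-matrix rigidity lemma: if two rank-$d$ families in $\bF^{d\times n}$ have identical inner products, then one is the image of the other under a single unitary. This is proved by observing that $\sum_j c_j\tilde\varphi_j$ and $\sum_j c_j\psi_j$ then have equal norms for all coefficient vectors, so $\tilde\varphi_j \mapsto \psi_j$ extends to a well-defined linear isometry, i.e. a unitary, on their common span $\bF^d$. Consequently switching equivalence is equivalent to the existence of phases $\eta_j\in\U(1)$ with $\ip{\tilde\varphi_j}{\tilde\varphi_k} = \ip{\psi_j}{\psi_k}$ for all $j,k$. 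Since $\ip{\tilde\varphi_j}{\tilde\varphi_k} = \eta_j\overline{\eta_k}\ip{\varphi_j}{\varphi_k}$ and the diagonal ($j=k$) equations hold automatically for unit vectors, everything reduces to finding phases with $\eta_j\overline{\eta_k} = \omega_{jk}$ for $j\neq k$, where $\omega_{jk} := \ip{\psi_j}{\psi_k}/\ip{\varphi_j}{\varphi_k}$ is a well-defined unit-modulus scalar because equiangularity makes every off-diagonal inner product nonzero of modulus $a$.

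The key step—and the only place the hypothesis enters—is that the triple-product equality is precisely the coboundary condition making this phase system solvable. Indeed $\omega_{jk}\omega_{k\ell}\omega_{\ell j}$ equals the ratio of the two triple products, so the hypothesis reads $\omega_{jk}\omega_{k\ell}\omega_{\ell j} = 1$ for all distinct $j,k,\ell$. I would then anchor $\eta_1 = 1$ and define $\eta_j := \omega_{j1}$ for $j\geq 2$; using $\overline{\omega_{k1}} = \omega_{1k}$ together with the triangle identity $\omega_{j1}\omega_{1k}\omega_{kj} = 1$ for the distinct triple $(j,1,k)$ yields $\eta_j\overline{\eta_k} = \omega_{j1}\omega_{1k} = \omega_{jk}$, as required, with the cases involving the index $1$ being trivial. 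I expect the main obstacle to be organizational rather than deep: keeping the conjugation conventions consistent between the forward telescoping and the rigidity lemma, and cleanly dispatching the degenerate regimes ($a=0$, i.e. orthogonal bases, and $n\leq 2$), where the condition is vacuous, so that the reduction to unit-norm frames is genuinely valid.
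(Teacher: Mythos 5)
Your proof is correct, and it is worth saying up front that the paper itself contains no proof of Proposition~\ref{prop:switch}: the result is stated as folklore with citations (Seidel; Gallagher--Proulx; Brehm; Appleby--Flammia--Fuchs; Chien--Waldron), so there is no in-paper argument to compare against. What you wrote is essentially the standard argument from that literature, executed cleanly: the forward direction by telescoping phases; the converse by observing that the ratios $\omega_{jk}=\ip{\psi_j}{\psi_k}/\ip{\varphi_j}{\varphi_k}$ are well-defined unimodular scalars (equiangularity with $d<n$ makes the denominators nonzero), that the triple-product hypothesis is exactly the triangle identity $\omega_{jk}\omega_{k\ell}\omega_{\ell j}=1$, that anchoring $\eta_1=1$, $\eta_j=\omega_{j1}$ solves $\eta_j\overline{\eta_k}=\omega_{jk}$, and that equal Gram matrices of spanning families yield a unitary intertwiner. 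Your preliminary normalization step is a genuinely necessary piece that is often glossed over: the hypothesis gives $a_\Phi^3=\abs{\TP}=a_\Psi^3$, hence $a_\Phi=a_\Psi$, and the Welch-bound identity $a^2=b^2(n-d)/(d(n-1))$ then forces $b_\Phi=b_\Psi$, which is what legitimizes treating both frames as unit-norm and makes the $\omega_{jk}$ unimodular.

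One caveat, which is really a defect of the statement rather than of your argument: your claim that the degenerate regimes ($n\le 2$, or $a=0$, i.e.\ $n=d$) can be ``checked directly'' is not right, because in those regimes the proposition as literally stated is false. For $n=d\ge 3$, take $\Phi=I_d$ and $\Psi=2I_d$: both are ETFs under the paper's definition, all triple products vanish on both sides, yet the frames are not switching equivalent since unitaries and unimodular scalars preserve norms; for $n\le 2$ the condition is vacuous and the same norm mismatch gives counterexamples. Your Welch-bound step is precisely what rules this out in the main regime $3\le n$ and $d<n$, where the triple products carry enough information to pin down the common norm; outside that regime no proof can succeed without adding an equal-norm hypothesis. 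You should either restrict the degenerate cases to equal-norm frames or note explicitly that the equivalence only holds unconditionally when $d<n$ and $n\ge 3$.
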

We will use the notation $\TP(j,k,\ell)=\ip{\varphi_j}{\varphi_k}\ip{\varphi_k}{\varphi_{\ell}}\ip{\varphi_{\ell}}{\varphi_j}$, where $\Phi$ will be clear from context.

Every $(d,n)$-ETF over $\bF$ has an (in general, infinitely many) associated $(n-d,n)$-ETF over $\bF$ called a \emph{Naimark complement}. Naimark complements of Parseval frames (not in general equiangular and allowing infinite dimensional Hilbert spaces) were first studied by Han and Larson~\cite{HL00}, where they were called \emph{complementary frames}.  It was later realized that these results followed from Naimark's dilation theorem concerning the lifting of positive operator valued measures~\cite{neumark1943representation}.
\begin{prop}
Let $\Phi$ be a $(d,n)$-ETF with $d < n$ over $\bF$ with  frame bound $A$. Then any $\Psi \in \bF^{(n-d)\times n}$ that satisfies
\[
\Psi^\ast \Psi = A I- \Phi^\ast \Phi
\]
is an $(n-d,n)$-ETF over $\bF$ with frame bound $A$ called a \emph{Naimark complement} of $\Phi$.
\end{prop}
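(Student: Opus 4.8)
The plan is to transfer the two defining properties of an ETF---tightness and equiangularity---from $\Phi$ to $\Psi$ by working entirely with the Gram matrices $G := \Phi^\ast\Phi$ and $\Psi^\ast\Psi = AI - G$. The single fact that makes everything go is that tightness of $\Phi$ forces $\tfrac1A G$ to be an orthogonal projection. Indeed, from $\Phi\Phi^\ast = AI_d$ I would compute $G^2 = \Phi^\ast(\Phi\Phi^\ast)\Phi = A\,\Phi^\ast\Phi = AG$, so that $G^2 = AG$; since $G$ is Hermitian this says $P := \tfrac1A G$ is an orthogonal projection. Its rank equals $\operatorname{rank}\Phi = d$ (as $\Phi$ spans $\bF^d$ and $\Phi\Phi^\ast = AI_d$ is invertible), so $\tfrac1A\Psi^\ast\Psi = I - P$ is the complementary projection, of rank $n-d$. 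In particular $\operatorname{rank}\Psi = \operatorname{rank}(\Psi^\ast\Psi) = n-d$, i.e.\ $\Psi$ has full row rank, a fact I will use repeatedly.

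To obtain tightness of $\Psi$, I first observe that the column space of $\Psi^\ast$ equals $\operatorname{ran}(\Psi^\ast\Psi) = \operatorname{ran}(I-P) = \ker P$, so $P\Psi^\ast = 0$ and hence $\Psi P = (P\Psi^\ast)^\ast = 0$. Then
\[
(\Psi\Psi^\ast)^2 = \Psi\bigl(\Psi^\ast\Psi\bigr)\Psi^\ast = \Psi\,A(I-P)\,\Psi^\ast = A\,\Psi\Psi^\ast - A\,\Psi P\Psi^\ast = A\,\Psi\Psi^\ast,
\]
so $\tfrac1A\Psi\Psi^\ast$ is an orthogonal projection on $\bF^{n-d}$. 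But $\Psi\Psi^\ast$ is $(n-d)\times(n-d)$ and, because $\Psi$ has full row rank, invertible; a full-rank projection is the identity, so $\Psi\Psi^\ast = AI_{n-d}$. Thus $\Psi$ is a tight frame with frame bound $A$.

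For equiangularity I would simply read off the entries of $\Psi^\ast\Psi = AI - G$. Writing $b = \norm{\varphi_j}^2$ and $a = \absip{\varphi_j}{\varphi_k}$ ($j \neq k$) for the common diagonal and off-diagonal magnitudes of $G$, the diagonal entries of $\Psi^\ast\Psi$ are all $A-b$ and each off-diagonal entry has magnitude $a$; hence $\norm{\psi_j}^2 = A-b$ for every $j$ and $\absip{\psi_j}{\psi_k} = a$ for $j \neq k$, which is exactly equiangularity. It remains to verify the sign conditions of the definition: the trace identity $Ad = \tr(\Phi\Phi^\ast) = \tr(G) = nb$ gives $A = nb/d$, whence $A - b = b(n-d)/d > 0$, so the new squared norm is positive, while Cauchy--Schwarz applied to $\Psi$ yields $a = \absip{\psi_j}{\psi_k} \le \norm{\psi_j}\norm{\psi_k} = A-b$ for free. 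Therefore $\Psi$ is an $(n-d,n)$-ETF with frame bound $A$. (Existence of such a $\Psi$ is not at issue in the statement as phrased, but it follows from the same computation: $AI - G$ is positive semidefinite of rank $n-d$, so it factors as $\Psi^\ast\Psi$ for some $\Psi \in \bF^{(n-d)\times n}$.)

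The only genuinely delicate step is the transfer of tightness---the passage from the hypothesis on $\Psi^\ast\Psi$ to the conclusion $\Psi\Psi^\ast = AI_{n-d}$---and the idempotency identity $G^2 = AG$ is precisely what unlocks it; the equiangularity half is then a routine inspection of the Gram matrix entries.
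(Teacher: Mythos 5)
Your proof is correct. Note, however, that the paper gives no proof of this proposition at all: it is stated as a known background result (Naimark complementation, traced historically to Han--Larson's complementary frames and Naimark's dilation theorem), so there is nothing in the paper to compare against step by step. Your argument is a complete and standard verification: the identity $G^2 = AG$ making $\tfrac1A G$ a rank-$d$ projection, the deduction $\Psi P = 0$ from $\operatorname{ran}(\Psi^\ast) = \operatorname{ran}(\Psi^\ast\Psi) = \ker P$, and the full-row-rank argument forcing $\Psi\Psi^\ast = AI_{n-d}$ are all sound, and the equiangularity check (including the sign conditions $A-b = b(n-d)/d > 0$ via the trace identity and $a \le A-b$ via Cauchy--Schwarz, which the paper's definition of equiangular does require) is exactly the routine inspection it should be.
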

Note that the inner products of distinct vectors of a Naimark complement are simply the negatives of the corresponding vectors in the first frame.  Thus, by Proposition~\ref{prop:switch}, characterizing switching equivalence of an ETF is equivalent to solving the problem for Naimark complements.  Hence, when studying the switching equivalence of ETFs it suffices to study ETFs with $n \geq 2d$.

Given the historical connection of frames to Fourier bases~\cite{DS52}, it is perhaps not surprising that Fourier-related concepts arise in constructions of various ``nice'' frames, like frames in $L^2(\mu)$ for $\mu$ a fractal measure (see, e.g., \cite{dutkay2011beurling}) or Gabor frames for $L^2(\bR^d)$ (see, e.g., \cite{Purple}).  Discrete Fourier transform matrices may also be used to define ``nice'' frames in finite dimensions.
\begin{defn}\label{defn:Fourmat}
Let $\bZ_m$ denote the integers mod $m$, and let $\zeta_m$ be a primitive $m$th root of unity. The \emph{Fourier matrix} of $\bZ_m$ is the $m \times m$ matrix with $(i,j)$-entry $\zeta^{ij}$, where indexing is $0, 1, \hdots, m-1$. For $m_1, \, m_2, ,\, \hdots,\, m_k \in \{2,\, 3, \,\hdots\,\}$, the  \emph{Fourier matrix} of $\bZ_{m_1} \times \bZ_{m_2} \times \cdots \times \bZ_{m_k}$ is the Kronecker product of the Fourier matrices of the cyclic factors.
\end{defn}
Note that this construction via Kronecker products yields a natural way to view the rows (and columns) of the Fourier matrix as being indexed lexicographically by the elements of $\bZ_{m_1} \times \bZ_{m_2} \times \cdots \times \bZ_{m_k}$. By the fundamental theorem of finitely generated abelian groups, the character table of any finite abelian group is (up to possible permutations of the rows and columns) a Fourier matrix. Any $n \times n$ Fourier matrix is a unitary matrix scaled by $\sqrt{n}$; thus, any $d$-row all-column (i.e., $n$-column) submatrix $\Phi$ of an $n \times n$ Fourier matrix satisfies $\Phi \Phi^\ast = n I_d$.  So, such submatrices always yield tight frames.  We can completely characterize when such submatrices yield ETFs using combinatorial design theory (see, e.g., \cite{Handbook}).
\begin{defn}
Let $G$ be a finite abelian group of size $v$ written additively.  A \emph{$(v,k,\lambda)$-difference set} is a subset $X \subseteq G$ of size $k$ such that the multiset of differences contains each non-identity element exactly $\lambda$ times.
\end{defn}
\begin{ex}\label{ex:paleyDS1}
Consider $\{ 1, 2, 4\} \subset \bZ_7$.  Then the difference table is
\[
\begin{array}{c||c|c|c}
- & 1 & 2 & 4 \\ \hline \hline
 1 & 0 & 1 & 3\\ \hline
 2 & 6 & 0 &2 \\ \hline
 4& 4 & 5 & 0
\end{array}
\]
That is, the multiset of differences contains each non-identity element of $\bZ_7$ exactly once.  Thus, $\{1, 2, 4\}$ is a $(7,3,1)$-difference set.
\end{ex}
In any abelian group of size $v$, any subset of size $1$, $v-1$, or $v$ trivially forms a difference set. The difference set in Example~\ref{ex:paleyDS1} is the smallest nontrivial difference set.  In particular, not all abelian groups have a nontrivial difference set.
Examples of the construction of ETFs using difference sets appeared in \cite{StH03,DiFe07,GoRo09}, while the following theorem which completely characterizes the construction may be found in \cite{XZG05}. 
\begin{thm}\label{thm:diffsetcon}
Let $G$ be a finite abelian group with Fourier matrix $F$.  Let $\Phi$ be an all-column submatrix of $F$, with rows indexed by $X$.  Then $\Phi$ is an ETF if and only if $X$ is a difference set in $G$.
\end{thm}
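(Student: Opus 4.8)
The plan is to translate equiangularity into the statement that a certain character sum has constant modulus, and then to recognize that sum as the Fourier transform of the difference-counting function of $X$. Write $v=\abs{G}$ and fix the self-dual identification of $G$ with its dual coming from the Kronecker construction, so that the $(g,h)$-entry of $F$ is $\chi_h(g)$ (in the cyclic case $\chi_h(g)=\zeta_m^{gh}$). The frame vectors are the columns $\varphi_h=(\chi_h(g))_{g\in X}$, $h\in G$. Since $\Phi$ is an all-column submatrix of a $\sqrt{v}$-scaled unitary, the excerpt already gives $\Phi\Phi^\ast=vI_{\abs{X}}$, so $\Phi$ is automatically tight and the entire content is to characterize when it is equiangular. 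As every entry of $F$ has modulus $1$, we have $\norm{\varphi_h}^2=\abs{X}$ for all $h$, so the diagonal inner products are already constant; thus $\Phi$ is equiangular iff $\absip{\varphi_h}{\varphi_{h'}}$ is constant over $h\neq h'$.

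Next I would compute the off-diagonal inner products. Because $h\mapsto\chi_h$ is a homomorphism into the dual group, $\chi_h(g)\overline{\chi_{h'}(g)}=\chi_{h-h'}(g)$, whence
\[
\ip{\varphi_h}{\varphi_{h'}}=\sum_{g\in X}\chi_{h-h'}(g)=:S(h-h').
\]
As $h\neq h'$ ranges over ordered pairs, $t:=h-h'$ ranges over $G\setminus\{0\}$, so $\Phi$ is equiangular iff $\abs{S(t)}^2$ is constant on $G\setminus\{0\}$. The bridge to difference sets is the identity
\[
\abs{S(t)}^2=\sum_{x,y\in X}\chi_t(x-y)=\sum_{s\in G}d(s)\,\chi_t(s),
\]
where $d(s)=\#\{(x,y)\in X\times X:\ x-y=s\}$ is the difference-counting function; by definition $X$ is a $(v,\abs{X},\lambda)$-difference set exactly when $d(s)\equiv\lambda$ on $G\setminus\{0\}$, while $d(0)=\abs{X}$ always.

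For the forward direction I would substitute $d(0)=\abs{X}$ and $d(s)=\lambda$ for $s\neq0$ into the identity and use character orthogonality $\sum_{s\in G}\chi_t(s)=v\,\delta_{t,0}$ to collapse it: for $t\neq0$ this gives $\abs{S(t)}^2=\abs{X}-\lambda$, a constant, so $\Phi$ is an ETF. For the converse I run the same identity through Fourier inversion, which reads $t\mapsto\abs{S(t)}^2$ as the transform of $d$ and recovers, for $s\neq0$,
\[
d(s)=\frac1v\sum_{t\in G}\abs{S(t)}^2\,\overline{\chi_t(s)}=\frac1v\Bigl(\abs{X}^2+\sum_{t\neq0}\abs{S(t)}^2\,\overline{\chi_t(s)}\Bigr),
\]
using $S(0)=\abs{X}$. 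If $\abs{S(t)}^2\equiv c$ on $G\setminus\{0\}$, then orthogonality makes the remaining sum equal to $-c$, so $d(s)=(\abs{X}^2-c)/v$ is independent of $s\neq0$, i.e.\ $X$ is a difference set.

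This is clean Fourier analysis, so the work is bookkeeping rather than a deep obstacle. The point needing the most care is that $G$ may be noncyclic: I must use the self-dual pairing from the Kronecker product so that $h\mapsto\chi_h$ is a genuine homomorphism and $S(h-h')$ really is a single character sum, rather than arguing only for $\bZ_m$. The one remaining check is that the trivial difference sets ($\abs{X}\in\{1,v-1,v\}$) are consistent with the statement; for instance $X=G$ yields orthogonal columns, the degenerate case $a=0$ permitted by the definition of equiangular.
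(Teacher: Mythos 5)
Your proof is correct. Be aware that the paper itself does not prove Theorem~\ref{thm:diffsetcon}; it is quoted as a known result from \cite{XZG05}, so there is no internal argument to compare against. Your route---tightness for free from the scaled-unitary rows, reduction of equiangularity to constancy of $\abs{S(t)}$ with $S(t)=\sum_{g\in X}\chi_t(g)$ on $G\setminus\{0\}$, the forward direction by character orthogonality, and the converse by Fourier inversion of the difference-counting function $d$---is essentially the standard character-theoretic proof in that reference, and both directions, including the noncyclic case via the self-dual Kronecker pairing, are handled soundly.
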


\subsection{The Geometric Structure of ETFs}\label{sec:geomETF}
One way to further understand ETFs is by characterizing their geometric and combinatorial properties via their symmetries and linear dependencies.  We introduce the former concept in this section and the latter in the next section.
\begin{defn}
Let $\Phi=(\varphi_j)_{j=1}^n$ be a frame for $\bF^d$.  We call $U \in \U(d)$ a \emph{vector automorphism of $\Phi$} if there exists a permutation $\sigma \in S_n$ such that $U\varphi_j = \varphi_{\sigma(j)}$ for $j = 1, \hdots n$.  We call $U \in \U(d)$ a \emph{line automorphism of $\Phi$} if there exists a permutation $\sigma \in S_n$ such that $U \varphi_j \varphi_j^\ast U^\ast = \varphi_{\sigma(j)} \varphi_{\sigma(j)}^\ast$ for $j = 1, \hdots n$.  The subgroup of $S_n$ induced from the vector automorphisms is called the \emph{vector symmetry group $\Sym_v(\Phi)$ of $\Phi$}, and the subgroup of $S_n$ induced from the line automorphisms is called the \emph{line symmetry group $\Sym_{\ell}(\Phi)$ of $\Phi$}.
\end{defn}

\begin{ex}\label{ex:fourorbit}
Let $G$ be a finite abelian group with $n \times n$ Fourier matrix $F$.  Let $\Phi$ be a $d$-row, all-column submatrix of $F$, which is necessarily a tight frame.  Further, define $M^{\{j\}}$ for each frame index $j$ as the $d \times d$ diagonal matrix with diagonal entries the entries of $\varphi_j$. Then $\{M^{\{j\}}\}_j$ is a subgroup of $\U(d)$ which is a representation of $G$.  Also, $F$ is the orbit of the all-ones vector $\mathbbm{1}$ under $\{M^{\{j\}}\}_j$, where $M^{\{j\}}\mathbbm{1} = \varphi_j$.  This in turn implies that $\Sym_v(\Phi) \leq \Sym_{\ell}(\Phi)$ have a subgroup isomorphic to $G$. 
\end{ex}

We will often ask when the automorphism groups have extra structure.
\begin{defn}
An action of a group $G$ on a non-empty set $X = \set{x_i}{i\in\{1,\hdots,n\}}$  is \emph{$k$-transitive} if for each ordered $k$-tuple with no repeated elements $(x_{i_1}, \hdots, x_{i_k})$ and each permutation $\sigma\in S_n$, there exists a $g \in G$ such that (as ordered tuples)
\[
(g \cdot x_{i_1}, \, g \cdot x_{i_2},\, \hdots,\, g \cdot x_{i_k}) = (x_{\sigma(i_1)}, \, x_{\sigma(i_2)},\, \hdots,\, x_{\sigma(i_k)}). 
\]
The action is \emph{$k$-homogeneous} if for each set of $k$ elements of $X$ with no repeated elements $\{x_{i_1}, \hdots, x_{i_k}\}$ and each permutation $\sigma\in S_n$, there exists a $g \in G$ such that (as sets)
\[
\{g \cdot x_{i_1}, \, g \cdot x_{i_2},\, \hdots,\, g \cdot x_{i_k}\} = \{x_{\sigma(i_1)}, \, x_{\sigma(i_2)},\, \hdots,\, x_{\sigma(i_k)}\}. 
\]
A frame is \emph{$k$-transitive} (respectively, \emph{$k$-homogeneous}) if its line automorphism group is $k$-transitive (respectively, $k$-homogeneous).
\end{defn}
Clearly, $k$-transitivity implies both $(k-1)$-transitivity and $k$-homogeneity. However, the relationship between $k$-homogeneity and $\ell$-homogeneity for $k \neq \ell$ is subtler. Consider a set of size $k$ acted on by the trivial group.  Then the group action is $k$-homogeneous but not $j$-homogeneous for any $1 \leq j < k$. On the other hand, when $k$ is less than or equal to half the size of the set acted on, $k$-homogeneity implies both $(k-1)$-transitivity and $(k-1)$-homogeneity \cite{dixon1996permutation,livingstone1965transitivity} and in many cases also $k$-transitivity \cite{kantor1972khomo}. In Section~\ref{sec:Paley}, we will prove that an infinite class at ETFs have symmetry groups which are $2$-homogeneous but not $2$-transitive.

We make a note of a result from \cite{chien2018projective} (cf.\ Proposition~\ref{prop:switch}) that we will leverage to prove a few results and which was also the basis of the code~\cite{JoeySymm} provided by Joey Iverson that was used to test some ideas.
\begin{thm}\cite{chien2018projective}\label{thm:TP}
Let $\Phi = (\varphi_j)_{j=1}^n$ be an ETF.  Then $\sigma \in \Sym_v(\Phi)$ if and only if for all $j \neq k$
\[
\ip{\varphi_j}{\varphi_k} =\ip{\varphi_{\sigma(j)}}{\varphi_{\sigma(k)}}. 
\]
Similarly,  $\sigma \in \Sym_{\ell}(\Phi)$ if and only if for all $j \neq k \neq l \neq j$
\[
\TP(j,k,\ell) = \TP(\sigma(j),\sigma(k),\sigma(\ell)).
\]
\end{thm}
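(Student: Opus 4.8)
The plan is to handle the two equivalences separately, observing in each case that the stated invariance condition on $\sigma$ says exactly that the reindexed frame $\Psi := (\varphi_{\sigma(j)})_{j=1}^n$ is unitarily (respectively switching) equivalent to $\Phi$. The vector case rests on the elementary principle that two finite sequences of vectors spanning $\bF^d$ differ by a single $U \in \U(d)$ if and only if they have identical Gram matrices, while the line case will be routed through Proposition~\ref{prop:switch}.

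For the vector case, the forward direction is immediate: if $U\varphi_j = \varphi_{\sigma(j)}$ for some $U \in \U(d)$, then $\ip{\varphi_{\sigma(j)}}{\varphi_{\sigma(k)}} = \ip{U\varphi_j}{U\varphi_k} = \ip{\varphi_j}{\varphi_k}$ by unitarity. For the converse, I would first note that equiangularity forces $\norm{\varphi_j}^2 = b$ for every $j$, so the hypothesized equality for $j \neq k$ automatically extends to $j=k$, giving $\Phi$ and $\Psi$ the same Gram matrix. I would then define $U$ on $\lspan \Phi = \bF^d$ by $\sum_j c_j \varphi_j \mapsto \sum_j c_j \varphi_{\sigma(j)}$; since $\norm{\sum_j c_j \varphi_j}^2$ depends only on the Gram entries, the common Gram matrix makes this map well defined and norm-preserving, hence it extends to a $U \in \U(d)$ with $U\varphi_j = \varphi_{\sigma(j)}$, i.e.\ $\sigma \in \Sym_v(\Phi)$.

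For the line case, I would first record that a line automorphism satisfying $U\varphi_j\varphi_j^\ast U^\ast = \varphi_{\sigma(j)}\varphi_{\sigma(j)}^\ast$ is equivalent to $U\varphi_j = \eta_j\varphi_{\sigma(j)}$ for some unimodular $\eta_j$, because a rank-one operator $vv^\ast$ determines $v$ up to a phase. The forward direction then follows by phase cancellation: substituting $\varphi_{\sigma(j)} = \bar\eta_j U\varphi_j$ into the three factors of $\TP(\sigma(j),\sigma(k),\sigma(\ell))$ contributes the weight $(\bar\eta_j\eta_k)(\bar\eta_k\eta_\ell)(\bar\eta_\ell\eta_j)=1$, so each index's phase meets its conjugate and $\TP(\sigma(j),\sigma(k),\sigma(\ell)) = \TP(j,k,\ell)$.

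The converse in the line case carries the real content, and I would obtain it directly from Proposition~\ref{prop:switch}. Since $\Psi = (\varphi_{\sigma(j)})_{j=1}^n$ is just a reindexing of $\Phi$, it is again a $(d,n)$-ETF, so Proposition~\ref{prop:switch} applies to the pair $(\Phi,\Psi)$; but with $\psi_j = \varphi_{\sigma(j)}$ the hypothesis $\TP(j,k,\ell)=\TP(\sigma(j),\sigma(k),\sigma(\ell))$ is exactly that proposition's triple-product criterion. Hence $\Phi$ and $\Psi$ are switching equivalent, yielding $U \in \U(d)$ and unimodular $\eta_j$ with $\varphi_{\sigma(j)} = U\varphi_j\eta_j$, which rearranges into a line automorphism realizing $\sigma$. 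The main obstacle is precisely the step encapsulated in Proposition~\ref{prop:switch}: promoting equality of all triple products to a single global unitary together with a coherent choice of local phases, since the phases must be pinned down consistently across overlapping triples. This is exactly why $\TP$, rather than the individual inner products, is the correct invariant in the line setting.
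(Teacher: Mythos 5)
The paper itself offers no proof of this theorem---it is quoted directly from \cite{chien2018projective}---so there is no internal argument to compare against; your proposal must stand on its own, and it does. The vector case is handled correctly: equiangularity gives $\norm{\varphi_j}^2 = b$ for all $j$, so the off-diagonal hypothesis extends to full equality of Gram matrices, and the map $\sum_j c_j\varphi_j \mapsto \sum_j c_j\varphi_{\sigma(j)}$ is well defined and isometric on $\lspan\Phi = \bF^d$, hence unitary. The line case is also correct: the rank-one identity $vv^\ast = ww^\ast \Leftrightarrow v = \eta w$ with $\abs{\eta}=1$ gives the forward direction by phase cancellation around the $3$-cycle, and the converse follows by applying Proposition~\ref{prop:switch} to the pair $(\Phi,\Psi)$ with $\psi_j := \varphi_{\sigma(j)}$, since $\Psi$ is again a $(d,n)$-ETF and your hypothesis is precisely the triple-product criterion there. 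This is essentially the derivation in the cited source as well; as you note, the real content (coherently pinning down phases across overlapping triples) is concentrated in Proposition~\ref{prop:switch}, which the paper likewise states without proof, so invoking it as a black box is consistent with the paper's own level of exposition.
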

Since this result relies solely on the inner products, one can see that the vector symmetry groups (respectively, the line symmetry groups) of an ETF and a Naimark complement of an ETF are the same.

\subsection{The Combinatorial Structure of ETFs}\label{sec:combETF}
Understanding the linear dependencies of vectors in ETFs rose to prominence in the search for so-called deterministic restricted isometry property (RIP) measurements in compressed sensing (see, e.g.,~\cite{bandeira2013road}).  However, it was realized that many ETFs have in some sense worst possible linear dependence properties~\cite{FMT12}.
\begin{defn}
Let $\Phi = (\varphi_j)_{j=1}^n \in \bF^{d\times n}$ for $n > d$. The \emph{spark} (or \emph{girth}) of $\Phi$, $\spark(\Phi)$, is the size of the smallest subset of linearly dependent vectors in $\Phi$.
\end{defn}
``Spark'' is the term from compressed sensing literature \cite{DE03}, while ``girth'' is a more classical term from matroid theory (see, e.g., \cite{Ox11}).   RIP concerns subsets up to a certain size of columns of a matrix having condition number close to $1$; so, small subsets of columns being linearly dependent means that the matrix is far from being RIP.
\begin{ex}
Consider an orthonormal basis $(u_j)_{j=1}^d$ for $\bF^d$. Then
\begin{itemize}
\item $\spark\begin{pmatrix} u_1 & \hdots & u_d & \sum_{j=1}^d u_j \end{pmatrix} = d+1$,
\item $\spark\begin{pmatrix} u_1 & \hdots & u_d & u_1 \end{pmatrix} = 2$, and
\item $\spark\begin{pmatrix} u_1 & \hdots & u_d & 0 \end{pmatrix} = 1$,
\end{itemize}
where the singleton set consisting of the zero vector is counted as a linearly dependent set of size $1$, while the rank of each of the vector collections above is $d$.
\end{ex}

Trivially, the largest that the spark of $n> d$ vectors in $\bF^d$ may be is $d+1$, which happens when any subset of $d$ vectors is linearly independent.  When this happens, we say the vectors are \emph{full spark}.  Full spark frames (respectively, full spark tight frames) of $n$ vectors in $\bF^d$ form an open dense set in the space of frames (respectively, tight frames) of $n$ vectors in $\bF^d$~\cite{CahMS,ACM12,CMS13}.  However, the same cannot be said when restricting to ETFs. Applying a now classical result from the compressed sensing literature~\cite{DE03} to ETFs, we have the following result.
\begin{prop}
Let $\Phi = (\varphi_j)_{j=1}^n$ be a $(d,n)$-ETF with $n> d$.  Then
\[
\spark(\Phi) \geq 1 + \sqrt{\frac{d(n-1)}{n-d}}.
\]
\end{prop}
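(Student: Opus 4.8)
The plan is to combine two ingredients: the classical lower bound on spark in terms of mutual coherence from \cite{DE03}, together with the fact that an ETF meets the Welch bound, so that its coherence can be written explicitly in terms of $d$ and $n$. Recall that the \emph{mutual coherence} of a frame with no zero vectors is $\mu = \max_{j \neq k} \frac{\absip{\varphi_j}{\varphi_k}}{\norm{\varphi_j}\norm{\varphi_k}}$, and that the result of Donoho and Elad guarantees $\spark(\Phi) \geq 1 + 1/\mu$. Since rescaling nonzero vectors changes neither the spark nor the coherence, it suffices to compute $\mu$ for an ETF and substitute.

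First I would read off from the equiangularity condition $\absip{\varphi_j}{\varphi_k} = (b-a)\delta_{j,k} + a$ that every vector has the same norm $\norm{\varphi_j}^2 = b$ and that every off-diagonal absolute inner product equals $a$, so that $\mu = a/b$. The task is then to express $a/b$ through $d$ and $n$, which I would do by examining the Gram matrix $G = \Phi^\ast \Phi$. Because $\Phi$ is tight, $\Phi \Phi^\ast = A I_d$, so $G$ has eigenvalue $A$ with multiplicity $d$ and eigenvalue $0$ with multiplicity $n-d$. Comparing $\tr(G) = nb$ with $\sum_i \lambda_i = dA$ gives $A = nb/d$, while comparing $\tr(G^2) = \sum_{j,k} \absip{\varphi_j}{\varphi_k}^2 = n b^2 + n(n-1) a^2$ with $\sum_i \lambda_i^2 = dA^2$ yields, after substituting $A = nb/d$ and dividing by $n$, the identity $b^2 + (n-1)a^2 = n b^2/d$. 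Solving for $a^2/b^2$ produces $\mu^2 = \frac{n-d}{d(n-1)}$, i.e.\ the Welch bound is attained with equality.

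Finally, I would substitute $1/\mu = \sqrt{\frac{d(n-1)}{n-d}}$ into the Donoho--Elad inequality to obtain the claimed bound. The only genuinely computational step is the pair of trace identities for $G$ and $G^2$, and I expect no real obstacle there, since the spectrum of $G$ is forced by tightness and the entrywise sums are immediate from equiangularity. The main conceptual point is simply recognizing that an ETF is exactly the equality case of the Welch bound, which is what converts the generic Donoho--Elad bound into the stated closed form.
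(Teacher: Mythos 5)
Your proposal is correct and is exactly the argument the paper intends: the paper states this proposition as a direct application of the Donoho--Elad bound $\spark(\Phi) \geq 1 + 1/\mu$ from \cite{DE03}, with the coherence of an ETF given by equality in the Welch bound. Your Gram-matrix trace computation simply fills in the standard details that the paper leaves to the citation, so there is no substantive difference in approach.
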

The infinite class of ETFs constructed in~\cite{FMT12} all saturate this worst-case bound for spark.  Inspired by this, the author of this paper and coauthors characterized all ETFs which saturate this bound~\cite{FJKM17}.
\begin{thm}
Let $\Phi = (\varphi_j)_{j=1}^n$ be a $(d,n)$-ETF with $n> d$. Then 
\[
\spark(\Phi) = 1 + \sqrt{\frac{d(n-1)}{n-d}}
\]
if and only if $\Phi$ contains a subset of vectors which form a simplex ETF for their span.
\end{thm}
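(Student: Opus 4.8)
The plan is to analyze the Gram matrix of a minimal linearly dependent subset and exploit the equality case of the triangle-inequality (Gershgorin) estimate that underlies the spark lower bound in the preceding proposition. Throughout I normalize $\Phi$ to unit-norm vectors, which changes neither $\spark(\Phi)$, nor the spanned lines, nor the property of a subset being a simplex ETF for its span. With this normalization the common off-diagonal modulus is the Welch value $\mu = \sqrt{(n-d)/(d(n-1))}$, so the claimed threshold is exactly $1 + 1/\mu$, and in particular $\mu(m-1)=1$ when $m := 1 + 1/\mu$.

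For the ``if'' direction I would argue directly. Suppose a subset $S \subseteq \{\varphi_j\}$ forms a simplex ETF for its span, with $\dim \lspan(S) = r$, so that $|S| = r+1$ and the vectors of $S$ are linearly dependent. As an $(r,r+1)$-ETF, $S$ has coherence $1/r$; but the inner products among vectors of $S$ are inner products of $\Phi$, hence of modulus $\mu$. Thus $\mu = 1/r$ and $|S| = r+1 = 1 + 1/\mu$. A linearly dependent subset of this size gives $\spark(\Phi) \leq 1 + 1/\mu$, which together with the lower bound of the preceding proposition yields equality.

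For the ``only if'' direction, assume $\spark(\Phi) = m = 1 + 1/\mu$ and fix a linearly dependent subset $S$ of size $m$; by minimality every proper subset of $S$ is linearly independent. Let $G$ be the $m\times m$ Gram matrix of $S$, with $G_{ij} = \ip{\varphi_i}{\varphi_j}$ (ones on the diagonal, off-diagonal entries of modulus $\mu$), and pick $x \neq 0$ with $Gx = 0$. Minimality forces $x$ to have full support. Rewriting the $i$th equation as $\sum_{j\neq i}\ip{\varphi_i}{\varphi_j}x_j = -x_i$ and taking moduli gives $|x_i| \leq \mu \sum_{j\neq i}|x_j|$; choosing $i$ to maximize $|x_i|$ and using $\mu(m-1)=1$ forces all $|x_j|$ equal, which I normalize to $1$. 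Saturation of the triangle inequality in each row then forces every term $\ip{\varphi_i}{\varphi_j}x_j$ to share the phase of $-x_i$ and have modulus $\mu$, so $\ip{\varphi_i}{\varphi_j}x_j = -\mu x_i$, i.e.\ $\ip{\varphi_i}{\varphi_j} = -\mu\, x_i \bar{x}_j$ for $i \neq j$. Rescaling by the unimodular phases of $x$, set $\psi_j = \bar{x}_j \varphi_j$; this is switching equivalent to $S$ and spans the same lines, and a short computation gives $\ip{\psi_i}{\psi_j} = -\mu$ for $i\neq j$ with $\norm{\psi_j}=1$. Hence the Gram matrix of $\{\psi_j\}$ is $(1+\mu)I - \mu J$ (with $J$ the all-ones matrix), whose spectrum is $0$ on $\mathbbm{1}$ and $1+\mu$ with multiplicity $m-1$; since $\mu = 1/(m-1)$, the family $\{\psi_j\}$ is an equal-norm, equiangular, tight (constant nonzero eigenvalue) set of $m$ vectors spanning an $(m-1)$-dimensional space, i.e.\ a regular simplex ETF for its span, and therefore so is $S$.

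I expect the main obstacle to be the ``only if'' direction, specifically the two nested equality analyses: first forcing $|x_j|$ to be constant via the maximal-coordinate estimate, and then extracting the exact relation $\ip{\varphi_i}{\varphi_j} = -\mu\, x_i \bar{x}_j$ from saturation of the triangle inequality in every row. The remaining point requiring care is verifying that the rescaled Gram matrix $(1+\mu)I - \mu J$ is genuinely that of a simplex ETF, which amounts to reading off tightness from its spectrum.
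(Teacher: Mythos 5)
Your proof is correct. Note that the paper does not prove this theorem itself---it imports it from \cite{FJKM17}---and your argument (the Donoho--Elad coherence bound together with the equality-case analysis of the Gram matrix of a minimal dependent set, forcing a full-support null vector with constant-modulus entries and then the phase structure $\ip{\varphi_i}{\varphi_j} = -\mu\, x_i \bar{x}_j$, so that the set is switching equivalent to a regular simplex) is essentially the standard proof given in that reference; the only wording caveat is that $S$ itself is then a simplex ETF for its span, not necessarily a \emph{regular} one, which is exactly what the statement requires.
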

This led to the following definition \cite{FJKM17}.
\begin{defn}
Let $\Phi = (\varphi_j)_{j=1}^n$ be an ETF. The \emph{binder} of $\Phi$ is the set of subsets of $\Phi$ (or their corresponding indices) which form simplex ETFs for their span.
\end{defn}
If the binder is non-empty, then each of the elements of the binder (which are subsets of $\Phi$) have the same size. In certain cases (e.g., Gabor-Steiner frames~\cite{BoKi18}), the binder forms a type of combinatorial design call a BIBD. For more on combinatorial designs see, e.g.,~\cite{Handbook}.
\begin{defn}
A \emph{$t$-$(v,k,\lambda)$-design} or \emph{block design} is an ordered pair $(V,\mathcal B)$, where $V$ is a set of size $v$ and $\cB$ is a collection of subsets of $V$ called \emph{blocks} such that each block $\beta \in \mathcal B$ has the same size $k$ and any subset of $t$ distinct elements in $V$ is contained in precisely $\lambda>0$ blocks.  When $t=2$, the design is also called a \emph{balanced incomplete block design (BIBD)}.
\end{defn}
Note that $t$-designs are also $s$-designs for any $1 \leq s \leq t$.
\begin{ex}\label{ex:DSBIBD}
Consider the following subsets of $\bZ_7$ which result from shifting the difference set in Example~\ref{ex:paleyDS1} by each element of the group:
\[
\left\{ \{1,2,4\}, \,  \{2,3,5\} ,\,  \{3,4,6\}, \,  \{0,4,5\} ,\,  \{1,5,6\}, \,  \{0,2,6\}, \,  \{0,1,3\} \right\}.
\]
Note that $\{1,2\}$ is only in the first block, $\{1,3\}$ only in the seventh, $\{1,4\}$ only in the first, and so on. This is a $2$-$(7,3,1)$-design a.k.a.\ a $(7,3,1)$-BIBD.  
\end{ex}
In practice we find that many ETFs have an empty binder. This does not mean that the ETF is full spark, only that it does not have worst case spark.  So, one may ask to study the properties of the smallest dependent sets.  This leads us to introduce a new definition.
\begin{defn}\label{defn:bnder}
Let $\Phi$ be an ETF with spark $s$. The \emph{bender}\footnote{``Bender'' is in honor of the smack-talking robot on Futurama, a show which is filled with math jokes.} of $\Phi$ is the set of subsets of $\Phi$ (or their corresponding indices) which are size $s$ and linearly dependent.  The elements of the bender are called \emph{short circuits}.
\end{defn}
In matroid theory, minimally dependent subsets of vectors, i.e., those for which any proper subset is independent, are called \emph{circuits}.  The size of the smallest circuit(s) is the girth, i.e., the spark. Hence the name ``short circuit.''\footnote{Thanks to Matt Fickus, who suggested this name.}   

If $\Phi$ has a nontrivial binder, then the bender is the binder and the short circuits are all simplex ETFs for their span.  On the other extreme, if $\Phi$ is full spark $d+1$, then the bender is the set of all subsets of size $d+1$ of the vectors.

It was noted in  \cite{fickus2023doubly} that the binder of an ETF with a $2$-transitive line automorphism group is a BIBD.  However, folklore in combinatorial design theory (see, e.g., \cite[Remark 4.29]{Handbook}) means a much more general result holds.
\begin{prop}\label{prop:benderdesign} 
Let $\Phi$ be an ETF with $k$-homogeneous line symmetry group and spark $\geq k$.  Then the bender forms a $k$-design.
\end{prop}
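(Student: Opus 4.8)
The plan is to show directly that $(\{1,\dots,n\}, \cB)$ is a $k$-$(v,s,\lambda)$-design, where $v=n$, the block size is $s = \spark(\Phi)$, and $\cB$ is the bender. Three things must be verified from the definition of a $t$-design: that all blocks have the same size (immediate, since every short circuit has size exactly $s$ by definition of the bender), that the number of blocks through any fixed $k$-subset is a constant $\lambda$, and that this constant is positive. The last point is where the hypothesis $\spark(\Phi) \geq k$ enters, as I explain below.

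First I would set up the group action. Let $G = \Sym_{\ell}(\Phi) \leq S_n$, which is $k$-homogeneous by hypothesis. The key observation is that $G$ permutes the bender: if $C = \{i_1,\dots,i_s\}$ is a short circuit and $\sigma \in G$, then $\sigma(C)$ is again a short circuit. This is because a short circuit is precisely a linearly dependent subset of minimum size, and a line automorphism (realized by some $U \in \U(d)$ acting as $U\varphi_j\varphi_j^\ast U^\ast = \varphi_{\sigma(j)}\varphi_{\sigma(j)}^\ast$) preserves linear dependence of the underlying lines: one has $\varphi_{\sigma(j)} = U\varphi_j \eta_j$ for suitable unimodular scalars $\eta_j$, and multiplying each vector by a unit scalar and applying the invertible $U$ neither creates nor destroys linear dependencies, nor changes the size of a dependent set. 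Hence $G$ acts on $\cB$ by $\sigma \cdot C = \sigma(C)$, and simultaneously $G$ acts on the collection $\binom{[n]}{k}$ of $k$-subsets of indices.

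The heart of the argument is a standard double-counting / orbit averaging step. Fix any two $k$-subsets $T, T' \subseteq [n]$; by $k$-homogeneity there is $g \in G$ with $g(T) = T'$. Since $g$ is a bijection of $\cB$, it restricts to a bijection between $\{C \in \cB : T \subseteq C\}$ and $\{C \in \cB : T' \subseteq C\}$. Therefore the count $\lambda_T := |\{C \in \cB : T \subseteq C\}|$ is independent of $T$; call the common value $\lambda$. This is exactly the constancy required of a $k$-design. The main obstacle, and the only place real care is needed, is establishing $\lambda > 0$, i.e.\ that \emph{some} short circuit actually contains a given $k$-subset. Here the hypothesis $s = \spark(\Phi) \geq k$ is essential: I would argue that the bender is nonempty (a minimum dependent set always exists once $n > d$, so $\cB \neq \emptyset$), pick any short circuit $C_0$, and then use $k$-homogeneity once more. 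Since $|C_0| = s \geq k$, the set $C_0$ contains at least one $k$-subset $T_0$; given an arbitrary target $k$-set $T$, $k$-homogeneity supplies $g\in G$ sending $T_0$ to $T$, whence $g(C_0)$ is a short circuit containing $T$. Thus $\lambda \geq 1 > 0$, completing the verification that the bender is a $k$-$(n,s,\lambda)$-design.

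I expect the subtlety to lie entirely in the $\lambda>0$ step and in cleanly justifying that line automorphisms preserve short circuits; the constancy of $\lambda$ is a one-line consequence of homogeneity once those are in place. I note that the condition $\spark(\Phi)\geq k$ cannot be dropped: if $s < k$ then no block of size $s$ can contain a $k$-subset, so every $\lambda_T = 0$ and the positivity axiom of a design fails vacuously.
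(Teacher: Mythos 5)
Your proposal is correct and takes essentially the same route as the paper's proof: line automorphisms preserve linear dependencies (unitaries and unimodular scalings neither create nor destroy them), so the line symmetry group permutes the short circuits, and $k$-homogeneity then carries the set of blocks through any one $k$-subset bijectively onto the set of blocks through any other, making the count constant. The only place you go beyond the paper is in explicitly verifying $\lambda > 0$ --- using $\spark(\Phi) \geq k$ to find a $k$-subset inside some short circuit and transporting it by homogeneity --- a step the paper leaves implicit even though that hypothesis is exactly what makes it work.
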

\begin{proof}
Note that neither scaling vectors by unimodular scalars nor performing a unitary change of basis affects linear dependency of a set. So, we will consider the short circuits to consist of the sets of the lines spanned by each of the vectors, and, thus, acting on a short circuit with a line automorphism results in a short circuit.  For any two different sets $X_1$ and $X_2$ of $k$ frame vectors, there is a least one element of the line symmetry group mapping $X_1$ to $X_2$, meaning that the size of the set of short circuits containing $X_1$ is the same as the size of the set of short circuits containing $X_2$.  Thus, the bender is a $k$-design.
\end{proof}

\section{Paley ETFs}\label{sec:Paley}
We will apply the tools from Sections~\ref{sec:geomETF} and~\ref{sec:combETF} to combinatorially and geometrically characterize one infinite class of ETFs, the Paley (difference set) ETFs.  These ETFs are generated using the structure of finite fields.  Thus, we begin in Section~\ref{sec:ff} by reviewing some basic facts about finite fields. For each prime power $q \equiv 3 \bmod 4$, there is a Paley ETF $\Phi_q$.  We will prove in Sections~\ref{sec:paleyprim}--\ref{sec:primepow} that $\Sym_v(\Phi_q)=\Sym_{\ell}(\Phi_q)$ act $2$-homogeneously but not $2$-transitively on the vectors and the lines, respectively.  The Paley ETFs thus have a bender which is a BIBD.  In Section~\ref{sec:paleyprim}, we prove the result (Theorem~\ref{thm:Paleydbhm}) when $q$ is prime as the proof requires less machinery and gives a flavor of the arguments for the more general case. This result also explicitly gives the unitary matrices which are vector automorphisms.  Then in Section~\ref{sec:altproof}, we provide an alternate proof of Theorem~\ref{thm:Paleydbhm} that filters through Theorem~\ref{thm:TP}. This proof does not yield the explicit vector automorphisms but yields techniques which may be useful for other classes of ETFs. A quick example of the technique being applied to another infinite class of ETFs (Gabor-Steiner) is also given. Finally, in Section~\ref{sec:primepow}, we prove the result (Theorem~\ref{thm:Paleydbhm2}) for all prime powers $q \equiv 3 \bmod 4$.

\subsection{Finite Fields}\label{sec:ff}
We begin by reviewing some basic facts about finite fields (see, e.g.,~\cite{DummF}). If $q = p^s$ for some prime $p$ and $\bF_q$ is the finite field of size $q$, then $\bF_q^\times = \bF_q\backslash \{0\}$ is a cyclic group under multiplication, where a generator of $\bF_q^\times$ is called a \emph{primitive element}.  We denote the quadratic residues and non-residues as
\begin{align*}
\QR(\bF_q) &= \set{x^2}{x \in \bF_q^\times} \\
\NR(\bF_q) &= \bF_q^\times \backslash \QR.
\end{align*}
Thus, $\bF_q = \QR(\bF_q) \cup \NR(\bF_q) \cup \{0\}$.  When the underlying field is clear from context, we will often simply write $\QR$ and $\NR$.  Note that $\QR$ is thus a subgroup of the multiplicative cyclic group $\bF_q^\times$ of index $2$, meaning it has size $(q-1)/2$ and is also cyclic and generated by a square of a primitive element.  
\begin{ex}
We have that $\QR(\bF_7)= \{ 1, 2, 4\}$ since in $\bF_7$
\[
1^1 = 1, \quad 2^2 = 4, \quad 3^2 = 2, \quad 4^2 = 2, \quad 5^2 = 4, \quad 6^2 = 1,
\]
i.e., precisely the difference set in $\bZ_7$ (i.e., the additive group of $\bF_7$) in Example~\ref{ex:paleyDS1}.  Note further that $3$ and $5$ are primitive elements, i.e., $<3>=<5> =\bF_7^\times$ and that $2=3^2$ and $4=5^2$ generate $\QR(\bF_7)$.
\end{ex}

For fields of prime order $p$, the structure is relatively simple.  The additive group $(\bF_p,+)$ and the multiplicative group $(\bF_p^\times, \cdot)$ are both cyclic.  For $q=p^s$ with $s >1$, the additive group $(\bF_q,+)$ is no longer cyclic. Further, we can think of $\bF_q$ as both a field extension of $\bF_p$ and a vector space over $\bF_p$.  We will need to leverage both types of structure in our analysis.

First, we represent $\bF_q$ as a field extension. We pick a monic irreducible (and thus separable since $\bF_p$ is a finite field) polynomial $\pi\in \bF_p[X]$ of degree $s$. Then $\bF_q$ is the Galois extension $\bF_q \cong \bF_p(\alpha) \cong \bF_p[X]/(\pi(X))$, where $\pi(\alpha) = 0$.  Since $\bF_q$ is also congruent to the splitting field of $X^q -X$, we can (and will) choose $\pi$ so that $\alpha$ is a primitive element and $\alpha^2$ generates $\QR$.

We may view the vector space structure of $\bF_q$ as coordinate vectors in $\bF_p^s$ via the \emph{primitive element basis} $\alpha^0, \, \alpha^1, \, \hdots \alpha^{s-1}$.    The \emph{Frobenius automorphism} $x \mapsto x^p$ generates the Galois group of automorphisms. A basis which is the orbit of an element under the Galois group, i.e., $\set{x^{p^i}}{i \in \{0, 1, \hdots, s-1\}}$ for particular $x \in \bF_q$ is called a \emph{normal basis}.

\begin{ex}\label{ex:333a}
Consider $\pi(x) =x^3+2x+1 \in \bF_3[X]$. Then $\pi$ is irreducible over $\bF_3$ since $0^3+2(0)+1, 1^3 + 2(1) +1, 2^3 + 2(2) +1 \neq 0$.  Let $\alpha$ denote a root of $\pi$.  Then
\[
\begin{array}{ccccc}
\alpha^0 =1, &\alpha^1 = \alpha, & \alpha^2 = \alpha^2, & \alpha^3 = 2+ \alpha , &\alpha^4 = 2 \alpha+\alpha^2, \\
\alpha^5 =2+\alpha+ 2\alpha^2,&\alpha^6 =1 + \alpha+ \alpha^2, &\alpha^7 =2 + 2 \alpha + \alpha^2, & \alpha^8 = 2+ 2\alpha^2,& \alpha^9 = 1+ \alpha,  \\
\alpha^{10} = \alpha + \alpha^2, & \alpha^{11} = 2+ \alpha + \alpha^2, & \alpha^{12} = 2 + \alpha^2, & \alpha^{13} = 2, & \alpha^{14} = 2\alpha, \, \hdots \,
\end{array}
\]
where $\alpha^{13+k} = 2\alpha^k$ for $k \in \{ 0, \hdots, 13\}$.  That is, $\alpha$ is a primitive element, $\{\alpha^0, \alpha^1, \alpha^2\}$ is a primitive element basis for $\bF_{27} \cong \bF_3(\alpha)$, and $\QR(\bF_{27}) = <\alpha^2>$. We also note that the orbit of $\alpha^2$ under the Galois group forms a normal basis. Namely, 
\[
\alpha^2, \quad \left(\alpha^2\right)^3 = \alpha^6 =1 + \alpha+ \alpha^2, \quad \left( \alpha^2\right)^9 = \alpha^{18} = 2\alpha^5 = 1 + 2\alpha + \alpha^2
\]
form a basis for $\bF_{27}$.  One way to see this is by inspecting the coordinate vectors with respect to the primitive element basis:
\[
\begin{pmatrix} 0 & 0 & 1 \end{pmatrix}^\top, \quad \begin{pmatrix} 1 & 1 & 1\end{pmatrix}^\top, \quad \begin{pmatrix}  1 & 2 & 1 \end{pmatrix}^\top.
\]
\end{ex}

We denote the vector space of $s \times s$ matrices with entries in a field $\bF$ as $\bF^{s \times s}$.  Let $C \in \bF_p^{s \times s}$ be the companion matrix of $\pi$.  Then the group generated by $C$ is isomorphic to $\bF_q^\times$ via the map $C \mapsto \alpha$. 

\begin{ex}\label{ex:333b}
Consider the construction of $\bF_{27}$ from Example~\ref{ex:333a}. The companion matrix of $\pi$ in $\bF_3^{3 \times 3}$ is
\[
C=\begin{pmatrix} 0 & 0 & 2 \\ 1 & 0 & 1 \\ 0 & 1 & 0\end{pmatrix}.
\]
Let $e_1$ be the first standard basis vector (i.e., $e_1$ represents the multiplicative identity in $\bF_{27}$).  Then, $C e_1$ represents $\alpha$ using coordinates from the primitive basis $\{1, \alpha, \alpha^2\}$ and further
\[
\bF_{27}^\times = \set{C^k e_1}{k \in \{0, \hdots, 26\}} \quad \textrm{and} \quad \QR(\bF_{27}) = \set{C^{2k} e_1}{k \in \{0, \hdots, 12\}}.
\]
Also, $\set{C^{2k} e_1}{k \in \{1, 3, 9\}}$ forms a normal basis for $\bF_{27}$.
\end{ex}

When $q \equiv 3 \bmod 4$, $\NR = -\QR$; that is, the quadratic non-residues are precisely the negations of the quadratic residues. This fact implies the following (see, e.g.~\cite{Handbook}).
\begin{thm} \label{thm:paleydiff}
When $q \equiv 3 \bmod 4$, $\QR(\bF_q)$ forms a $(q,(q-1)/2,(q-3)/4)$-difference set in the additive group of $\bF_q$.
\end{thm}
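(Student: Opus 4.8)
The plan is to count, for each nonzero $g \in \bF_q$, the number $\lambda(g)$ of ordered pairs $(x,y)$ with $x,y \in \QR(\bF_q)$ and $x - y = g$, and then to show that this count is the same for every $g$. First I would record the size of the set: since $\QR(\bF_q)$ is the index-$2$ subgroup of $\bF_q^\times$, we have $\abs{\QR(\bF_q)} = (q-1)/2$, as already noted in the excerpt. So the total number of ordered pairs of distinct residues is $\frac{q-1}{2}\cdot\frac{q-3}{2}$, and the whole task is to see that this splits evenly across the $q-1$ nonzero values of $g$.

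The first symmetry I would exploit is multiplicative. For any $c \in \QR(\bF_q)$, multiplication by $c$ is a bijection of $\bF_q$ carrying $\QR(\bF_q)$ onto itself (because $\QR(\bF_q)$ is a multiplicative group) and sending a pair $(x,y)$ with $x - y = g$ to $(cx, cy)$ with $cx - cy = cg$. Hence $\lambda(g) = \lambda(cg)$ for all $c \in \QR(\bF_q)$, so $\lambda$ is constant on each coset of $\QR(\bF_q)$ in $\bF_q^\times$. Since there are exactly two such cosets, $\QR(\bF_q)$ and $\NR(\bF_q)$, the function $\lambda$ takes at most two values, say $\lambda_{\QR}$ on residues and $\lambda_{\NR}$ on non-residues.

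The second symmetry is transposition: swapping $x$ and $y$ gives $\lambda(g) = \lambda(-g)$ for every $g$. This is the step where the hypothesis $q \equiv 3 \bmod 4$ enters, in the form stated just above the theorem, namely $\NR(\bF_q) = -\QR(\bF_q)$, equivalently $-1 \in \NR(\bF_q)$. Consequently, if $g \in \QR(\bF_q)$ then $-g \in \NR(\bF_q)$, so $\lambda_{\QR} = \lambda(g) = \lambda(-g) = \lambda_{\NR}$, and the two values coincide. Thus $\lambda(g) = \lambda$ is a single constant over all nonzero $g$.

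Finally I would pin down the constant by a global count: summing over all nonzero $g$ tallies every ordered pair of distinct residues exactly once, so
\[
(q-1)\,\lambda = \sum_{g \neq 0} \lambda(g) = \abs{\QR(\bF_q)}\bigl(\abs{\QR(\bF_q)}-1\bigr) = \frac{q-1}{2}\cdot\frac{q-3}{2},
\]
whence $\lambda = (q-3)/4$, which is exactly the claim. The step I would flag as the crux, rather than a genuine obstacle, is the appeal to $-1 \in \NR(\bF_q)$ to merge the two cosets: for $q \equiv 1 \bmod 4$ this fails, the values $\lambda_{\QR}$ and $\lambda_{\NR}$ genuinely differ (one gets a conference-matrix/Paley-graph structure instead), and $\QR(\bF_q)$ is not a difference set, so the congruence hypothesis is doing essential work here.
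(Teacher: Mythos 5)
Your proof is correct. Note that the paper itself supplies no argument for this theorem: it simply records the fact $\NR(\bF_q) = -\QR(\bF_q)$ and cites the design-theory literature (\cite{Handbook}), asserting that this fact ``implies'' the result. Your write-up is the standard double-counting proof that fills in exactly that implication, and it correctly isolates the same fact as the crux: the multiplicative symmetry ($\lambda(g)=\lambda(cg)$ for $c\in\QR$, so $\lambda$ is constant on the two cosets of $\QR$ in $\bF_q^\times$), the transposition symmetry ($\lambda(g)=\lambda(-g)$), and then $-1\in\NR$ to merge the two coset values, followed by the global count
\[
(q-1)\,\lambda = \abs{\QR(\bF_q)}\bigl(\abs{\QR(\bF_q)}-1\bigr) = \tfrac{q-1}{2}\cdot\tfrac{q-3}{2},
\]
which pins down $\lambda=(q-3)/4$. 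All steps check out, including integrality of $(q-3)/4$ under the hypothesis $q\equiv 3 \bmod 4$, and your closing remark about the $q\equiv 1 \bmod 4$ case (where the two coset counts differ and one obtains the Paley strongly regular graph rather than a difference set) is also accurate. In short: the paper outsources this proof, and yours is a complete, self-contained version of the intended argument.
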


Thus, it follows from Theorem~\ref{thm:diffsetcon} that if $q \equiv 3 \bmod 4$, $F$ is the Fourier transform of the additive group of $\bF_q$, and $\Phi$ is the $(q-1)/2 \times q$ submatrix of  $F$ consisting of the rows indexed by $\QR(\bF_q)$, then $\Phi$ must be an ETF, which we call a \emph{Paley ETF}. If $q \equiv 3 \bmod 4$, we will denote by $\Phi_q$ this corresponding ETF.  We will fix an ordering of the rows of $\Phi_q$ as $\alpha^0, \alpha^2, \hdots, \alpha^{q-3}$ for a primitive element $\alpha$.  This will allow the explicit definition of matrices in $\U((q-1)/2)$ which are the vector automorphisms.  The ordering of the columns is less important than the parameterization, as we just need to characterize the induced permutations of $\bF_q$.

We note that some authors (see, e.g., \cite{bandeira2013road}) use the terminology ``Paley ETF'' to refer to $(d,2d)$-ETFs which are also constructed leveraging number theory about quadratic residues.

\begin{ex}\label{ex:paley7}
The smallest nontrivial Paley ETF is $\Phi_7$.  Namely, let $\zeta$ be a primitive $7$th root of unity.  Then
\[
\Phi_7 = \begin{pmatrix} 1 & \zeta & \zeta^2 & \zeta^3 & \zeta^4 & \zeta^5 & \zeta^6\\1 & \zeta^2 & \zeta^4 & \zeta^6 & \zeta & \zeta^3 & \zeta^5\\1 & \zeta^4 & \zeta & \zeta^5 & \zeta^2 & \zeta^6 & \zeta^3\end{pmatrix}
\]
is one ordering of the $(3,7)$-ETF which is the Paley ETF for $\bF_7$.  Note that $\QR(\bF_7)= \{ 1, 2, 4\}$ and thus $\NR(\bF_7) = \{ 3, 5, 6\}$.  Further, other than the $0$th column, half of the columns have exponents that are $\QR$ and half have $\NR$.  Finally, note that (cf.\ Example~\ref{ex:fourorbit})
\[
\Phi_7 = \set{M^k\mathbbm{1}}{k \in \{ 0, \,\hdots,\, 6\}}, \quad \textrm{where} \quad M= \begin{pmatrix} \zeta^{2^0} & 0 & 0 \\ 0 & \zeta^{2^1} & 0 \\ 0 & 0 & \zeta^{2^2}\end{pmatrix} \quad \mathbbm{1} = \begin{pmatrix} 1 \\ 1 \\1 \end{pmatrix}.
\]
The ordering of the rows in this example corresponds to the choice of primitive element $3$, i.e., 
\[
\left( 3^{2(0)}, \, 3^{2(1)},\, 3^{2(2)} \right)=  \left( 2^{0}, \, 2^{1},\, 2^{2} \right).
\]
\end{ex}

Consider the following bilinear form on $\bF_p^s$:
\[
\ip{\begin{pmatrix} a_0 \\ a_1 \\ \vdots \\ a_{s-1} \end{pmatrix}}{\begin{pmatrix} b_0 \\ b_1 \\ \vdots \\ b_{s-1} \end{pmatrix}}_p =  \sum_{i=0}^{s-1} a_i b_i.
\]
Let $p$ be prime and $s\geq 0$.  Further let $\zeta$ be a primitive $p$th root of unity. Up to possible reorderings of the rows and columns, the Fourier matrix of $(\bF_p^s, +)$ has entries $( \zeta^{\ip{x}{y}_p})_{x,y \in \bF_p^s}$. This follows immediately from the Kronecker product construction in Definition~\ref{defn:Fourmat}.

\begin{ex}\label{ex:333c}
Continuing Examples~\ref{ex:333a} and~\ref{ex:333b}, let $\zeta$ be a primitive third root of unity so that the Paley ETF $\Phi_{27}$ has entries 
\[
\set{\zeta^{\ip{x}{y}_p}}{x \in \QR,\, y \in \bF_{27}}.
\]
Since $Ce_1$ is a primitive element and $C^2e_1$ generates the quadratic residues, we may parameterize each column except the one indexed by $y=0$ (which has all entries equal to $1$) as
\[
\set{\zeta^{\ip{x}{y}_p}}{x \in \QR,\, y \in \bF_{27}^\times}= \set{\zeta^{\ip{C^{2k}e_1}{C^{\ell} e_1}_p}}{k \in \{0, \hdots, 12\},\, \ell \in \{0, \hdots, 25\}}. 
\]
\end{ex}

\subsection{Paley ETFs over Fields of Prime Order}\label{sec:paleyprim}
In this section, we will explicitly give the vector automorphisms of $\Phi_p$ for $p \equiv 3 \bmod 4$ prime and show that the vector and line symmetry groups act $2$-homogeneously but not $2$-transitively.  The line automorphism group is generated by the following operators.
\begin{defn}\label{defn:opers}
Let $p>3$ be a prime with $p \equiv 3 \bmod 4$ and $\zeta$ be a primitive $p$th root of unity.  Further let $\alpha$ be a primitive element of $\bF_p$. The \emph{(cyclic) Paley modulation} $M_{\{p\}}$ is the $(p-1)/2 \times (p-1)/2$ diagonal matrix 
\[
M_{\{p\}}= \begin{pmatrix} \zeta^{\alpha^0} & 0 & \hdots & 0 \\ 0 & \zeta^{\alpha^2}& \hdots  & 0 \\\vdots & \vdots & \ddots & \vdots \\  0 & 0 & \hdots& \zeta^{\alpha^{p-3}}\end{pmatrix}. 
\]
For any $d \in \bN$, the $d \times d$ \emph{(cyclic) translation matrix} $T_d$ is
\[
T_d = \begin{pmatrix} 0 & 0 & \hdots &0 & 1 \\ 1 & 0 & \hdots & 0& 0 \\ 0 & 1 & \hdots & 0& 0 \\ \vdots & \vdots & \ddots & \vdots& \vdots \\ 0 & 0 & \hdots & 1& 0 \end{pmatrix}.
\]
\end{defn}
For a given $p$, the sizes of interest will be $M_{\{p\}}$ and $T_{(p-1)/2}$; thus, when clear from the context, we will often simply write $M$ and $T$. As in Example~\ref{ex:paley7}, we may characterize Paley ETFs over prime-order fields as the orbit under the cyclic group of Paley modulations.
\begin{prop}\label{prop:palorbit}
Let $p>3$ be a prime with $p \equiv 3 \bmod 4$ and $\zeta$ be a primitive $p$th root of unity.  Then the Paley ETF $\Phi_p$ may be constructed as
\[
\Phi_p = \set{M_{\{p\}}^k\mathbbm{1}}{k \in \{ 0, \,\hdots,\, p-1\}}.
\]
\end{prop}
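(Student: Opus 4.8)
The plan is to verify the claimed set equality by a direct entrywise computation, matching each vector $M_{\{p\}}^k\mathbbm{1}$ to a column of $\Phi_p$. First I would record the entries of the columns of $\Phi_p$ in the fixed row ordering: since the rows are indexed by $\QR(\bF_p) = \{\alpha^0, \alpha^2, \ldots, \alpha^{p-3}\}$ and the Fourier matrix of $(\bF_p,+)$ has $(x,y)$-entry $\zeta^{xy}$, the column indexed by $y \in \bF_p$ is the vector $\varphi_y$ whose $i$th entry, for $i = 0, 1, \ldots, (p-3)/2$, equals $\zeta^{\alpha^{2i} y}$.

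Next I would compute $M_{\{p\}}^k\mathbbm{1}$ directly. As $M_{\{p\}}$ is diagonal with $i$th diagonal entry $\zeta^{\alpha^{2i}}$, its $k$th power is diagonal with $i$th entry $\bigl(\zeta^{\alpha^{2i}}\bigr)^k = \zeta^{k\alpha^{2i}}$, and multiplying by $\mathbbm{1}$ merely reads off this diagonal. Hence the $i$th entry of $M_{\{p\}}^k\mathbbm{1}$ is $\zeta^{\alpha^{2i} k}$, which is precisely the $i$th entry of $\varphi_k$; reducing the integer exponent modulo $p$ is harmless since $\zeta^p = 1$. Therefore $M_{\{p\}}^k\mathbbm{1} = \varphi_k$ for each $k$.

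Finally I would conclude the set equality: as $k$ runs over $\{0, 1, \ldots, p-1\}$, the column index $k$ runs over all of $\bF_p$, so $\set{M_{\{p\}}^k\mathbbm{1}}{k \in \{0, \ldots, p-1\}}$ is exactly the collection of all $p$ columns of $\Phi_p$, which is the asserted identity.

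The computation is pure bookkeeping and presents no genuine obstacle; the one point needing care is aligning the fixed row ordering $\alpha^0, \alpha^2, \ldots, \alpha^{p-3}$ with the diagonal of $M_{\{p\}}$ so that the exponents match without any permutation. Conceptually this is the Paley specialization of the orbit description in Example~\ref{ex:fourorbit}: the diagonal matrices $M^{\{j\}}$ appearing there form a group isomorphic to $(\bF_p,+) \cong \bZ_p$, and since $1$ generates this cyclic group and $M_{\{p\}}$ coincides with $M^{\{1\}}$, the powers of the single generator $M_{\{p\}}$ recover the whole orbit, exactly as in the concrete case $p=7$ of Example~\ref{ex:paley7}.
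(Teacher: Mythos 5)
Your proof is correct and takes essentially the same route as the paper's: the paper's proof simply cites Example~\ref{ex:fourorbit} together with the fact that $\QR(\bF_p)=\set{\alpha^{2\ell}}{\ell \in \{0,\hdots,(p-3)/2\}}$, and your entrywise verification that $M_{\{p\}}^k\mathbbm{1}=\varphi_k$ is exactly the computation underlying that citation, with your closing remark (that $M_{\{p\}}=M^{\{1\}}$ generates the modulation group since $1$ generates $\bZ_p$) making the specialization explicit. The one point of care you flag---matching the fixed row ordering $\alpha^0,\alpha^2,\hdots,\alpha^{p-3}$ to the diagonal of $M_{\{p\}}$---is indeed the only place where the argument could go wrong, and you handle it correctly.
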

\begin{proof}
Let $\alpha$ be the primitive element of $\bF_p$ used to construct $M_{\{p\}}$. The result trivially follows from Example~\ref{ex:fourorbit} and the fact that 
\[
\QR(\bF_p) = \set{\alpha^{2\ell}}{\ell \in \{ 0, \,\hdots,\, (p-3)/2\}}.
\]
\end{proof}
Note that, unlike in Example~\ref{ex:paley7}, the ordering of rows indexed using the construction above are not necessarily the ordering as a subsequence of $(1,\, \hdots ,\, p-1)$. Proposition~\ref{prop:palorbit} also implies that other than $\mathbbm{1}$, $(p-1)/2$ vectors in $\Phi_p$ have entries equal to $\zeta$ raised to some permutation of the set of the $\QR$ and $(p-1)/2$ vectors entries which have $\NR$ as the exponents.

We have one final lemma.  As this will also be used to prove results about Paley ETFs $\Phi_q$ with $q$ a prime power, we state it in more generality than what is needed for this section.
\begin{lem}\label{lem:AGamL}
Let $q = p^s>3$ be a prime power with $q \equiv 3 \bmod 4$. Let $X$ be a set of cardinality $q$. Assume that $G$ acts transitively and $2$-homogeneously but not $2$-transitively on $X$. Assume further that
\[
\tilde{G} =\set{f \in \operatorname{A\Gamma L}(\bF_q)}{f(x) = mx^\sigma+b, \,\, b \in \bF_q,\,\, m \in \QR(\bF_q),\,\,\sigma \in \operatorname{Aut}(\bF_q)}
\]
is a subgroup of $G$, where 
\[
\operatorname{A\Gamma L}(\bF_q) = \set{f: \bF_q \rar \bF_q}{f(x) = m x^\sigma + b,\,\,b \in \bF_q, \,\, m \in \bF_q^\times, \,\,\sigma \in \operatorname{Aut}(\bF_q) }.
\] 
Then $G = \tilde{G}$.
\end{lem}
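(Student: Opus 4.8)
The plan is to realize $\tilde G$ as the full automorphism group of the Paley tournament on $\bF_q$ and to show that the hypotheses force every element of $G$ to be one of its automorphisms. Let $P_q$ be the tournament with vertex set $\bF_q$ whose ordered pairs of distinct vertices split as
\[
O_{\QR} = \set{(x,y)}{y - x \in \QR(\bF_q)}, \qquad O_{\NR} = \set{(x,y)}{y - x \in \NR(\bF_q)}.
\]
Because $q \equiv 3 \bmod 4$ we have $\NR(\bF_q) = -\QR(\bF_q)$, so reversing a pair interchanges $O_{\QR}$ and $O_{\NR}$; this is exactly what makes the relation a genuine tournament. First I would record the standard computation that $\tilde G$ has precisely the two orbits $O_{\QR}$ and $O_{\NR}$ on ordered pairs of distinct points: the translations in $\tilde G$ act regularly on $\bF_q$, so it suffices to understand the stabilizer of $0$, namely $\set{x \mapsto m x^\sigma}{m \in \QR(\bF_q),\ \sigma \in \operatorname{Aut}(\bF_q)}$, and since both multiplication by $m \in \QR(\bF_q)$ and every Frobenius power preserve the quadratic-residue/non-residue dichotomy, the orbits of this stabilizer on $\bF_q^\times$ are exactly $\QR(\bF_q)$ and $\NR(\bF_q)$.

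The key step is an orbit count. Since $\tilde G \leq G$, each $G$-orbit on ordered pairs of distinct points is a union of $\tilde G$-orbits, so $G$ has at most two such orbits. On the other hand $G$ is not $2$-transitive, so it has at least two. Hence $G$ has exactly two orbits on ordered pairs, and, each being a union of $O_{\QR}$ and $O_{\NR}$, they must be $O_{\QR}$ and $O_{\NR}$ themselves. In particular both $O_{\QR}$ and $O_{\NR}$ are $G$-invariant, which is precisely the statement that every $g \in G$ preserves the arc relation of $P_q$; that is, $G \leq \operatorname{Aut}(P_q)$.

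It then remains to identify $\operatorname{Aut}(P_q)$. The inclusion $\tilde G \leq \operatorname{Aut}(P_q)$ is immediate, since translations preserve differences, multiplication by $m \in \QR(\bF_q)$ fixes the residue class of every difference, and a field automorphism is additive and carries squares to squares. The reverse inclusion $\operatorname{Aut}(P_q) \leq \tilde G$ is the classical determination of the automorphism group of a Paley tournament, and this is the one nontrivial ingredient I would import rather than reprove. Granting it gives $G \leq \operatorname{Aut}(P_q) = \tilde G \leq G$, so $G = \tilde G$. I expect this identification of $\operatorname{Aut}(P_q)$ to be the main obstacle. It is worth noting that, once $\tilde G \leq G$ is assumed, only the failure of $2$-transitivity is actually used; the hypotheses that $G$ is transitive and $2$-homogeneous are consistent with, but not needed for, the conclusion.

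As an alternative that sidesteps citing the Paley-tournament automorphism group, one could invoke Kantor's classification of $2$-homogeneous permutation groups \cite{kantor1972khomo}: a $2$-homogeneous but not $2$-transitive group of degree $q$ is contained in $\AGaL(\bF_q)$ with respect to the affine structure determined by its unique elementary abelian regular socle. The delicate point there is to verify that this affine structure coincides with the one carried by the given subgroup $\tilde G$, i.e.\ that the translation subgroup of $\tilde G$ is the socle of $G$; once this is checked, $[\AGaL(\bF_q) : \tilde G] = 2$ and the $2$-transitivity of $\AGaL(\bF_q)$ (which contains the sharply $2$-transitive $\AGL(\bF_q)$) force $G = \tilde G$.
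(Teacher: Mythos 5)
Your proposal is correct, and it takes a genuinely different route from the paper's proof. The paper stays entirely inside permutation group theory: it invokes Theorem 1 of \cite{kantor1972khomo} to conclude that any $2$-homogeneous but not $2$-transitive group of degree $q$ embeds in $\AGaL(\bF_q)$, notes that $q \equiv 3 \bmod 4$ makes $|\tilde{G}| = q\cdot\frac{q-1}{2}\cdot s$ odd and $\tilde{G}$ of index $2$ (hence maximal) in $\AGaL(\bF_q)$, and then rules out the only other possibility $G = \AGaL(\bF_q)$ either via the odd-order property of $2$-homogeneous non-$2$-transitive groups \cite{kantor1969auto} or via the $2$-transitivity of $\AGaL(\bF_q)$ --- this is precisely the alternative you sketch in your closing paragraph. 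Your main argument instead trades the permutation-group import for a finite-field import: the orbit count ($\tilde{G} \leq G$ gives at most two orbits on ordered pairs of distinct points, failure of $2$-transitivity gives at least two) is elementary and correct, and it reduces the lemma to the classical theorem that the automorphism group of the Paley tournament on $\bF_q$ is exactly $\tilde{G}$ (McConnel's theorem on permutations preserving the quadratic character of differences, proved for tournaments by Goldberg and Berggren); that import is legitimate and of depth comparable to Kantor's theorem, so neither route is more self-contained than the other, they just outsource different ingredients. Two of your side observations deserve emphasis: (i) you are right that only the failure of $2$-transitivity is actually used --- transitivity and $2$-homogeneity of $G$ are automatic consequences of $\tilde{G} \leq G$, so those hypotheses are redundant; and (ii) you correctly flag a subtlety the paper glosses over, namely that Kantor's theorem embeds $G$ into $\AGaL(\bF_q)$ with respect to some a priori unrelated field structure on $X$. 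For the record, that gap can be closed without identifying the socle: under any identification, $|G|$ divides $|\AGaL(\bF_q)| = q(q-1)s$, and since $|\tilde{G}| = q(q-1)s/2$ divides $|G|$, either $G = \tilde{G}$ or $|G| = |\AGaL(\bF_q)|$, and the latter would make $G$ isomorphic as a permutation group to the $2$-transitive group $\AGaL(\bF_q)$, a contradiction.
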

\begin{proof}
Theorem 1 of~\cite{kantor1972khomo} yields that any group which acts $2$-homogeneously but not $2$-transitively on a set of size at least $4$ must be a subgroup of $ \operatorname{A\Gamma L}(\bF_q)$.  Note that since $q \equiv 3 \bmod 4$, $s$ and the cardinality of $\QR(\bF_q)$ must also both be odd, i.e., $\tilde{G}$ has odd order.  Also, since $\QR(\bF_q)$ is index $2$ in $\bF_q^\times$, $G$ is a maximal subgroup of $\operatorname{A\Gamma L}(\bF_q)$ of odd order.  It follows from the proof of Proposition 3.1 in~\cite{kantor1969auto} that a symmetry group which is $2$-homogeneous but not $2$-transitive must be of odd order. (Alternatively, one may note that $\operatorname{A\Gamma L}(\bF_q)$ acts $2$-transitively on $\bF_q$ and that $\tilde{G}$ is a maximal proper subgroup.) Hence, $G = \tilde{G}$, as desired.
\end{proof}
Note that when $q=p$, a prime, $\operatorname{A\Gamma L}(\bF_q)$ is denoted $\AGL(\bF_p)$ and is parameterized by $\bF_p$ and $\bF_p^{\times}$ since there are no nontrivial field automorphisms.

\begin{thm}\label{thm:Paleydbhm}
Let $p>3$ be a prime with $p \equiv 3 \bmod 4$.  Define $M= M_{\{p\}}$ and $T=T_{(p-1)/2}$.  Then
\begin{align*}
\lefteqn{G:= \set{M^k T^{\ell}}{k \in \{ 0, \,\hdots,\, p-1\}, \, \, \ell \in \{ 0, \,\hdots,\, (p-3)/2\}}}\\
 &\cong \bZ_p \rtimes \bZ_{(p-1)/2} \cong \set{f \in \AGL(\bF_p)}{f(x) = mx+b,\,\, b \in \bF_p, \,\, m \in \QR(\bF_p)}
\end{align*}
is the group of vector automorphisms of the Paley ETF $\Phi_p$. The vector and line symmetry groups are $\Sym_v(\Phi_p) = \Sym_{\ell}(\Phi_p) \cong G$. The symmetry groups act $2$-homogeneously but not $2$-transitively on the vectors, respectively the lines.
\end{thm}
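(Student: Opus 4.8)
The plan is to prove the three assertions — the group isomorphism, the identification $\Sym_v(\Phi_p)=\Sym_{\ell}(\Phi_p)=G$, and the $2$-homogeneity without $2$-transitivity — in four stages, reducing everything to the combinatorics of the Paley tournament on $\bF_p$ together with Lemma~\ref{lem:AGamL}.

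First I would make the induced permutations explicit. Writing $\varphi_k=M^k\mathbbm{1}$ as in Proposition~\ref{prop:palorbit} and indexing the $(p-1)/2$ rows by $\alpha^0,\alpha^2,\dots,\alpha^{p-3}$, so that $(\varphi_k)_i=\zeta^{k\alpha^{2i}}$, a short calculation gives $M\varphi_k=\varphi_{k+1}$ and $T\varphi_k=\varphi_{\alpha^{-2}k}$; that is, on the column-index set $\bF_p$ the generator $M$ induces the translation $y\mapsto y+1$ and $T$ induces multiplication $y\mapsto\alpha^{-2}y$ by the $\QR$-generator $\alpha^{-2}$. Hence $M^kT^{\ell}$ induces $y\mapsto\alpha^{-2\ell}y+k$, and as $(k,\ell)$ ranges over the stated index set these run through exactly the affine maps $y\mapsto my+b$ with $m\in\QR(\bF_p)$, $b\in\bF_p$. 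Because $\Phi_p$ spans, the unitary realizing a given permutation is unique, so the representation is faithful; reading off $M^p=I$, $T^{(p-1)/2}=I$ and $TMT^{-1}=M^{\alpha^{-2}}$ identifies $G\cong\bZ_p\rtimes\bZ_{(p-1)/2}\cong\tilde G$ of order $p(p-1)/2$, and by construction $G\le\Sym_v(\Phi_p)\le\Sym_{\ell}(\Phi_p)$.

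Next I would pin down $\Sym_v(\Phi_p)$. Computing $\ip{\varphi_j}{\varphi_k}=\sum_{x\in\QR}\zeta^{(k-j)x}$ shows this depends only on whether $k-j\in\QR$ or $k-j\in\NR$, taking two Gauss-sum values $S$ and $\overline S$; since $p\equiv3\bmod4$ these are non-real (their sum is $-1$ and product $(p+1)/4$) and in particular distinct. By the vector part of Theorem~\ref{thm:TP}, $\sigma\in\Sym_v(\Phi_p)$ precisely when $\sigma$ preserves the relation ``$k-j\in\QR$'', i.e.\ $\Sym_v(\Phi_p)$ is the automorphism group of the Paley tournament on $\bF_p$. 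This group contains $G$, hence is transitive and $2$-homogeneous, and it is not $2$-transitive because it cannot send the pair $(0,1)$, whose difference lies in $\QR$, to a pair whose difference lies in $\NR$. Lemma~\ref{lem:AGamL}, applied with the subgroup $\tilde G\le\Sym_v(\Phi_p)$, then forces $\Sym_v(\Phi_p)=\tilde G=G$.

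The crux is the reverse inclusion $\Sym_{\ell}(\Phi_p)\subseteq\Sym_v(\Phi_p)$, i.e.\ that passing from vectors to lines creates no new symmetries. Here I can only use the triple-product invariant $\TP$ from Theorem~\ref{thm:TP}, which a priori is preserved by Seidel-type switchings as well as by tournament automorphisms, so the inner-product argument above does not transfer directly. My plan is to extract a pair invariant from $\TP$: for an ordered pair $(a,b)$ consider the multiset $\{\TP(a,b,x):x\ne a,b\}$, which any line automorphism must preserve. Writing each inner product as $\rho e^{\pm i\theta}$, one computes that the value $\rho^3 e^{3i\theta}=S^3$ occurs in this multiset if and only if $a-b\in\NR$ (equivalently $b-a\in\QR$), with multiplicity the codegree $\#\{x:x-a\in\NR,\ x-b\in\QR\}$, which is positive by the doubly-regular (difference-set) structure of the Paley tournament from Theorem~\ref{thm:paleydiff}. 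The one arithmetic fact needed is $S^3\notin\bR$ for $p>3$ — which follows since $\cos\theta=-1/\sqrt{p+1}$ cannot equal $\pm\tfrac12$ or $\pm1$ when $p>3$ — so that $S^3$ is genuinely distinguishable from the other triple-product values. Consequently every line automorphism preserves the $\QR/\NR$ type of each ordered difference, hence lies in $\Sym_v(\Phi_p)$; combined with the general inclusion $\Sym_v(\Phi_p)\subseteq\Sym_{\ell}(\Phi_p)$ this gives $\Sym_{\ell}(\Phi_p)=\Sym_v(\Phi_p)=G$. The $2$-homogeneous-but-not-$2$-transitive claim for both symmetry groups is then exactly what was established in the previous stage. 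I expect this third stage to be the main obstacle, precisely because ruling out the switching symmetries requires the Paley-specific codegree positivity and the phase computation, rather than the soft group-theoretic input of Lemma~\ref{lem:AGamL}.
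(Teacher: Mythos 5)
Your proposal is correct, and at the decisive step it takes a genuinely different route from the paper. Your stage 1 matches the paper's opening (explicit computation showing $M$ and $T$ induce the affine maps $y\mapsto my+b$ with $m\in\QR$, hence $G\le\Sym_v(\Phi_p)\le\Sym_{\ell}(\Phi_p)$, plus the semidirect-product identification). But where the paper establishes $2$-homogeneity directly and then obtains ``not $2$-transitive'' for $\Sym_{\ell}(\Phi_p)$ by citing \cite{iverson2024doubly} before applying Lemma~\ref{lem:AGamL} to $\Sym_{\ell}(\Phi_p)$ --- and even the paper's alternate proof in Section~\ref{sec:altproof} leans on that same citation --- you avoid the external input entirely. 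You prove non-$2$-transitivity of $\Sym_v(\Phi_p)$ directly (vector automorphisms preserve the ordered relation $k-j\in\QR$ because the two Gauss values satisfy $S\neq\overline S$, so no automorphism can reverse a Paley-tournament edge), you pin down $\Sym_v(\Phi_p)=G$ by Lemma~\ref{lem:AGamL}, and then your stage 3 --- which has no counterpart in either of the paper's proofs --- shows $\Sym_{\ell}(\Phi_p)\subseteq\Sym_v(\Phi_p)$ via a pair invariant: $S^3$ appears in the multiset $\set{\TP(a,b,x)}{x\neq a,b}$ exactly when $b-a\in\QR$, using codegree positivity (derivable from Theorem~\ref{thm:paleydiff}) and separation of $S^3$ from the other triple-product values. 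What the paper's route buys is brevity and uniformity with the prime-power case; what yours buys is self-containedness and a reusable technique. The separation you need is essentially the paper's Lemma~\ref{lem:TPsdiff}, proved there with Lenstra's theorem on vanishing sums of roots of unity; your elementary derivation from $\operatorname{Re}(S)=-\tfrac12$ and $\abs{S}^2=(p+1)/4$ is a legitimate lighter-weight substitute.

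Two small repairs before this is airtight. First, ``$S^3\notin\bR$'' is not the only arithmetic fact you need: to keep $S^3$ out of the multiset for pairs whose difference has the other type, you must also rule out $S^3=\abs{S}^2S$ and $S^3=\abs{S}^2\overline{S}$, i.e.\ $S^2\neq\abs{S}^2$ and $S^2\neq\overline{S}^2$; both follow from $S\notin\bR$ together with $\operatorname{Re}(S)=-\tfrac12\neq 0$, which you already established, so just say so explicitly. Second, in stage 2 you deduce that $\Sym_v(\Phi_p)$ is $2$-homogeneous ``because it contains $G$,'' but $2$-homogeneity of $G$ itself is part of the theorem's conclusion and is nowhere proved in your plan; supply the one-line argument (this is exactly the step the paper spells out): given pairs $\{x,y\}$ and $\{z,w\}$, since $\NR=-\QR$ you may order them so that $x-y,\,z-w\in\QR$, and then $u\mapsto(z-w)(x-y)^{-1}(u-y)+w$ lies in $G$ and carries $\{x,y\}$ to $\{z,w\}$.
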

A note of warning for those accustomed to thinking of an affine group as a translation group semidirect product a scaling group, in particular those with a background in wavelet theory: The group of matrices $\{T^{\ell}\}$ which translate the entries of the vectors act like dilations in the correspondence to $\AGL(\bF_p)$.
\begin{proof}
We first show that $\{M^k T^{\ell}\}$ is isomorphic to the given subgroup of $\AGL(\bF_p)$ and that $\{M^k T^{\ell}\} \leq\Sym_v(\Phi_p)$.  To that end, we note that the order of $M$ is $p$ and the order of $T$ is $(p-1)/2$. This also means that raising $M$ to elements of $\bF_p$ is well-defined when interpreted mod $p$. Further, for $\alpha$ the primitive element used to define $M$, $k \in \{ 0, \,\hdots,\, p-1\}$, and $\ell \in  \{ 0, \,\hdots,\, (p-3)/2\}$,  we have by direct computation on the entries of the matrices
\beq\label{eqn:semidir}
T^{\ell} M^k T^{-\ell} = M^{k \alpha^{-2\ell}}.
\eeq
Clearly, $T^{\ell} \mathbbm{1} = \mathbbm{1}$ for any $\ell \in  \{ 0, \,\hdots,\, (p-3)/2\}$. Thus,~\eqref{eqn:semidir} yields for any frame vector  $\varphi_{\alpha^{2k}}$ parameterized by a quadratic residue $\alpha^{2k}$,
\[
T^{\ell} \varphi_{\alpha^{2k}} = T^{\ell} M^{\alpha^{2k}} \mathbbm{1} = M^{\alpha^{2(k-\ell)}} T^{\ell} \mathbbm{1} =M^{\alpha^{2(k-\ell)}} \mathbbm{1} = \varphi_{\alpha^{2(k-\ell)}}.
\]
That is, $T^{\ell}$ maps vectors parameterized by quadratic residues to other vectors parameterized by quadratic residues cyclically.  
A similar calculation shows that left-multiplying by $T$ cyclically permutes the vectors parameterized by the quadratic non-residues. To be clear: $T$ acts on $\bC^{(p-1)/2}$ by cyclically permuting the entries of vectors.  When applied to $\Phi_p$, this cyclic permutation of the rows induces a cyclic permutation of the columns indexed by the quadratic residues and a cyclic permutation of the columns indexed by the quadratic non-residues. So, $\set{M^k T^{\ell}}{k \in \{ 0, \,\hdots,\, p-1\}, \, \, \ell \in \{ 0, \,\hdots,\, (p-3)/2\}}$ is a subgroup of the vector automorphisms.
The commutation relation in~\eqref{eqn:semidir} also yields
\[
\set{M^k T^{\ell}}{k \in \{ 0, \,\hdots,\, p-1\}, \, \, \ell \in \{ 0, \,\hdots,\, (p-3)/2\}} \cong \bZ_p \rtimes \bZ_{(p-1)/2},
\]
with corresponding group operation
\[
\left(k,\ell\right)\left(\tilde{k},\tilde{\ell}\right) = \left(k+\tilde{k} a^{-\ell}, \ell + \tilde{\ell}\right), 
\]
which may be rewritten as
\[
\left(k,a^{\ell}\right)\left(\tilde{k},a^{\tilde{\ell}}\right) = \left(k+\tilde{k} a^{-\ell}, a^{-\ell}a^{-\tilde{\ell}}\right),
\]
which is isomorphic to 
\[
G:= \set{f \in \AGL(\bF_p)}{f(x) = mx+b,\,\, b \in \bF_p, \,\, m \in \QR(\bF_p)}.
\]

Now we would like to show that $\Sym_v(\Phi_p) = \Sym_{\ell}(\Phi_p) \cong G$ and that the groups act $2$-homogeneously but not $2$-transitively on the vectors/lines.

Note that Proposition~\ref{prop:palorbit} implies that the vector symmetry group $\Sym_v(\Phi_p)$ and thus also the line symmetry group $\Sym_{\ell}(\Phi_p)$ act transitively since the vectors in $\Phi_p$ are the orbit of $\mathbbm{1}$ under $\{M^k\}$. We further have that $\{M^k T^{\ell}\}$ acts $2$-homogeneously on the vectors as follows.  Let $x \neq y$ and $z \neq w$ be in $\bF_p$. Since $p \equiv 3 \bmod 4$, $\NR = -\QR$. In particular either $x-y$ or $y-x$ is a quadratic residue, and the same is true for $z$ and $w$. Thus, without loss of generality, we pick an ordering so that $x-y, z-w \in \QR$.  Then $M^{-y}\varphi_y = M^{-w}\varphi_w = \mathbbm{1}$, and $M^{-y}\varphi_x$ and $M^{-w}\varphi_z$ are both columns indexed by quadratic residues. We can then apply a translation $T^{\ell}$ as needed, which fixes $M^{-y}\varphi_y = M^{-w}\varphi_w = \mathbbm{1}$ and maps $M^{-w}\varphi_z$ to $M^{-y}\varphi_x$. Thus, $M^y T^{-\ell} M^{-w}$ maps $\{\varphi_z,\varphi_w\}$ to $\{\varphi_x,\varphi_y\}$.

We know from~\cite{iverson2024doubly} that $\Sym_{\ell}(\Phi_p)$ is not $2$-transitive and thus neither is $\Sym_v(\Phi_p)$. It thus follows from Lemma~\ref{lem:AGamL} that $\Sym_v(\Phi_p) = \Sym_{\ell}(\Phi_p) \cong G$.
\end{proof}

This theorem (and the more general result Theorem~\ref{thm:Paleydbhm2}) should not be incredibly surprising, given that Paley designs are the only $2$-homogeneous but not $2$-transitive Hadamard designs~\cite{kantor1969auto}. Note, however, that the Paley Hadamard designs built from $\bF_{11}$ and $\bF_7$ are $2$-transitive~\cite{todd1933combinatorial,kantor1969transitive}, differentiating this result from the similar result for Paley ETFs. As we will see in Theorem~\ref{thm:khomETF}, Paley skew-conference matrices, which are also built using quadratic residues, also yield an infinite class of ETFs which are $3$-homogeneous but not $3$-transitive.  

\subsection{Alternate Proof of Theorem~\ref{thm:Paleydbhm}} \label{sec:altproof}
In this section, we will provide an alternate proof characterizing the symmetry groups of $\Phi_p$ which filters through the inner products, respectively, triple products (i.e., uses Theorem~\ref{thm:TP}). This has the benefit that some of the techniques might be useful in the analysis of symmetry groups of other ETFs but the drawback that the explicit unitary matrices which are vector, respectively, line automorphisms are not constructed. We conclude with a short example of using the approach on another class of ETFs (Gabor-Steiner).  In order to use Theorem~\ref{thm:TP}, we need to characterize the inner products and triple products of the Paley ETF vectors. 

\begin{lem}\label{lem:paleyTP}
Let $p>3$ be a prime with $p \equiv 3 \bmod 4$ and consider the Paley ETF $\Phi_p$. Further let $\zeta$ be the primitive $p$th root of unity and $\alpha^2$ the generator of $\QR(\bF_p)$ used to produce $\Phi_p$.  Finally, set
\[
a = \sum_{\ell=0}^{(p-3)/2} \zeta^{\alpha^{2\ell}}.
\]
Then
\[
\TP(j,k,\ell) =\left\{ \begin{array}{lr} a^3; & j-k, k-\ell, \ell - j \in \QR\\[2pt] \abs{a}^2 a; & \textrm{ two of $j-k, k-\ell, \ell-j \in \QR$} \\[2pt] \abs{a}^2 \overline{a}; & \textrm{ one of $j-k, k-\ell, \ell-j \in \QR$} \\[2pt]  \overline{a}^3; & \ j-k, k-\ell, \ell - j \in \NR \end{array}\right.
\]
\end{lem}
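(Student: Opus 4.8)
The plan is to reduce the lemma to a computation of the pairwise inner products $\ip{\varphi_j}{\varphi_k}$ and then to assemble the triple products by a short case analysis. First I would record the entries of the frame vectors explicitly from the construction in Section~\ref{sec:ff}: indexing the rows of $\Phi_p$ by the quadratic residues $r \in \QR(\bF_p)$ and the columns by $j \in \bF_p$, the entry in row $r$ of $\varphi_j$ is $\zeta^{rj}$. Hence for distinct $j,k$, with the paper's inner-product convention,
\[
\ip{\varphi_j}{\varphi_k} = \sum_{r \in \QR(\bF_p)} \zeta^{rj}\,\overline{\zeta^{rk}} = \sum_{r \in \QR(\bF_p)} \zeta^{r(j-k)},
\]
so each inner product of distinct vectors is a multiplicatively twisted version of the quadratic Gauss sum $a = \sum_{r \in \QR(\bF_p)} \zeta^{r}$.

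The key step is to show that $\ip{\varphi_j}{\varphi_k}$ depends only on whether $j-k$ lies in $\QR(\bF_p)$ or $\NR(\bF_p)$, and equals $a$ or $\overline{a}$ respectively. Since $\QR(\bF_p)$ is a multiplicative subgroup of $\bF_p^\times$, if $j-k \in \QR(\bF_p)$ then $r \mapsto r(j-k)$ permutes $\QR(\bF_p)$, giving $\ip{\varphi_j}{\varphi_k} = a$; if instead $j-k \in \NR(\bF_p)$, the same map carries $\QR(\bF_p)$ bijectively onto $\NR(\bF_p)$, giving $\ip{\varphi_j}{\varphi_k} = \sum_{s \in \NR(\bF_p)} \zeta^{s}$. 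Here I would invoke the hypothesis $p \equiv 3 \bmod 4$, which forces $\NR(\bF_p) = -\QR(\bF_p)$; writing each $s \in \NR(\bF_p)$ as $s=-r$ with $r \in \QR(\bF_p)$ and using $\zeta^{-r} = \overline{\zeta^{r}}$ then yields $\sum_{s \in \NR(\bF_p)} \zeta^{s} = \overline{a}$.

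With the inner products in hand the triple product is immediate. The three factors of $\TP(j,k,\ell) = \ip{\varphi_j}{\varphi_k}\ip{\varphi_k}{\varphi_\ell}\ip{\varphi_\ell}{\varphi_j}$ are governed by the three differences $j-k$, $k-\ell$, $\ell-j$, and each factor equals $a$ when its difference is a residue and $\overline{a}$ otherwise. Because multiplication in $\bC$ is commutative, the product depends only on how many of the three differences are residues, producing $a^3$, then $a^2\overline{a} = \abs{a}^2 a$, then $a\,\overline{a}^2 = \abs{a}^2\overline{a}$, and finally $\overline{a}^3$, exactly the four cases stated. There is no serious obstacle here: the whole argument is a direct Gauss-sum calculation. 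What I would flag as the only genuine subtlety is the bookkeeping in the second paragraph, namely fixing the inner-product convention so that $j-k \in \QR(\bF_p)$ produces $a$ rather than $\overline{a}$, keeping the conjugations consistent, and correctly evaluating the non-residue sum through $\NR(\bF_p) = -\QR(\bF_p)$. Everything else is routine substitution.
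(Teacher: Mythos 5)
Your proposal is correct and follows essentially the same route as the paper's proof: compute $\ip{\varphi_j}{\varphi_k}=\sum_{\ell}\zeta^{(j-k)\alpha^{2\ell}}$, use that multiplication by a residue permutes $\QR(\bF_p)$ while multiplication by a non-residue maps it onto $\NR(\bF_p)=-\QR(\bF_p)$ (so the sum is $a$ or $\overline{a}$), and then multiply the three factors. The only difference is that you spell out the conjugation bookkeeping that the paper compresses into one line.
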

\begin{proof}
First, we note that
\[
\ip{\varphi_j}{\varphi_k} = \sum_{\ell=0}^{(p-3)/2} \zeta^{j\alpha^{2\ell}}\zeta^{-k\alpha^{2\ell}}= \sum_{\ell=0}^{(p-3)/2}\zeta^{(j-k)\alpha^{2\ell}}=\left\{ \begin{array}{lr} (p-1)/2 & j=k \\ a & j-k \in \QR \\\overline{a}& j-k \in \NR\end{array}\right.
\]
since $(j-k)\alpha^{2\ell} \in \QR$ if and only if $j-k \in \QR$ and also $\NR=-\QR$. So, for distinct $j, k, \ell$, the desired result immediately follows.
\end{proof}
These triple products are distinct.
\begin{lem}\label{lem:TPsdiff}
Let $a$ be as in Lemma~\ref{lem:paleyTP}. Then $a^3$, $\abs{a}^2 a$, $ \abs{a}^2 \overline{a}$, and $\overline{a}^3$ are pairwise distinct.
\end{lem}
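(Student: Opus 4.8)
The plan is to reduce the six pairwise inequalities among $a^3,\ \abs{a}^2 a,\ \abs{a}^2\overline{a},\ \overline{a}^3$ to two elementary scalar conditions on the Gauss sum $a$, and then verify those conditions using only facts already available in the paper. Writing $\abs{a}^2 = a\overline{a}$, the four quantities become $a^3,\ a^2\overline{a},\ a\overline{a}^2,\ \overline{a}^3$, a short geometric progression in $\overline{a}/a$. The two facts I would record first are: (i) since $p\equiv 3\bmod 4$ forces $\NR=-\QR$, we have $\overline{a}=\sum_{x\in\NR}\zeta^{x}$, hence $a+\overline{a}=\sum_{x\in\bF_p^{\times}}\zeta^{x}=-1$; and (ii) expanding $\abs{a}^2=a\overline{a}=\sum_{x,y\in\QR}\zeta^{x-y}$ and invoking the difference-set property of $\QR$ from Theorem~\ref{thm:paleydiff} (each nonzero difference is attained $(p-3)/4$ times, the diagonal contributes $(p-1)/2$, and $\sum_{c\neq 0}\zeta^{c}=-1$) gives $\abs{a}^2=\tfrac{p-1}{2}-\tfrac{p-3}{4}=\tfrac{p+1}{4}$. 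In particular $a\neq 0$.

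Next I would carry out the reduction by subtracting pairs and factoring. Each difference contains one of the factors $a$, $\overline{a}$, $(a-\overline{a})$, or $(a+\overline{a})$; for instance
\[
a^3-a^2\overline{a}=a^2(a-\overline{a}),\qquad a^3-a\overline{a}^2=a(a-\overline{a})(a+\overline{a}),\qquad a^2\overline{a}-a\overline{a}^2=a\overline{a}\,(a-\overline{a}),
\]
and the remaining pairs factor the same way. Because $a\neq 0$ and, by fact (i), $a+\overline{a}=-1\neq 0$, every one of these comparisons collapses to the single condition $a=\overline{a}$. The one exceptional pair is
\[
a^3-\overline{a}^3=(a-\overline{a})\bigl(a^2+a\overline{a}+\overline{a}^2\bigr)=(a-\overline{a})\bigl(1-\abs{a}^2\bigr),
\]
where I have used $(a+\overline{a})^2=1$ to rewrite $a^2+a\overline{a}+\overline{a}^2=(a+\overline{a})^2-a\overline{a}=1-\abs{a}^2$. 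Thus this pair vanishes only if $a=\overline{a}$ \emph{or} $\abs{a}^2=1$.

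Finally I would dispatch both conditions from the recorded facts. If $a=\overline{a}$, then $2a=a+\overline{a}=-1$, forcing $\abs{a}^2=\tfrac14$, which contradicts $\abs{a}^2=\tfrac{p+1}{4}$; hence $a\neq\overline{a}$ (equivalently, $a$ is non-real). And $\abs{a}^2=\tfrac{p+1}{4}>1$ holds precisely because $p>3$, so $\abs{a}^2\neq 1$. Together these rule out all six coincidences. I expect the only real obstacle to be the comparison of $a^3$ with $\overline{a}^3$: unlike every other pair it does not reduce to $a=\overline{a}$ alone but genuinely invokes the magnitude of the Gauss sum, and it is exactly here that the hypothesis $p>3$ is essential --- for $p=3$ one has $\abs{a}^2=1$, $a=\zeta$ a primitive cube root of unity, and indeed $a^3=\overline{a}^3=1$, so the statement fails. (If one prefers, fact (ii) together with the classical evaluation of the quadratic Gauss sum gives the explicit value $a=\tfrac{-1+i\sqrt{p}}{2}$, from which both conditions are immediate, but the difference-set computation above keeps the argument self-contained.)
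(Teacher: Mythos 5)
Your proof is correct, and it takes a genuinely different route from the paper's. The paper performs essentially the same initial reduction---a coincidence among $a^3$, $\abs{a}^2a$, $\abs{a}^2\overline{a}$, $\overline{a}^3$ forces $a=\overline{a}\omega$ for some sixth root of unity $\omega\in\{1,-1,\eta,\eta^2\}$---but then rules this out with algebraic number theory: $a-\overline{a}\omega$ is a rational linear combination of $p-1<p$ distinct $6p$th roots of unity whose pairwise ratios have smallest order $p$, and Lenstra's theorem on vanishing sums of roots of unity forbids such a sum from vanishing with fewer than $p$ terms. You instead evaluate the two symmetric quantities $a+\overline{a}=-1$ and $\abs{a}^2=(p+1)/4$ (the latter from the difference-set property of $\QR$, Theorem~\ref{thm:paleydiff}), after which your pairwise factorizations collapse every potential coincidence to either $a=\overline{a}$ or $\abs{a}^2=1$, both impossible for $p>3$. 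Your route is more elementary and self-contained: it uses nothing beyond facts already established in the paper, it produces the actual magnitude of the Gauss sum, and it isolates exactly where the hypothesis $p>3$ enters (as you note, the statement genuinely fails at $p=3$). What the paper's route buys in exchange is uniformity and portability: it never needs the explicit value of $\abs{a}$, it dispatches all four bad twists $\omega$ in a single stroke, and the vanishing-sums technique applies in settings where an exact Gauss-sum evaluation is not available.
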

\begin{proof} At least one pair of the numbers in the above list is equal if and only if $a \in \{\overline{a}, -\overline{a}, \overline{a}\eta, \overline{a}\eta^2\}$ for $\eta$ a primitive third root of unity. Let $\omega$ be any sixth root of unity. Note that $a$ and $\overline{a}$  are sums of $(p-1)/2$ distinct primitive $p$th roots of unity (indexed by the quadratic residues and quadratic non-residues, respectively).  Thus, $a - \overline{a}\omega$ is a rational linear combination of $p-1$ distinct $6p$th roots of unity, where the smallest order of the ratio of any two of the roots of unity is $p$ (since $6$ and $p$ are relatively prime). Theorem 2.6 in~\cite{lenstra1979vanishing} (see also~\cite{conway1976trig}) yields that a rational linear combination of roots of unity, where the smallest order of the ratio of any two of the roots of unity is a prime $p$, can never vanish if there are fewer than $p$ terms. Thus, $a - \overline{a}\omega$ cannot be $0$, yielding that $a^3$, $\abs{a}^2 a$, $ \abs{a}^2 \overline{a}$ are four distinct numbers. \end{proof}

We are now ready to give an alternate proof of Theorem~\ref{thm:Paleydbhm}.
\begin{proof}[Alternate Proof of Theorem~\ref{thm:Paleydbhm}]
It follows from Lemma~\ref{lem:paleyTP} that the value of the inner product of two distinct vectors in $\Phi_p$ depends solely on whether the difference of the indices is a quadratic residue. In particular, if we cyclically shift all of the indices by $\ell \in \bF_p$, none of the inner products or triple products change (as $(j-\ell) - (k-\ell) = j-\ell$).  This means that $\Sym_v(\Phi_p)$ acts transitively. Also, note that multiplying the indices by a quadratic residue does not change the inner products or triple products since $\QR$ is a subgroup of $\bF_p^{\times}$ (i.e., $\alpha^{2\ell} j - \alpha^{2\ell} k = \alpha^{2\ell}(j-k) \in \QR \Leftrightarrow j-k \in \QR$).

Let $x \neq y$ and $z \neq w$ be in $\bF_p$. We may choose without loss of generality an ordering such that $x-y, z-w \in \QR$.  Then there is a quadratic residue $\alpha^{2\ell}$ that scales $x-y$ to $z-w$. So, we apply the following sequence of permutations: 
\[
(x,y) \,\rar\, (x-y, y-y)=(x-y,0) \,\rar \,(\alpha^{2 \ell}(x-y),\alpha^{2 \ell}(0)) = (z-w, 0) \, \rar \, ((z-w)+w,0+w) = (z,w).
\]
Hence, $\Sym_v(\Phi_p)$ acts $2$-homogeneously.   We know from~\cite{iverson2024doubly} that $\Sym_{\ell}(\Phi_p)$ is not $2$-transitive and thus neither is $\Sym_v(\Phi_p)$. The desired result now follows from Lemma~\ref{lem:AGamL}.
\end{proof}

As an another example of this technique, we consider the \emph{Gabor-Steiner ETFs} \cite{BoKi18}. (See also related constructions in \cite{BoEl10a,BoEl10b,IJM17,FJMPW17}.)  Namely, each finite abelian group $G$ of odd order $m$ generates a so-called Gabor-Steiner ETF $\Phi_G$ of $m^2$ vectors in $\bC^{m(m-1)/2}$.  The vectors of $\Phi_G$ are the orbit of a projective unitary representation of $G \times G$ and are thus parameterized by $G \times G$.  For $G = (\bF_p, +)$ with $p$ an odd prime and $\zeta$ a primitive $p$th root of unity, Lemma 5.2 of~\cite{BoKi18} yields that the triple products of the vectors are 
\[
\TP((k,\kappa),(\tilde{k},\tilde{\kappa}),(\widehat{k},\widehat{\kappa})) = \zeta^{[(k\widehat{\kappa}-\widehat{k}\kappa)+(\tilde{k}\kappa-k\tilde{\kappa})+(\widehat{k}\tilde{\kappa}-\tilde{k}\widehat{\kappa})](p+1)/2}.
\]
One can see from inspection (i.e., just writing out the coordinates and simplifying) that for $\operatorname{Sp}(2s,p) \subset \bF_p^{2s \times 2s}$ the group of $2s \times 2s$ symplectic matrices,
\[
\operatorname{ASp}(2,p) = \set{f: \bF_p^2 \rar \bF_p^2}{f(x) = M x + b, \,\, M \in \operatorname{Sp}(2,p), \,\, b \in \bF_p^2} \leq \Sym_{\ell}(\Phi_{ (\bF_p, +)})
\]
since those maps preserve $(k\widehat{\kappa}-\widehat{k}\kappa)+(\tilde{k}\kappa-k\tilde{\kappa})+(\widehat{k}\tilde{\kappa}-\tilde{k}\widehat{\kappa})$ for $(k,\kappa),(\tilde{k},\tilde{\kappa}),(\widehat{k},\widehat{\kappa}) \in \bF_p^2$.  In fact, one may further show that $\operatorname{ASp}(2s,p)$ preserves the triple products of the Gabor-Steiner ETFs for $G = (\bF_p^s, +)$ with $s \geq 1$. Since $\operatorname{ASp}(2s,p)$ acts $2$-transitively on $\bF_p^{2s}$, this means that the Gabor-Steiner ETFs for $G = (\bF_p^s, +)$ have $2$-transitive line symmetry group.  In fact, it is shown in~\cite{dempwolff2023trans} that for all odd primes $p$ and all $s \geq 1$, there are ETFs of $p^{2s}$ vectors in $\bC^{p^s(p^s-1)/2}$ with $2$-transitive line symmetry group isomorphic to $\operatorname{ASp}(2,p)$.  These ETFs are the unique $2$-transitive ETFs of these parameters up to isomorphism.  In particular, this means that the line symmetry groups of $\Phi_{ (\bF_p^s, +)}$ must be isomorphic to $\operatorname{ASp}(2s,p)$. The proofs in~\cite{dempwolff2023trans} rely on the classification of $2$-transitive groups and representation theory, but we were able use to the techniques of this section to quickly show that the line symmetry group of each $\Phi_{ (\bF_p^s, +)}$ acts $2$-transitively and contains a subgroup isomorphic to $\operatorname{ASp}(2s,p)$.

\subsection{Paley ETFs over Fields of Prime Power Order}\label{sec:primepow}

We seek to prove a generalization of Theorem~\ref{thm:Paleydbhm} which holds for any Paley ETF, not just those built over a field of prime order.  In the proof of  Theorem~\ref{thm:Paleydbhm}, we were easily able to relate the multiplicative structure of the quadratic residues, which was used to index the rows, with the exponents in the entries of the vectors.  When dealing with Paley ETFs over $\bF_{p^s}$ for $s > 1$, we will need to connect the multiplicative structure of $\QR$ in $\bF_{p^s}^\times$ with the additive structure of the vector space $\bF_p^s$ in a way that works regardless of the minimal polynomial used to represent $\bF_{p^s}$ as a Galois extension of $\bF_p$.

We need to define unitary operators supplementing those in Definition~\ref{defn:opers}, which we will prove in Theorem~\ref{thm:Paleydbhm2} form the line automorphism group.  In particular, we need to define an operator that corresponds to the Galois group action in $\operatorname{A\Gamma L}(\bF_q)$ (cf.\ Lemma~\ref{lem:AGamL}).
\begin{defn}\label{defn:opers2}
Let $q=p^s>3$ be a prime power with $q \equiv 3 \bmod 4$. Let $\Phi_q$ be the Paley ETF with rows indexed by $\set{\alpha^{2 k}}{k \in \{0, \hdots, (q-3)/2\}}$ for $\alpha$ a primitive element. The order-$q$ \emph{Paley modulation group} consists of $M_{\{q\}}^{\{b\}} \in \U((q-1)/2)$ the diagonal matrix with diagonal elements the vector in $\Phi_q$ indexed by $b$. The \emph{Galois permutation} $\Pi_{q} \in  \U((q-1)/2)$ maps coordinate $\ell$ to coordinate $p \ell \bmod (q-1)/2$.
\end{defn}
As before, when the parameters are clear from context, we will omit the subscripts. Note that 
\[
\set{M_{\{q\}}^{\{b\}}}{b \in \bF_q}
\]
forms a group isomorphic to $(\bF_q,+)$.  This follows from the character table structure of the Fourier matrices (Example~\ref{ex:fourorbit}).  Galois theory also yields that $\Pi_q$ is well-defined and order $s$.  When $q=p$ is a prime, $S$ is the $1 \times 1$ identity over $\bF_p$ and $\Pi$ the $(p-1)/2 \times (p-1)/2$ identity over $\bC$.

\begin{ex}
We define the operators of note for $\Phi_{27}$.  $T_{13}$ is the $13 \times 13$ cyclic translation matrix that implements (via left multiplication) the row permutation $(0 \,1 \,2 \,3 \,\cdots \,12)$.  $\Pi_{27}$ is the $13 \times 13$ permutation matrix that implements (via left multiplication) the row permutation $(0)(1 \,3 \,9)(2\, 6\, 5)(4\, 12 \,10)(7\, 8 \,11)$.  Note that $0$ is fixed because it corresponds to $1=\alpha^0$, i.e., the lone quadratic residue in the base field $\bF_3$.  Finally, for $\zeta$ the primitive third root of unity used to define $\Phi_{27}$, the modulation group is
\[
\{ I \} \cup \set{\diag\left(\left(\zeta^{\ip{\alpha^{2k}}{\alpha^{\ell}}_3}\right)_{k \in \{0, \hdots, 12\}}\right)}{\ell \in \{0, \hdots, 25\}}.
\]
\end{ex}

We would like to show that $T_{(q-1)/2}$ and $\Pi_q$ are indeed vector automorphisms of $\Phi_q$. One might hope that once we have fixed a primitive element $\alpha$ with corresponding companion matrix $C$, partitioning the vectors of $\Phi_q$ into sets parameterized by $0$, $\QR$ (i.e., $C^{\ell} e_1$ for $\ell$ even), and $\NR$ (i.e., $C^{\ell}e_1$ for $\ell$ odd) would allow us to do analysis as in the proof of Theorem~\ref{thm:Paleydbhm}.  However, the actual partitioning will rely on the image of these sets under a special invertible map.  Proposition~\ref{prop:taussky} and Lemma~\ref{lem:taussky} concern the existence of an invertible map with the desired properties. First, we note that in the fields pertinent for Paley ETFs, there will always be a normal basis consisting of quadratic residues.
\begin{lem}\label{lem:normbas}
Let $q \equiv 3 \bmod 4$ where $q = p^s$ for $p$ a prime.  Then, there exists a normal basis of $\bF_q$ which is the orbit of an $x \in \QR$, i.e., $\set{x^{p^i}}{i \in \{0, 1, \hdots, s-1\}}  \subset \QR$.
\end{lem}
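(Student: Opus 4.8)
The plan is to reduce the statement to finding a single quadratic residue that generates a normal basis, and then to manufacture one from an arbitrary normal generator by scaling. The first observation is that the Frobenius automorphism $\phi(x) = x^p$ maps $\QR(\bF_q)$ into itself: if $x = y^2$ then $x^p = (y^p)^2$. Consequently, if $x \in \QR$ its entire Frobenius orbit $\set{x^{p^i}}{i \in \{0, \hdots, s-1\}}$ lies in $\QR$, so it suffices to exhibit one $x \in \QR$ whose orbit is an $\bF_p$-basis of $\bF_q$.

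By the classical normal basis theorem, there is some $w \in \bF_q$ generating a normal basis. If $w \in \QR$ we are done, so suppose $w \in \NR$. I would then pick a scalar $c \in \bF_p^\times$ that is a non-residue in $\bF_q$ and set $x = cw$; since $\NR = -\QR$ forces the product of two non-residues to be a residue, $x \in \QR$. The point is that scaling by an element of the prime field does not disturb the normal-basis property: because $c \in \bF_p$ is fixed by Frobenius, $(cw)^{p^i} = c\, w^{p^i}$, so the orbit of $x$ is exactly $c$ times the orbit of $w$, and multiplying a basis by a fixed nonzero scalar again yields a basis. Hence $x$ is a quadratic residue generating a normal basis, and by the first paragraph its whole orbit lies in $\QR$.

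The crux is the existence of the scalar $c \in \bF_p^\times \cap \NR(\bF_q)$, which is where the hypothesis $q \equiv 3 \bmod 4$ (equivalently $p \equiv 3 \bmod 4$ with $s$ odd) enters. For $c \in \bF_p^\times$ I would compute $c^{(q-1)/2}$ by factoring $(q-1)/2 = \tfrac{p-1}{2}\bigl(1 + p + \cdots + p^{s-1}\bigr)$, so that $c^{(q-1)/2} = \bigl(c^{(p-1)/2}\bigr)^{1 + p + \cdots + p^{s-1}}$. Since $c^{(p-1)/2} = \pm 1$ and the exponent $1 + p + \cdots + p^{s-1}$ is a sum of $s$ odd numbers with $s$ odd, hence odd, this collapses to $c^{(q-1)/2} = c^{(p-1)/2}$. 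Thus $c$ is a residue in $\bF_q$ exactly when it is a residue in $\bF_p$, and any non-residue of $\bF_p$ (which exists because $p$ is odd) furnishes the desired $c$. I expect this parity computation to be the only genuinely delicate point; the rest is bookkeeping with the normal basis theorem and the index-two subgroup structure of $\QR$ in $\bF_q^\times$.
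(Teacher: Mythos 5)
Your proof is correct, and its skeleton is the same as the paper's: invoke the normal basis theorem, and if the generator $w$ is a quadratic non-residue, repair it by multiplying by a non-residue scalar that Frobenius fixes, noting that scaling by a nonzero prime-field constant preserves both the basis property and the Galois-orbit structure. The only difference is the choice of scalar. The paper simply takes $c = -1$: the hypothesis $q \equiv 3 \bmod 4$ is exactly the statement that $-1 \in \NR(\bF_q)$ (equivalently $\NR = -\QR$, recorded in the paper just before the lemma), so $-w \in \QR$ and $(-w)^{p^i} = -w^{p^i}$ finishes the argument with no further computation. Your Euler-criterion calculation --- that $c^{(q-1)/2} = c^{(p-1)/2}$ for $c \in \bF_p^\times$ because $1 + p + \cdots + p^{s-1}$ is a sum of $s$ odd terms with $s$ odd --- is correct and proves something slightly more general (any non-residue of $\bF_p$ stays a non-residue in an odd-degree extension), but it is an avoidable detour here: you even quote $\NR = -\QR$ when verifying $cw \in \QR$, and that same fact already hands you $c = -1$ as the scalar you need. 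What your route buys is a reusable criterion identifying $\QR(\bF_q) \cap \bF_p^\times$ with $\QR(\bF_p)$; what the paper's buys is brevity.
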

\begin{proof}
By the normal basis theorem (see, e.g.,~\cite{artin1959galois}), $\bF_q$ must have at least one normal basis, which clearly cannot be the orbit of $0$.  Consider a normal basis $\set{x^{p^i}}{i \in \{0, 1, \hdots, s-1\}}$ which is the orbit of a quadratic non-residue, $x$. Since $q \equiv 3 \bmod 4$, $-x$ is a quadratic residue.  As $\QR$ forms a subgroup of $\bF_q^\times$, the orbit of $-x$ under the Galois group must only consist of quadratic residues.  Also, since for each $i \in \{0, 1, \hdots, s-1\}$, $(-x)^{p^i} = (-1)^{p^i} x^{p^i} = -x^{p^i}$, the orbit of $-x$ under the Galois group must be linearly independent and also form a basis for $\bF_q$.
\end{proof}

\begin{prop}\label{prop:taussky}
Let $M\in\bF^{s\times s}$ be similar to a companion matrix of a separable irreducible polynomial $\pi \in \bF[X]$. Further, let $(f_k)_{k=0}^{s-2}$ be $s-1$ linear functionals on $\bF^{s\times s}$.  Then there exists an invertible symmetric matrix $S\in\bF^{s\times s}$ such that the following hold
\begin{enumerate}
\item $S M = M^\top S$ and
\item $f_k(S) = 0$ for all $k \in \{0, \hdots, s-2\}$.
\end{enumerate}
\end{prop}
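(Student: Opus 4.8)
The plan is to reduce the existence statement to a linear-algebra problem and then count dimensions. The key structural fact I would use is that since $M$ is similar to the companion matrix of a separable irreducible polynomial $\pi$ of degree $s$, the matrix $M$ is \emph{nonderogatory}: its minimal polynomial equals its characteristic polynomial $\pi$. The commutant $\set{A \in \bF^{s\times s}}{AM = MA}$ of a nonderogatory matrix is exactly the algebra of polynomials in $M$, which has dimension $s$ over $\bF$. This is the classical fact underlying the whole argument, and I would invoke it first.

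Next I would translate condition (1). The requirement $SM = M^\top S$ says precisely that $S$ intertwines $M$ and its transpose. Since $M$ and $M^\top$ are always similar, such intertwiners exist; moreover, fixing any one invertible solution $S_0$ (and one always exists because similarity of $M$ and $M^\top$ can be realized by a symmetric matrix, a Taussky--Zassenhaus-type fact), every solution $S$ of $SM = M^\top S$ is of the form $S = P S_0$ where $P$ ranges over the commutant of $M^\top$, equivalently $S = S_0 Q$ with $Q$ in the commutant of $M$. Because the commutant is $s$-dimensional, the space $\cW := \set{S \in \bF^{s\times s}}{SM = M^\top S}$ is an $s$-dimensional subspace of $\bF^{s\times s}$. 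I would then argue that \emph{every} element of $\cW$ is automatically symmetric: writing $S = S_0\, r(M)$ for a polynomial $r$, one checks $S^\top = r(M)^\top S_0^\top = r(M^\top) S_0$, and using $S_0 M = M^\top S_0$ repeatedly to push $r(M^\top) = S_0 r(M) S_0^{-1}$ gives $S^\top = S_0 r(M) = S$. So symmetry is free, and $\cW$ is an $s$-dimensional space of symmetric matrices all satisfying (1).

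The final step handles condition (2) by dimension counting. The $s-1$ linear functionals $f_0, \hdots, f_{s-2}$ cut out a subspace of $\cW$ of dimension at least $s - (s-1) = 1$, so there is a nonzero $S \in \cW$ with $f_k(S) = 0$ for all $k$. The remaining obligation is invertibility, and \textbf{this is the main obstacle}: the single guaranteed nonzero solution could a priori be singular. To rule this out I would use that the singular matrices in $\cW$ form a proper algebraic subset (a hypersurface), since $S_0$ itself is invertible so $\det$ does not vanish identically on $\cW$; intersecting the $\geq 1$-dimensional solution space of (2) with the complement of $\{\det = 0\}$ must be handled carefully over a finite field, where proper subvarieties need not miss every line through the origin. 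The clean resolution is to show the solution space of (2) inside $\cW$ has dimension exactly $1$ and that $\det$ restricted to $\cW \cong \bF[X]/(\pi)$ is (up to a unit) the field norm, which is nonzero on every nonzero element because $\pi$ is irreducible; hence \emph{every} nonzero $S \in \cW$ is invertible, and any nonzero solution of (2) does the job. Verifying that $\det$ on $\cW$ corresponds to the norm form on $\bF[X]/(\pi)$ is the technical crux I would need to nail down.
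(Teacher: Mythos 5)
Your proposal is correct and follows the same skeleton as the paper's proof: both identify the solution space of (1) as $S_0\,\bF[M]$, where $\bF[M]$ is the $s$-dimensional algebra of polynomials in $M$ (the paper cites Taussky--Zassenhaus both for this structure and for the automatic symmetry of all solutions, which you instead derive directly from a symmetric invertible $S_0$); both obtain a nonzero solution of (1)--(2) from the count $s-(s-1)=1$; and both close the argument by showing that \emph{every} nonzero solution of (1) is invertible---you correctly recognized that over a finite field one cannot simply argue a generic element of the solution space is invertible. The one divergence is how that last claim is proved. The paper passes to a splitting field $\mathbb{K}$ of $\pi$: by separability $M$ diagonalizes over $\mathbb{K}$, and since $\pi$ is irreducible no nonzero $r\in\bF[X]$ of degree $<s$ vanishes at a root of $\pi$, so $r(M)$ has no zero eigenvalues. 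Your route via the norm form also works, but the ``technical crux'' you flag (identifying $\det$ on $\cW$ with the field norm) can be bypassed entirely: since $\pi$ is irreducible and equals the minimal polynomial of $M$, the map $r+(\pi)\mapsto r(M)$ is an $\bF$-algebra isomorphism from the field $\bF[X]/(\pi)$ onto $\bF[M]$, so any nonzero $r(M)$ with $\deg r<s$ has an inverse $g(M)$ inside this algebra, which is in particular a matrix inverse; no determinant computation is needed (though the identification you propose is also standard, as $M$ is similar to multiplication by $X$ on $\bF[X]/(\pi)$, whence $\det r(M)$ is the norm of $r$). Note also that you do not need the solution space of (2) inside $\cW$ to be exactly one-dimensional; dimension at least one suffices once every nonzero element of $\cW$ is known to be invertible.
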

\begin{proof}
Since $M$ is similar to a companion matrix, Theorems 1 \& 2  of~\cite{taussky1959similarity} yield that there is a symmetric solution to (1.)\ and that all solutions to (1.)\ are symmetric.  The proof of these theorems in~\cite{taussky1959similarity} shows that the space of solutions to (1.) is
\[
\set{S_0\left(a_0I_{s} +a_1 M + \hdots + a_{s-1} M^{s-1} \right)}{ a_0, a_1, \hdots, a_{s-1} \in \bF}
\] 
for $S_0$ an invertible matrix which is also a solution to (1.). First note that this structure means that the solutions to (1.)\ form an $s$-dimensional vector space over $\bF$. Combining this with the fact that (2.)\ is a system of $s-1$ homogeneous linear equations yields that the solution set to the system of (1.)\ and (2.)\ must have dimension at least $s - (s-1) = 1$. 
We now need to show that there is an invertible element of the solution set.  We will use the additional restrictions on the minimal polynomial of $M$ to show that every non-zero solution to (1.)\ is invertible.  This is equivalent to showing that, other than the zero matrix, the vector space of degree at-most $s-1$ polynomials in $\bF[X]$ evaluated at $M$ consists of only invertible matrices. Consider a field extension $\mathbb{K}$ of $\mathbb{F}$ that contains the roots of $\pi$. Since $\pi$ is separable over $\mathbb{K}$, $M$ is similar to a diagonal Jordan block. Further, since $\pi$ is irreducible, no non-zero polynomial of degree $< s$ with coefficients in $\bF$ evaluated on the roots of $\pi$ are zero; thus, no such polynomial evaluated on the diagonalized $M$ will yield any zeros on the diagonal.  Hence, the evaluation is invertible over $\mathbb{K}$ and thus over $\mathbb{F}$, as desired.
\end{proof}
The above proposition helps to characterize the exponents in the entries of the Paley ETFs in a way that will allow us to analyze the actions of $T_{(q-1)/2}$ and $\Pi_q$. 
\begin{lem}\label{lem:taussky}
Let $q=p^s \equiv 3 \bmod 4$ be a prime power. Let $\pi \in \bF_p[X]$ be the polynomial used to generate $\bF_q$, i.e., $\bF_q \cong\bF_p[X]/(\pi(X))$. If $C \in \bF_p^{s \times s}$ is the companion matrix of $\pi$ and $e_1 \in \bF_p^s$ is the first standard basis vector, there exists an invertible symmetric $S \in \bF_p^{s\times s}$ such that
\begin{enumerate}
\item $\ip{C^{2k}e_1}{SC^{2\ell}e_1}_p = \ip{C^{2(k+n)}e_1}{SC^{2(\ell-n)}e_1}_p$ for all $k, \ell, n \in \{0, \hdots, (q-3)/2\}$ and
\item $\ip{C^{2k}e_1}{SC^{2\ell}e_1}_p = \ip{C^{2pk}e_1}{SC^{2p\ell}e_1}_p$ for all $k, \ell \in \{0, \hdots, (q-3)/2\}$.
\end{enumerate}
\end{lem}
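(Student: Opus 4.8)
The plan is to exhibit an explicit $S$, namely the Gram matrix of the (symmetric, nondegenerate) \emph{trace form} on $\bF_q$. Concretely, let $B(x,y) = \operatorname{Tr}_{\bF_q/\bF_p}(xy)$ be the $\bF_p$-bilinear trace form on $\bF_q$, and let $S \in \bF_p^{s\times s}$ be its Gram matrix with respect to the primitive element basis $\{\alpha^0,\dots,\alpha^{s-1}\}$, i.e.\ $S_{ij} = \operatorname{Tr}_{\bF_q/\bF_p}(\alpha^{i+j})$ for $i,j \in \{0,\dots,s-1\}$. The key structural fact is that $C^m e_1$ is exactly the coordinate vector of $\alpha^m$ in the primitive element basis (as in Example~\ref{ex:333b}); since $S$ is the Gram matrix of $B$, this gives, for all exponents $m,m'$, the identity $\ip{C^m e_1}{S C^{m'} e_1}_p = B(\alpha^m, \alpha^{m'}) = \operatorname{Tr}_{\bF_q/\bF_p}(\alpha^{m+m'})$.

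This reduces both claimed identities to trivial properties of the trace. First I would record the single formula $\ip{C^{2k}e_1}{S C^{2\ell}e_1}_p = \operatorname{Tr}_{\bF_q/\bF_p}(\alpha^{2(k+\ell)})$, valid for all $k,\ell$. Claim (1) is then immediate because the right-hand side depends only on the \emph{sum} of the indices: replacing $(k,\ell)$ by $(k+n,\ell-n)$ leaves $k+\ell$, and hence the trace, unchanged. Claim (2) follows from the Frobenius-invariance of the trace: since $\alpha^{2p(k+\ell)} = \phi(\alpha^{2(k+\ell)})$ for the Frobenius automorphism $\phi(z)=z^p$, and $\operatorname{Tr}_{\bF_q/\bF_p}\circ\,\phi = \operatorname{Tr}_{\bF_q/\bF_p}$, we get $\ip{C^{2pk}e_1}{S C^{2p\ell}e_1}_p = \operatorname{Tr}_{\bF_q/\bF_p}(\alpha^{2p(k+\ell)}) = \operatorname{Tr}_{\bF_q/\bF_p}(\alpha^{2(k+\ell)})$, which is again the left-hand side.

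It then remains only to check that $S$ is symmetric and invertible. Symmetry is clear since $S_{ij} = \operatorname{Tr}_{\bF_q/\bF_p}(\alpha^{i+j}) = S_{ji}$, and invertibility is exactly nondegeneracy of the trace form, which holds because $\bF_q/\bF_p$ is separable. I expect essentially no obstacle along this route: the entire difficulty has been front-loaded into the \emph{choice} of $S$, and the trace form is tailored precisely so that the multiplicative index shift in (1) and the Frobenius twist in (2) both act trivially on $\operatorname{Tr}_{\bF_q/\bF_p}(\alpha^{2(k+\ell)})$.

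For comparison, one could instead argue abstractly, and this is where a real obstacle would appear. The condition $S C^2 = (C^2)^\top S$ is exactly condition (1) of Proposition~\ref{prop:taussky} applied to $M = C^2$ (which is similar to the companion matrix of the separable irreducible minimal polynomial of $\alpha^2$, of degree $s$, using that $s$ is odd so that $\alpha^2$ still generates $\bF_q$ over $\bF_p$), and this condition is equivalent to Claim (1); meanwhile Claim (2) amounts to $F^\top S F = S$, where $F$ is the matrix of $\phi$ and one uses $F C^{2k}e_1 = C^{2pk}e_1$ together with the fact that $\set{C^{2\ell}e_1}{\ell}$ spans $\bF_p^s$. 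One then checks that conjugation $S \mapsto F^\top S F$ preserves the $s$-dimensional solution space of (1) and, being a semilinear operator of order $s$ over the field $\bF_p[C^2] \cong \bF_q$, has a nonzero fixed vector by a Hilbert~90 / Galois-descent argument. The one genuine difficulty there—nonvanishing of the fixed space—is exactly what the explicit trace form sidesteps; indeed the trace-form $S$ is the (up to $\bF_p$-scaling unique) Frobenius-fixed element of that solution space.
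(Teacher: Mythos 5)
Your proof is correct, and it takes a genuinely different route from the paper's. The paper argues by abstract existence: Proposition~\ref{prop:taussky} (via the Taussky--Zassenhaus theorem) shows that the symmetric solutions of $SC=C^\top S$ form an $s$-dimensional $\bF_p$-space in which every nonzero element is invertible; this space is then cut down by the $s-1$ homogeneous linear conditions $e_1^\top S\bigl(C^{2bp^k}-C^{2bp^{k+1}}\bigr)e_1=0$, $k\in\{0,\dots,s-2\}$, built from a normal basis of quadratic residues (Lemma~\ref{lem:normbas}, which is exactly where $q\equiv 3\bmod 4$ enters), and properties (1.)\ and (2.)\ are then verified by expanding $C^{2(k+\ell)}e_1$ in that normal basis. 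You instead write the solution down explicitly as the Gram matrix of the trace form in the primitive element basis, $S_{ij}=\operatorname{Tr}_{\bF_q/\bF_p}(\alpha^{i+j})$, so that $\ip{C^{m}e_1}{SC^{m'}e_1}_p=\operatorname{Tr}_{\bF_q/\bF_p}(\alpha^{m+m'})$ for all integers $m,m'$ (using, as in Example~\ref{ex:333b}, that $C^me_1$ is the coordinate vector of $\alpha^m$); then (1.)\ is immediate because the right-hand side depends only on $m+m'$, (2.)\ is Galois-invariance of the trace, symmetry is manifest, and invertibility is nondegeneracy of the trace form for the separable extension $\bF_q/\bF_p$. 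Your route is shorter and fully explicit, it makes Proposition~\ref{prop:taussky} and Lemma~\ref{lem:normbas} unnecessary for this lemma, and it proves a stronger statement, since neither $q\equiv 3\bmod 4$ nor primitivity of $\alpha$ is ever used. What the paper's route buys is the structural description of the full solution space of $SC=C^\top S$ as a rank-one module over $\bF_p[C]\cong\bF_q$ all of whose nonzero elements are invertible; your closing remark that the trace form is precisely the Frobenius-compatible element of that space, unique up to $\bF_p$-scaling, correctly ties the two proofs together and is corroborated by the paper itself: the matrix exhibited in Example~\ref{ex:333d} for $\bF_{27}$ is exactly $-1$ times your trace-form Gram matrix $\bigl(\operatorname{Tr}(\alpha^{i+j})\bigr)_{i,j}=\left(\begin{smallmatrix}0&0&2\\0&2&0\\2&0&2\end{smallmatrix}\right)$.
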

\begin{proof}
Since $q \equiv 3 \bmod 4$, Lemma~\ref{lem:normbas} guarantees the existence of a basis consisting of the Galois orbit of a quadratic residue.  Let $x  = C^{2 b} e_1$ denote one such quadratic residue which generates a normal basis.
Further, since $C$ is a companion matrix for a separable irreducible polynomial, Proposition~\ref{prop:taussky} shows the existence of an invertible symmetric $S \in \bF^{s\times s}$ which solves the system
\begin{enumerate}
\item[1'.] $S C = C^\top S$ and
\item[2'.] $e_1^\top S \left(C^{2bp^k}-C^{2bp^{k+1}}\right) e_1 = 0$ for all $k \in \{0, \hdots, s-2\}$.
\end{enumerate}
We seek to show that this $S$ also solves (1.)\ and (2.). First, (1'.)\ implies that for all  $k, \ell, n \in \{0, \hdots, (q-3)/2\}$
\begin{align*}
\left(C^{2k}\right)^\top SC^{2\ell} &= \left(C^{2(k+n)}\right)^\top \left(C^{-2n}\right)^\top SC^{2\ell} \\
&= \left(C^{2(k+n)}\right)^\top  S\left(C^{-2n}\right)C^{2\ell} \\
&= \left(C^{2(k+n)}\right)^\top  S C^{2(\ell-n)}\\
&\hspace{-55pt}\Rightarrow e_1^\top \left(C^{2k}\right)^\top SC^{2\ell} e_1 = e_1^\top \left(C^{2(k+n)}\right)^\top  S C^{2(\ell-n)} e_1  \\
&\hspace{-55pt}\Rightarrow \ip{C^{2k}e_1}{SC^{2\ell}e_1}_p = \ip{C^{2(k+n)}e_1}{SC^{2(\ell-n)}e_1}_p; 
\end{align*}
that is, $S$ solves (1.).  Now we use (1.')\ and (2.')\ to show that (2.)\ holds.  First note that (2.')\ yields
\[
e_1^\top S C^{2b}e_1= e_1^\top S C^{2bp}e_1= \hdots = e_1^\top S C^{2bp^{s-1}}e_1.
\]
Let $k, \ell \in \{0, \hdots, (q-3)/2\}$ and set $y = C^{2 (k+\ell)}e_1$.  Since $\set{x^{p^n} = C^{2bp^n}e_1}{n \in \{0, \hdots, s-1\}}$ is a basis for $\bF_{q}$ as a vector space over $\bF_p$, there exists $c_n \in \bF_p$ such that $y =\sum_{n=0}^{s-1} c_n x^{p^n}$.  As the Frobenius automorphism fixes $\bF_p$, we have $y^p = \sum_{n=0}^{s-1} c_n x^{p^{n+1}}$.  Rewriting this using vector space notation, we have 
\[
C^{2 (k+\ell)}e_1=\sum_{n=0}^{s-1} c_n C^{2bp^n}e_1 \quad \textrm{and} \quad C^{2p (k+\ell)}e_1=\sum_{n=0}^{s-1} c_n C^{2bp^{n+1}}e_1.
\]
Now, using (1.') and (2.'), we have that 
\begin{align*}
\ip{C^{2k}e_1}{SC^{2\ell}e_1}_p  &= e_1^\top \left(C^{2k}\right)^\top S C^{2\ell} e_1 \\
&= e_1^\top S C^{2(k+\ell)} e_1 \\
&= e_1^\top S \sum_{n=0}^{s-1} c_n C^{2bp^n}e_1 \\
&= \sum_{n=0}^{s-1} c_n e_1^\top S  C^{2bp^n}e_1 \\
&= \sum_{n=0}^{s-1} c_n e_1^\top S  C^{2bp^{n+1}}e_1 \\
&= e_1^\top S C^{2p(k+\ell)} e_1 \\
&=\ip{C^{2pk}e_1}{SC^{2p\ell}e_1}_p,
\end{align*}
as desired.
\end{proof}

\begin{ex}\label{ex:333d}
Continuing Examples~\ref{ex:333a},~\ref{ex:333b}, and~\ref{ex:333c}, let 
\[
S = \begin{pmatrix} 0 & 0 & 1\\ 0 & 1 & 0 \\ 1 & 0 & 1 \end{pmatrix}.
\]
Note, e.g., that
\[
C^\top S = S C = \begin{pmatrix} 0 & 1 &  0 \\ 1 &  0 & 1 \\ 0 & 1 & 2\end{pmatrix} \quad \textrm{and} \quad \ip{C^2 e_1}{S C^4 e_1}_3 = \ip{C^6 e_1}{S C^{12} e_1}_3 =1.
\]
This $S$ is indeed a solution to the system in Lemma~\ref{lem:taussky}.
\end{ex}

We are now ready to present the main theorem of this section.

\begin{thm}\label{thm:Paleydbhm2}
Let $q=p^s>3$ be a prime power with $q \equiv 3 \bmod 4$.   Define $M^{\{b\}}= M^{\{b\}}_{\{q\}}$, $T=T_{(q-1)/2}$, and $\Pi = \Pi_{q}$.  Then
\begin{align*}
\lefteqn{G:= \set{M^{\{b\}} T^{\ell} \Pi^k }{b \in\bF_q, \, \, \ell \in \{ 0, \,\hdots,\, (q-3)/2\},\,\, k \in \{0, \, \hdots,\, s-1\}}}\\
 &\cong \left(\left(\bZ_p\right)^s \rtimes \bZ_{(q-1)/2}\right) \rtimes \bZ_s \cong \set{f \in \operatorname{A\Gamma L}(\bF_q)}{f(x) = mx^\sigma+b, \,\, b \in \bF_q,\,\, m \in \QR(\bF_q),\,\,\sigma \in \operatorname{Aut}(\bF_q)}.
\end{align*}
is the group of vector automorphisms of the Paley ETF $\Phi_q$. The vector and line symmetry groups are $\Sym_v(\Phi_q) = \Sym_{\ell}(\Phi_q) \cong G$. The \emph{symmetry groups act} $2$-homogeneously but not $2$-transitively on the vectors and the lines.
\end{thm}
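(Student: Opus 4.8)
The plan is to mirror the proof of Theorem~\ref{thm:Paleydbhm}, realizing the three families of operators as vector automorphisms which, on a suitably relabeled index set $\bF_q$, induce the translations, the quadratic-residue dilations, and the Galois automorphisms generating the $\operatorname{A\Gamma L}(\bF_q)$-subgroup $\tilde G$, and then invoking Lemma~\ref{lem:AGamL}. The essential new ingredient is the invertible symmetric matrix $S$ from Lemma~\ref{lem:taussky}, which reconciles the multiplicative labeling of the rows by $\QR$ with the additive form $\ip{\cdot}{\cdot}_p$ appearing in the entries. First I would use $S$ to relabel the columns: since $S$ is an invertible linear map on $\bF_p^s \cong \bF_q$, the assignment $w \mapsto Sw$ merely permutes the column indices, so it suffices to analyze the matrix $\Psi_q$ whose $(x,w)$-entry is $\zeta^{\ip{x}{Sw}_p}$, where the rows are $x = C^{2k}e_1$ ranging over $\QR$. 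Working with the twisted form $B(x,w):=\ip{x}{Sw}_p = \ip{Sx}{w}_p$ is what makes the relevant actions transparent.

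Second, I would verify the generators are vector automorphisms by computing the column permutation each induces on $\Psi_q$. For $M^{\{b\}}$ this is immediate from the character-table remark following Definition~\ref{defn:opers2}: it induces the translation $w \mapsto w + S^{-1}b$, and these exhaust $(\bF_q,+)$. For $T$, a left cyclic shift of the rows replaces $x$ by $C^{-2}x$; pushing $C^{-2}$ across $B$ and using the intertwining $SC = C^\top S$ satisfied by $S$ shows that $T$ induces $w \mapsto C^{-2}w$, i.e.\ multiplication by the quadratic residue $\alpha^{-2}$. For $\Pi$, the row $x$ is replaced by $x^{p^{-1}}$; property (2.) of Lemma~\ref{lem:taussky} gives $B(x^p,w^p)=B(x,w)$ for $x,w \in \QR$, and since the quadratic residues span $\bF_q$ over $\bF_p$ (Lemma~\ref{lem:normbas}) while both sides are $\bF_p$-linear functions of $w$, this identity extends to all $w \in \bF_q$; it then yields that $\Pi$ induces the Frobenius map $w \mapsto w^p$. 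Hence $G \leq \Sym_v(\Phi_q)$, and reading off the induced permutations shows that $G$ is isomorphic to $\tilde G = \set{w \mapsto m w^\sigma + c}{m \in \QR,\, c \in \bF_q,\, \sigma \in \operatorname{Aut}(\bF_q)}$. Counting orders and tracking the commutation relations (the translations form a normal subgroup $(\bZ_p)^s$, normalized by the dilations via $m \cdot c \cdot m^{-1} = mc$, and both normalized by Frobenius) identifies $G \cong ((\bZ_p)^s \rtimes \bZ_{(q-1)/2}) \rtimes \bZ_s \cong \tilde G$.

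Third, transitivity and $2$-homogeneity follow exactly as in the prime case and use only the translations and dilations: the translations act transitively, and given two unordered pairs one translates a point of each to $0$, uses $\NR = -\QR$ (valid since $q \equiv 3 \bmod 4$) to arrange both nonzero differences to lie in $\QR$, and applies the unique dilation carrying one difference to the other. Finally, citing \cite{iverson2024doubly} that $\Sym_{\ell}(\Phi_q)$ is not $2$-transitive (consistent with the fact that the dilations and Frobenius preserve $\QR$, so $\tilde G$ cannot reverse a pair whose difference is a quadratic residue), Lemma~\ref{lem:AGamL} applies equally to $\Sym_v(\Phi_q)$ and $\Sym_{\ell}(\Phi_q)$: each is transitive, $2$-homogeneous, not $2$-transitive, and contains $\tilde G$, whence $\Sym_v(\Phi_q) = \Sym_{\ell}(\Phi_q) = \tilde G = G$.

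I expect the main obstacle to be the second step, and within it the treatment of $\Pi$: one must promote property (2.) of Lemma~\ref{lem:taussky} from quadratic residues to all of $\bF_q$ via the spanning-and-linearity argument, and one must check that the transpose arising when moving $C^{-2}$ (and the inverse-Frobenius matrix) across $B$ is exactly absorbed by $SC = C^\top S$. This is the precise point at which the specially constructed $S$ is indispensable, and where the naive partition of the vectors cautioned against in the text preceding Proposition~\ref{prop:taussky} would fail.
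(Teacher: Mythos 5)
Your proposal is correct and follows essentially the same route as the paper's proof: the same generators $M^{\{b\}}$, $T$, $\Pi$ are verified to be vector automorphisms via the matrix $S$ of Lemma~\ref{lem:taussky}, the same $2$-homogeneity argument is used, and the conclusion is reached by citing \cite{iverson2024doubly} for non-$2$-transitivity and invoking Lemma~\ref{lem:AGamL}. Your only departure is presentational: you relabel the columns by $S$ and track the twisted bilinear form $B(x,w)=\ip{x}{Sw}_p$, whereas the paper keeps the original labels and partitions the column indices into $\{0\}$, $S(\QR)$, and $S(\NR)$ --- these are the same computation.
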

\begin{proof}
We first claim that the modulation group, the cyclic translation, and the Galois permutation are all vector automorphisms of $\Phi_q$ and that $\{M^{\{b\}} T^{\ell} \Pi^k \}$ is isomorphic to the desired subgroup of $\operatorname{A\Gamma L}(\bF_q)$. As in Proposition~\ref{prop:palorbit}, the vectors of $\Phi_q$ are trivially the orbit of $\varphi_0 = \mathbbm{1}$ under the modulation group.  Note that this also implies that $\Sym_v(\Phi_q) \leq \Sym_{\ell}(\Phi_q)$ act transitively on the vectors and lines, respectively.  

Let $S$ be a matrix defined as in Lemma~\ref{lem:taussky} and $e_1 \in \bF_p^s$ be the first standard basis vector.  Since $S$ is linear and invertible, we may partition the indices of the vectors of $\Phi_q$ as $0$, the image of the quadratic residues under $S$, i.e.,
\[
S(\QR) = \set{SC^{2\ell}e_1}{\ell \in \{0, \hdots, (q-3)/2\}},
\]
and the image of the quadratic non-residues under $S$, i.e.,
\[
S(\NR) = S(-\QR) = - S(\QR).
\]
We trivially have that $T \mathbbm{1} = \mathbbm{1}$.  We claim that left-multiplying by $T$ has the effect of cyclically permuting the frame vectors indexed by $S(\QR)$ and also cyclically permuting the frame vectors indexed by $S(\NR)$.  Let $\zeta$ be the primitive $p$th root of unity used to generate $\Phi_q$. Note that the $k$th entry of the vector indexed by $SC^{2\ell}e_1$ is $\zeta^{\ip{C^{2k}e_1}{SC^{2\ell}e_1}_p}$.  It follows from Lemma~\ref{lem:taussky} that  $\ip{C^{2k}e_1}{SC^{2\ell}e_1}_p = \ip{C^{2(k+1)}e_1}{SC^{2(\ell-1)}e_1}_p$ for all $k, \ell \in \{0, \hdots, (q-3)/2\}$.  Thus, 
\[
T \varphi_{SC^{2\ell} e_1} =  \varphi_{SC^{2(\ell-1)} e_1} \quad \textrm{for all} \quad \ell \in \{0, \hdots, (q-3)/2\};
\]
i.e., $T$ cyclically permutes the vectors indexed by $S(\QR)$. Since $S$ is linear and $\NR = -\QR$, we also get that $\ip{C^{2k}e_1}{-SC^{2\ell}e_1}_p = \ip{C^{2(k+1)}e_1}{-SC^{2(\ell-1)}e_1}_p$ for all $k, \ell \in \{0, \hdots, (q-3)/2\}$, i.e., $T$ cyclically permutes the vectors indexed by $S(\NR)$. Since each $M^{\{b\}}$ is the diagonal matrix with diagonal entries $\varphi_b$, this means that $T^{\ell} M^{\{b\}} T^{-\ell} = M^{\{SC^{-2\ell} S^{-1}b\}}$.

Similarly, note that $\Pi \mathbbm{1} = \mathbbm{1}$.  We will show that left multiplying by $\Pi$ has the effect of moving a vector indexed by $Sx$ for some $x \in \bF_q$ to $Sx^p$.  Since the Frobenius automorphism maps quadratic residues to quadratic residues and quadratic non-residues to quadratic non-residues, we will again analyze $S(\QR)$ first. It follows from Lemma~\ref{lem:taussky} that  $\ip{C^{2k}e_1}{SC^{2\ell}e_1}_p = \ip{C^{2pk}e_1}{SC^{2p\ell}e_1}_p$ for all $k, \ell \in \{0, \hdots, (q-3)/2\}$. Thus, 
\[
\Pi \varphi_{SC^{2\ell} e_1} =  \varphi_{SC^{2p\ell} e_1} \quad \textrm{for all} \quad \ell \in \{0, \hdots, (q-3)/2\};
\]
i.e., $\Pi$ maps $Sx$ to $Sx^p$ for any $x \in \QR$.  Similarly, $\Pi$ maps $Sx$ to $Sx^p$ for any $x \in \NR$.  Using these calculations, we may compute the remaining commutator relations: $\Pi^k M^{\{b\}} \Pi^{-k} = M^{\{S(S^{-1}b)^{p^k}\}}$ and $\Pi^k T^{\ell} \Pi^{-k} = T^{p^k \ell}$.

We would like to show that 
\[
\set{M^{\{b\}} T^{\ell} \Pi^k }{b \in\bF_q, \, \, \ell \in \{ 0, \,\hdots,\, (q-3)/2\},\,\, k \in \{0, \, \hdots,\, s-1\}}
\]
has the desired structure. Tracking the permutation induced on the preimage of the vector indices under $S$ and using the commutation relations above, we see that the product of two operators of the form $M^{\{b\}} T^{\ell} \Pi^k$ yields the product
\[
(b, \,\ell,\, k) (\tilde{b}, \,\tilde{\ell},\, \tilde{k}) = \left(b + \alpha^{-2\ell} \tilde{b}^{p^k}, \,\ell +p^k \tilde{\ell}, \,k + \tilde{k}\right),
\]
which is isomorphic to 
\[
(b, \,\alpha^{2\ell},\, p^k) (\tilde{b}, \,\alpha^{2\tilde{\ell}},\, p^{\tilde{k}}) = \left(b + \alpha^{-2\ell} \tilde{b}^{p^k}, \,\alpha^{2\ell} \left(\alpha^{2\tilde{\ell}}\right)^{p^k}, \,p^k p^{\tilde{k}}\right),
\]
which in turn is isomorphic to 
\[
\left(\left(\bZ_p\right)^s \rtimes \bZ_{(q-1)/2}\right) \rtimes \bZ_s \cong \set{f \in \operatorname{A\Gamma L}(\bF_q)}{f(x) = mx^\sigma+b, \,\, b \in \bF_q,\,\, m \in \QR(\bF_q),\,\,\sigma \in \operatorname{Aut}(\bF_q)}.
\]
Thus, $G \leq \Sym_v(\Phi_q) \leq  \Sym_{\ell}(\Phi_q)$.

The remainder of the proof is almost exactly like the end of the proof of Theorem~\ref{thm:Paleydbhm}.  Namely, consider $x \neq y$ and $z \neq w$ in $\bF_q$. Then either $S^{-1} (x-y)$ or $S^{-1}(y-x)$ is a quadratic residue, similarly for $(z,w)$. Thus, without loss of generality, we pick an ordering so that $S^{-1}(x-y), S^{-1}(z-w) \in \QR$. Then $M^{\{-y\}}\varphi_y = M^{\{-w\}}\varphi_w = \mathbbm{1}$, and $M^{\{-y\}}\varphi_x$ and $M^{\{-w\}}\varphi_z$ are both columns indexed by $S(\QR)$. We can then apply a translation $T^{\ell}$ as needed, which fixes $\mathbbm{1}$ and maps $M^{\{-w\}}\varphi_z$ to $M^{\{-y\}}\varphi_x$. Thus, $M^{\{y\}} T^{-\ell} M^{-\{w\}}$ maps $\{\varphi_z,\varphi_w\}$ to $\{\varphi_x,\varphi_y\}$. Hence, $\Sym_v(\Phi_q)  \leq  \Sym_{\ell}(\Phi_q)$ act $2$-homogeneously on the vectors and lines, respectively.

We know from~\cite{iverson2024doubly} that $\Sym_{\ell}(\Phi_q)$ is not $2$-transitive and thus neither is $\Sym_v(\Phi_q)$. It thus follows from Lemma~\ref{lem:AGamL} that $\Sym_v(\Phi_q) = \Sym_{\ell}(\Phi_q) \cong G$.
\end{proof}

 \begin{cor}\label{cor:BIBD}
 Let $q>3$ be a prime power with $q \equiv 3 \bmod 4$. The bender of the Paley ETF $\Phi_q$ is a BIBD.
 \end{cor}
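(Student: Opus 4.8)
The plan is to combine the $2$-homogeneity just established in Theorem~\ref{thm:Paleydbhm2} with the general design principle of Proposition~\ref{prop:benderdesign} specialized to $k=2$. By Theorem~\ref{thm:Paleydbhm2}, the line symmetry group $\Sym_{\ell}(\Phi_q)$ acts $2$-homogeneously on the lines spanned by the frame vectors. To invoke Proposition~\ref{prop:benderdesign} I only need the remaining hypothesis $\spark(\Phi_q) \geq 2$, and this is immediate: every vector of an ETF has the same strictly positive norm, hence is nonzero, so no singleton is a linearly dependent set. Proposition~\ref{prop:benderdesign} then yields that the bender of $\Phi_q$ is a $2$-design, which by the definition in Section~\ref{sec:combETF} is precisely a BIBD.

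The only point worth checking, and not a genuine obstacle, is that the resulting design is non-degenerate, i.e., that its replication count is strictly positive as the definition of a block design requires. First I would note that the bender is nonempty: the spark is a finite integer (bounded above by $d+1$), so at least one short circuit exists. Any such short circuit has size $\spark(\Phi_q) \geq 2$ and therefore contains at least one pair of indices, placing that pair in a positive number of blocks; the $2$-homogeneity of $\Sym_{\ell}(\Phi_q)$ then transfers this to every pair, so each pair lies in the same positive number of short circuits. Thus the corollary reduces entirely to the machinery already in place, and I expect no difficulty beyond this bookkeeping.
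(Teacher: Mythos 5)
Your proof is correct and takes essentially the same route as the paper, whose proof simply cites Theorem~\ref{thm:Paleydbhm2} together with Proposition~\ref{prop:benderdesign}. Your additional checks---that $\spark(\Phi_q)\geq 2$ since ETF vectors are nonzero, and that the bender is nonempty so the replication count is positive---are harmless bookkeeping that the paper leaves implicit.
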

 \begin{proof}
 This follows immediately from Theorem~\ref{thm:Paleydbhm2} and Proposition~\ref{prop:benderdesign} 
 \end{proof}
 
Due to a result from Chebotar\"ev~\cite{stevenhaben1996chebat}, the Paley ETFs $\Phi_p$ with $p$ prime are full spark, i.e., have spark $(p-1)/2 +1 = (p+1)/2$.  Their bender is just trivially the set of all subsets of size $(p+1)/2$, which is a $t$-design for all $1 \leq t \leq (p+1)/2$.  For a discussion of $p^s$ with $s >1$, see Question~\ref{quest:bender}.

\section{$k$-Homogeneous ETFs}\label{sec:homoETF}

In this section, we characterize $k$-homogeneous ETFs.  Throughout this section, we assume $d >1$ as otherwise $d>1$ would need to be added to the statement of almost every result. In particular, this means that the ETFs cannot trivially contain repeated elements.

The following result is generally known, but we include a proof here for completeness.
\begin{lem}\label{lem:sn}
The line symmetry group of orthonormal bases and simplex ETFs with $n$ vectors is $S_n$.  If an ETF is a regular simplex ETF or an orthonormal basis, then the vector symmetry group is also isomorphic to  $S_n$.
\end{lem}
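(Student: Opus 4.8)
The plan is to reduce the entire statement to Theorem~\ref{thm:TP}, which reads off both symmetry groups directly from the inner products and triple products of an ETF. First I would record that both families qualify as ETFs, so that the theorem applies: an orthonormal basis is a $(d,d)$-ETF (it is equiangular with $a=0$, $b=1$, and tight since $\Phi\Phi^\ast=I$, as already noted after the definition of simplex ETFs), while a simplex ETF is an ETF by definition. With $d>1$ in force throughout the section, we have $n=d\geq 2$ in the orthonormal case and $n=d+1\geq 3$ in the simplex case, so no degenerate small-$n$ situations arise.

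For an orthonormal basis, $\ip{\varphi_j}{\varphi_k}=0$ whenever $j\neq k$, so every inner product and every triple product vanishes. The two defining conditions of Theorem~\ref{thm:TP}, namely $\ip{\varphi_j}{\varphi_k}=\ip{\varphi_{\sigma(j)}}{\varphi_{\sigma(k)}}$ and $\TP(j,k,\ell)=\TP(\sigma(j),\sigma(k),\sigma(\ell))$, therefore hold identically for every $\sigma\in S_n$; hence $\Sym_v(\Phi)=\Sym_{\ell}(\Phi)=S_n$. For a regular simplex ETF, all inner products of distinct vectors equal a common negative real number $c$, so every triple product equals $c^3$; again both conditions hold for all $\sigma\in S_n$, giving $\Sym_v(\Phi)=\Sym_{\ell}(\Phi)=S_n$. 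This already settles the vector-symmetry claim (regular simplex ETFs and orthonormal bases) and the line-symmetry claim for regular simplex ETFs.

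It remains to upgrade the line-symmetry statement from regular to arbitrary simplex ETFs. Every simplex ETF $\Phi$ is switching equivalent to a regular simplex ETF $\Psi$ (as recorded after the definition of switching equivalence). By Proposition~\ref{prop:switch}, switching-equivalent ETFs have identical triple products $\TP(j,k,\ell)$ for each index triple; since Theorem~\ref{thm:TP} characterizes $\Sym_{\ell}$ purely in terms of these triple products, $\Sym_{\ell}$ is a switching invariant. Concretely, a switching $\psi_j=U\varphi_j\eta_j$ with $\eta_j\in\U(1)$ multiplies $\ip{\varphi_j}{\varphi_k}$ by the unimodular scalar $\eta_j\overline{\eta_k}$, and these factors telescope to $1$ around the $3$-cycle $j\to k\to\ell\to j$. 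Hence $\Sym_{\ell}(\Phi)=\Sym_{\ell}(\Psi)=S_n$ by the previous paragraph.

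The argument is essentially bookkeeping once Theorem~\ref{thm:TP} is in hand, and the only step requiring genuine care is the last one, where the passage from a regular to an arbitrary simplex ETF must be routed through a switching invariant rather than through the inner products directly. I would emphasize that the restriction to \emph{regular} simplex ETFs in the vector-symmetry statement is essential: for a non-regular simplex ETF the inner products of distinct pairs share a common modulus but may carry distinct phases, so a transposition need not preserve $\ip{\varphi_j}{\varphi_k}$, and $\Sym_v$ can then be a proper subgroup of $S_n$. This is precisely why switching equivalence rescues the line-symmetry claim for all simplex ETFs (triple products are phase-insensitive) but not the vector-symmetry claim (inner products are not).
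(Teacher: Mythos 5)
Your proof is correct, and its core mechanism is the same as the paper's: reduce everything to Theorem~\ref{thm:TP} by showing the relevant inner products or triple products are constants, hence invariant under every permutation. The differences lie in the auxiliary facts fed into that theorem. For the simplex line-symmetry claim, the paper cites the characterization from \cite{FJKM17} that a $(d,n)$-ETF has $n=d+1$ if and only if all triple products are negative (since they share the modulus $\abs{a}^3$, they are then all equal to $-\abs{a}^3$), whereas you route through the remark in Section~\ref{sec:ETFs} that every simplex ETF is switching equivalent to a regular simplex ETF, combined with the observation---via Proposition~\ref{prop:switch} or your telescoping-phases computation---that triple products, and hence by Theorem~\ref{thm:TP} the group $\Sym_{\ell}$, are switching invariants. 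For orthonormal bases the paper bypasses Theorem~\ref{thm:TP} entirely: any permutation of an orthonormal basis is again an orthonormal basis, so the unitary $U_2U_1^{-1}$ realizes it; your vanishing-inner-products argument is an equivalent bookkeeping alternative. What your route buys is self-containedness relative to facts already stated in the paper (no external citation for the simplex step) and an explicitly isolated, reusable principle---switching invariance of the line symmetry group; what the paper's route buys is brevity and a sharp ``if and only if'' statement about triple products. Note that neither route is more rigorous than the other: the switching equivalence of an arbitrary simplex ETF to a regular one is itself asserted in Section~\ref{sec:ETFs} without proof, so both arguments import exactly one standard unproved fact. Your closing remark on why regularity is essential for the vector-symmetry claim is consistent with the paper and correctly pinpoints the obstruction: vector symmetry sees the phases of individual inner products, which triple products (and hence line symmetry) cannot.
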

\begin{proof}
Given any two ordered orthonormal bases $U_1$ and $U_2$ of $\bC^n$, $U_2 U_1^{-1}$ is a unitary matrix that maps the basis $U_1$ to the basis $U_2$.  Thus, the automorphism group of the vectors and in turn the lines is $S_n$. A $(d,n)$-ETF has $n=d+1$ vectors if and only if all of the triple products are negative \cite{FJKM17}.  Thus, by Theorem~\ref{thm:TP}, any permutation of the lines is in the symmetry group. Finally, if the ETF is a regular simplex ETF, then the inner products are all negative and thus any permutation of the vectors is possible (see, e.g.,~\cite{ChWa16}).
\end{proof}
We also include the following converse.
\begin{lem}\label{lem:trip}
The only ETFs with $3$-transitive line symmetry groups are orthonormal bases and simplices.  The only ETFs with $2$-transitive vector symmetry groups are orthonormal bases and regular simplex ETFs. 
\end{lem}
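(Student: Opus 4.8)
The statement is the converse of Lemma~\ref{lem:sn}, so I would prove each of its two sentences separately, in both cases translating the symmetry hypothesis into an invariance property via Theorem~\ref{thm:TP} and then collapsing all the relevant inner/triple products to a single constant.

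For the first sentence, the plan is to observe that, by Theorem~\ref{thm:TP}, every $\sigma \in \Sym_{\ell}(\Phi)$ satisfies $\TP(\sigma(j),\sigma(k),\sigma(\ell)) = \TP(j,k,\ell)$. If $\Sym_{\ell}(\Phi)$ is $3$-transitive, then for any two ordered triples of distinct indices there is such a $\sigma$ carrying one to the other, forcing every ordered triple product to equal a single value $T$. Applying this to the transposition $(j,k,\ell)\mapsto(j,\ell,k)$ and using $\ip{\varphi_a}{\varphi_b}=\overline{\ip{\varphi_b}{\varphi_a}}$ gives $T = \TP(j,\ell,k) = \overline{\TP(j,k,\ell)} = \overline{T}$, so $T$ is real. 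Since every off-diagonal inner product of an ETF has the same modulus $c$, we have $\abs{T}=c^3$. If $c=0$ then $\Phi$ is an orthonormal basis. Otherwise $T=\pm c^3\neq 0$: when $T=-c^3<0$ all triple products are negative, so $\Phi$ is a simplex by the characterization from \cite{FJKM17} cited in Lemma~\ref{lem:sn}. The case $T=+c^3>0$ I would rule out via a Naimark complement $\Psi$, an $(n-d,n)$-ETF: since the distinct inner products of $\Psi$ are the negatives of those of $\Phi$, every triple product of $\Psi$ equals $-T<0$, so $\Psi$ is a simplex, forcing $n=(n-d)+1$ and hence $d=1$, contradicting the standing assumption $d>1$.

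For the second sentence, Theorem~\ref{thm:TP} says each $\sigma\in\Sym_v(\Phi)$ preserves inner products, so $2$-transitivity forces all off-diagonal inner products to equal one value $w$, and applying it to a transposed pair together with Hermitian symmetry gives $w=\overline{w}$, i.e.\ $w\in\bR$. Writing $b=\norm{\varphi_j}^2$ (common to all $j$), the Gram matrix is $G=(b-w)I_n+wJ$ with $J$ the all-ones matrix, whose eigenvalues are $b-w$ (multiplicity $n-1$) and $b+(n-1)w$ (multiplicity $1$). Since $\Phi$ is a tight frame, $G$ has eigenvalue $A$ with multiplicity $d$ and $0$ with multiplicity $n-d$. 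Matching the two descriptions: if $w=0$ the frame is an equal-norm orthogonal basis; if $w\neq 0$ the two eigenvalues are distinct and $\Phi$ is not an orthonormal basis, so $n>d$ and $G$ is singular. Then $b-w=0$ would force the zero eigenvalue to have multiplicity $n-1$, i.e.\ $d=1$ (excluded), whereas $b+(n-1)w=0$ forces $n-d=1$ and $w=-b/(n-1)<0$, which is precisely a regular simplex ETF.

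I expect the only genuinely subtle point to be eliminating the uniformly-positive triple-product case in the first part: its direct geometric meaning is unclear, and the clean resolution is the Naimark-complement trick, which reduces it to the already-handled negative case and then collides with $d>1$. Everything else is bookkeeping with Theorem~\ref{thm:TP} and the spectrum of $(b-w)I_n+wJ$; the only care needed is to confirm that the degenerate spectral coincidences ($n=d$ giving $w=0$, and $b-w=0$ giving $d=1$) correspond exactly to orthonormal bases and to the excluded one-dimensional case, respectively.
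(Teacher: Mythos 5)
Your proposal is correct, but it takes a genuinely different route from the paper, whose proof of this lemma is essentially outsourced: the first sentence is dispatched by citing Theorem II.6 of \cite{King19}, and the second by the one-line observation that $2$-transitivity of the vector symmetry group maps any inner product of distinct vectors to any other, with the remaining deduction left implicit. Your argument makes both halves self-contained within the paper's own toolkit. For the first sentence you combine Theorem~\ref{thm:TP} with ordered $3$-transitivity to force a single triple-product value $T$, use the transposition $(j,k,\ell)\mapsto(j,\ell,k)$ to get $T=\overline{T}$, and split on the sign: $T=0$ gives an orthonormal basis, $T<0$ gives a simplex by the \cite{FJKM17} characterization quoted in the proof of Lemma~\ref{lem:sn}, and $T>0$ is eliminated by passing to a Naimark complement, whose triple products equal $-T<0$, forcing it to be a simplex and hence $d=1$, which is excluded by the section's standing assumption. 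That Naimark trick is the one genuinely new idea relative to the paper, and it also keeps your proof more elementary than the closely related Lemma~\ref{lem:3hom}, which handles the weaker $3$-homogeneity hypothesis but must invoke the $3_c$-uniform classification of \cite{hoffman2012complex}; your route buys self-containedness at the cost of redoing what \cite{King19} already provides. For the second sentence, your spectral computation with $G=(b-w)I_n+wJ$, matched against the tight-frame spectrum $\{A,0\}$ with multiplicities $\{d,n-d\}$, is exactly the bookkeeping the paper's one-liner omits; the only superfluous step is deducing $w=\overline{w}$ from a transposition, since once all off-diagonal inner products equal $w$, Hermitian symmetry of the Gram matrix already forces $w$ to be real.
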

\begin{proof}
The first result is Theorem II.6 in \cite{King19}. The second comes from the fact that if the vector symmetry group is $2$-transitive, then any inner product of distinct vectors is mapped to any other inner product. 
\end{proof}
Note that this gives an alternate proof that the vector symmetry group of the prime order Paley ETFs $\Phi_p$ is not $2$-transitive. We note that the following immediately follows from the definition of homogeneity, but we record it here as we will use it frequently.
\begin{lem}\label{lem:homcomp}
If a group acts $k$-homogeneously on a set of size $n$, then it also acts $(n-k)$-homogeneously.
\end{lem}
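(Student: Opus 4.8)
The plan is to reduce $(n-k)$-homogeneity to the hypothesis of $k$-homogeneity by means of the complementation bijection on subsets of $X$. First I would unpack the stated definition: $k$-homogeneity as written is precisely the assertion that $G$ acts transitively on the collection $\binom{X}{k}$ of $k$-element subsets of $X$, since as $\sigma$ ranges over all of $S_n$ the target $\{x_{\sigma(i_1)},\ldots,x_{\sigma(i_k)}\}$ ranges over every $k$-subset. Dually, $(n-k)$-homogeneity is exactly transitivity of the induced $G$-action on $\binom{X}{n-k}$. So it suffices to transport transitivity from one of these collections to the other.

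The key observation is that the complementation map $c\colon\binom{X}{k}\to\binom{X}{n-k}$ defined by $c(A)=X\setminus A$ is a $G$-equivariant bijection. I would verify equivariance directly: each $g\in G$ acts on $X$ as a bijection, and for any bijection $\phi$ of $X$ one has $\phi(X\setminus A)=X\setminus\phi(A)$. Hence $g\cdot(X\setminus A)=X\setminus(g\cdot A)$, which says $c(g\cdot A)=g\cdot c(A)$. Given two $(n-k)$-subsets $A$ and $B$, I would then apply the $k$-homogeneity hypothesis to their complements $X\setminus A,\,X\setminus B\in\binom{X}{k}$ to obtain a $g\in G$ with $g\cdot(X\setminus A)=X\setminus B$; equivariance rewrites the left-hand side as $X\setminus(g\cdot A)$, and cancelling complements yields $g\cdot A=B$. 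This establishes transitivity on $(n-k)$-subsets, i.e.\ $(n-k)$-homogeneity.

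I do not expect any genuine obstacle here. The only points requiring care are the correct reading of the paper's definition of homogeneity as set-transitivity on $k$-subsets, and the verification that the $G$-action commutes with complementation, which rests solely on each group element acting as a bijection of $X$. The argument is manifestly symmetric under $k\leftrightarrow n-k$, reflecting the duality inherent in the statement.
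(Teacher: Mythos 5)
Your proof is correct, and it fills in exactly the argument the paper has in mind: the paper states this lemma without proof, remarking only that it ``immediately follows from the definition of homogeneity,'' and the $G$-equivariant complementation bijection $A \mapsto X \setminus A$ between $k$-subsets and $(n-k)$-subsets is precisely that immediate argument. Nothing is missing; your reading of $k$-homogeneity as transitivity on $k$-subsets matches the paper's definition.
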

We now have a final lemma before stating and proving the main result of this section.
\begin{lem}\label{lem:3hom}
If $\Phi$ is an ETF with $n > 3$ vectors which has $3$-homogeneous line symmetry group, then up to switching equivalence, the inner products are all $0$ ($n=d$), negative ($n=d+1$), or purely imaginary ($n=2d$).
\end{lem}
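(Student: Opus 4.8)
The plan is to reduce everything to a single statement about the common triple product and then split into cases, with the real work being to exclude ``generically complex'' triple products. Normalize so that $\norm{\varphi_j}=1$ and $\absip{\varphi_j}{\varphi_k}=c$ for $j\neq k$; if $c=0$ the vectors are orthonormal and tightness forces $n=d$, an orthonormal basis with all inner products $0$, so assume $c>0$. Write the off-diagonal Gram entries as $\ip{\varphi_j}{\varphi_k}=c\,\omega_{jk}$ with $\abs{\omega_{jk}}=1$. First I would invoke $3$-homogeneity together with Theorem~\ref{thm:TP}: a line automorphism carries any unordered triple to any other while preserving ordered triple products, and the six orderings of a fixed triple realize only the conjugate pair $\TP,\overline{\TP}$, so every ordered triple product lies in one fixed conjugate pair $\{\beta,\overline\beta\}$ with $\abs{\beta}=c^3$. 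Writing $\tilde\beta=\beta/c^3$, the entire problem reduces to showing $\tilde\beta^4=1$, i.e.\ that $\beta$ is real or purely imaginary.

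Next I would extract the one analytic constraint from tightness. The normalized Gram matrix is $I+c\,\Omega$ with $\Omega$ Hermitian, zero-diagonal, unimodular off-diagonal; tightness forces $\Omega$ to have exactly two eigenvalues, hence $\Omega^2=\nu\,\Omega+(n-1)I$ with $\nu=\tfrac{n-2d}{dc}\in\bR$. Reading off an off-diagonal entry and multiplying by $\omega_{\ell j}$ yields, for all $j\neq\ell$, the relation $\sum_{k\neq j,\ell}\TP(j,k,\ell)=\nu$. Since each summand is $\beta$ or $\overline\beta$, taking imaginary parts shows that when $\beta\notin\bR$ the two values occur equally often in every such sum; in particular $n$ is even and the counts $n_+=n_-=\tfrac{n-2}{2}$ are perfectly balanced across every pair $(j,\ell)$.

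The crux is to rule out $\beta$ with both parts nonzero, and this is where I expect the real difficulty. For any four distinct indices the telescoping identity $\TP(j,k,\ell)\TP(j,\ell,m)=\TP(j,k,m)\TP(k,\ell,m)$ holds (both sides equal the $4$-cycle product $\omega_{jk}\omega_{k\ell}\omega_{\ell m}\omega_{mj}$), giving $\tilde\beta^{a-b}=1$ where $a=\epsilon(j,k,\ell)+\epsilon(j,\ell,m)$ and $b=\epsilon(j,k,m)+\epsilon(k,\ell,m)$ record the orientations, $\epsilon\in\{\pm1\}$ being defined by $\TP=c^3\tilde\beta^{\epsilon}$. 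If some four indices give $a\neq b$, then $\tilde\beta^{\pm2}=1$ (forcing $\beta$ real) or $\tilde\beta^{\pm4}=1$, so in the complex case we conclude $\tilde\beta=\pm i$. Otherwise $a=b$ for all four indices, which is exactly the cocycle condition $\delta\epsilon=0$; since the complete complex on the indices is contractible, $\epsilon=\delta\eta$ for an integer antisymmetric $\eta$, and then the balance condition $\sum_{k}\epsilon(j,k,\ell)=0$ forces $\eta_{j\ell}=(r_j-r_\ell)/n$ with $r_j=\sum_k\eta_{jk}$, whence $\delta\eta\equiv0$ and $\epsilon\equiv0$, contradicting $\epsilon=\pm1$. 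Thus $\beta$ is real or purely imaginary.

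Finally I would read off the three cases. If $\beta$ is real, all triple products equal $\pm c^3$: all negative forces a simplex $n=d+1$ (Lemma~\ref{lem:sn}), while all positive is impossible for $d>1$, since switching to all $\omega_{jk}=1$ (a coboundary, the triangle phases vanishing) gives Gram matrix $(1-c)I+cJ$, of rank $n$ unless $c=1$, forcing $d=1$. If $\beta$ is purely imaginary, then $\sum_{k}\TP(j,k,\ell)=\nu$ is a sum of purely imaginary numbers equal to a real number, so $\nu=0$ and $n=2d$. In each nontrivial case every triangle phase equals a fixed value modulo $\pi$ (imaginary case) or modulo $2\pi$ (simplex case), and solving the same cocycle-equals-coboundary problem lets me switch to a representative with all inner products negative (regular simplex) or all purely imaginary; by Proposition~\ref{prop:switch} this leaves the triple products unchanged, completing the proof.
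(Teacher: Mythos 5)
Your proof is correct, but it follows a genuinely different route from the paper's. The two agree on the opening move: $3$-homogeneity plus Theorem~\ref{thm:TP}, combined with conjugation-under-reversal and cyclic invariance of triple products, forces every triple product into a single conjugate pair $\{\beta,\overline{\beta}\}$ with $\abs{\beta}=c^3$. At that point the paper stops classifying on its own: it computes the characteristic polynomial of a generic $3\times 3$ Gram submatrix, observes that it depends only on $\abs{a}^2$ and $\operatorname{Re}(a^3)$ and hence is constant across triples, concludes the ETF is $3_c$-uniform, and then quotes the classification of $3_c$-uniform ETFs from \cite{hoffman2012complex} to obtain the trichotomy. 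You instead reprove the needed case of that classification from scratch: tightness plus equiangularity give the quadratic relation $\Omega^2=\nu\Omega+(n-1)I$, hence the real row-sum identity $\sum_{k\neq j,\ell}\TP(j,k,\ell)=c^3\nu$ (you wrote $\nu$, but the constant is immaterial); its imaginary part yields the balance condition when $\beta\notin\bR$; and the telescoping identity together with the cocycle--coboundary argument on the orientation function $\epsilon$---valid because the full simplex on the index set has vanishing $H^2$, and because the balance condition forces any coboundary representation $\epsilon=\delta\eta$ to vanish identically---rules out $\beta$ with both real and imaginary parts nonzero. The endgame is also sound: all-positive is killed by the rank of $(1-c)I+cJ$ together with the section's standing assumption $d>1$; all-negative gives a simplex; purely imaginary forces $\nu=0$, i.e., $n=2d$; and the cone-construction switchings realize the normalized representatives, consistently with Proposition~\ref{prop:switch}. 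What each approach buys: the paper's proof is short but rests on the external Hoffman--Solazzo result; yours is self-contained and elementary, and it produces the switching normalizations explicitly rather than only asserting their existence. One citation nit: the fact ``all triple products negative $\Leftrightarrow n=d+1$'' is the \cite{FJKM17} fact quoted inside the \emph{proof} of Lemma~\ref{lem:sn}, not the statement of that lemma; alternatively you can derive it directly from the rank of $(1+c)I-cJ$, in parallel with your all-positive argument.
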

\begin{proof}
Let $\Phi = (\varphi_j)_j$ be arbitrary and let $j,k,\ell$ be indices.  Note that
\begin{align*}
\overline{\TP(j,k,\ell)}=\overline{\ip{\varphi_j}{\varphi_k}\ip{\varphi_k}{\varphi_{\ell}}\ip{\varphi_{\ell}}{\varphi_j}} &= \ip{\varphi_k}{\varphi_j}\ip{\varphi_{\ell}}{\varphi_k}\ip{\varphi_j}{\varphi_{\ell}} = \ip{\varphi_j}{\varphi_{\ell}}\ip{\varphi_{\ell}}{\varphi_k}\ip{\varphi_k}{\varphi_j}=\TP(j,\ell,k).
\end{align*}
Also note that triple products are invariant to cyclic permutation of the indices; i.e.,
\[
\TP(j,k,\ell) = \TP(\ell,j,k) = \TP(k,\ell, j).
\]
Thus, for any fixed set of indices $\{j,k,\ell\}$, there are only two possible triple products regardless of order, $\TP(j,k,\ell)$ and $\overline{\TP(j,k,\ell)}$.
Now assume that $\Phi$ is an ETF with $n > 3$ vectors which has a $3$-homogeneous line symmetry group. Since $\Phi$ is an ETF, every triple product (of distinct indices) has modulus $\abs{a}^3$ for some $a \in \bC$. And further, since $\Phi$ is $3$-homogeneous and every set of three lines is mapped to every other set of three lines, $\TP(j,k,\ell) \in \{\TP(\tilde{j},\tilde{k},\tilde{\ell}),\, \overline{\TP(\tilde{j},\tilde{k},\tilde{\ell})}\}$ for any distinct triples of distinct indices.  That is, every triple product must be of the form $a^3$  or $\overline{a^3}$. Assume without loss of generality that the norms of the vectors of the ETF are all $1$. An ETF with unit norm vectors such that the spectral norm of 
\[
\begin{pmatrix} 1 & \ip{\varphi_j}{\varphi_k} &  \ip{\varphi_j}{\varphi_{\ell}} \\
 \ip{\varphi_k}{\varphi_j} & 1& \ip{\varphi_k}{\varphi_{\ell}} \\
  \ip{\varphi_{\ell}}{\varphi_j} & \ip{\varphi_{\ell}}{\varphi_k}&1 
\end{pmatrix} 
\]
is constant for all $j \neq k \neq \ell \neq j$ is called \emph{$3$-uniform}~\cite{BP05} (or \emph{$3_c$-uniform}~\cite{hoffman2012complex}).  It was shown in~\cite{hoffman2012complex} that the only $3_c$-uniform ETFs are, up to switching equivalence, orthonormal bases, simplices, or $(d,2d)$-ETFs with purely imaginary inner products.  Thus, we would like to show that $3$-homogeneous ETFs are $3_c$-uniform. To that end, we compute the characteristic polynomial of the Gram submatrix above:
\begin{align*}
\lefteqn{\det\begin{pmatrix} 1-\lambda & \ip{\varphi_j}{\varphi_k} &  \ip{\varphi_j}{\varphi_{\ell}} \\
 \ip{\varphi_k}{\varphi_j} & 1-\lambda& \ip{\varphi_k}{\varphi_{\ell}} \\
  \ip{\varphi_{\ell}}{\varphi_j} & \ip{\varphi_{\ell}}{\varphi_k}&1-\lambda 
\end{pmatrix}}\\
 &= (1-\lambda)^3 -(1-\lambda)\left(\absip{\varphi_j}{\varphi_k}^2 +\absip{\varphi_j}{\varphi_{\ell}}^2  + \absip{\varphi_k}{\varphi_{\ell}}^2\right) + \TP(j,k,\ell) +  \overline{\TP(j,k,\ell)} \\
&= (1-\lambda)^3 -3(1-\lambda) \abs{a}^2 + 2\operatorname{Re}  a^3,
\end{align*}
which is constant for any distinct $j,k,\ell$.  Thus, the spectral norms of any of the $3 \times 3$ Gram submatrices are equal to each other as their spectra are all equal.
\end{proof}
The concept of $k$-uniformity was introduced to study robustness of frames to $k$ erasures~\cite{HoPa04,BP05}. (Optimal total volume~\cite{cahill2023optimal} of Parseval frames is also a related concept.) Equal-norm tight frames are optimal under certain models to noise and single erasures~\cite{GKK01} and are in turn $1$-uniform. ETFs are optimal to two erasures and are $2$-uniform~\cite{BP05,StH03,HoPa04}. ETFs with all zero, all negative, or all purely imaginary triple products are optimal to $3$ erasures and $3$-uniform~\cite{BP05,hoffman2012complex}. Symmetry and  $k$-uniformity are related, with $k$-fold symmetry almost implying $k$-uniformity, as outlined below.
It is known that a \emph{Fourier frame}, i.e., a frame formed as an all-column submatrix of a Fourier transform matrix is always an equal-norm tight frame with transitive vector symmetry group containing the group of modulations (similar to Proposition~\ref{prop:palorbit}).  More generally, the orbit of a non-zero vector under a finite irreducible unitary representation is always an equal norm tight frame~\cite{vale2005tight} with transitive vector symmetry group. There are other results concerning when orbits of a non-zero vector under a projective unitary representation yields an equal-norm tight frame with necessarily transitive line symmetry group (see, e.g.,~\cite{lawrence2005linear,av2025abelian}). It is, however, possible for a spanning set to have a transitive vector or line symmetry group and not be a tight frame.  For example, if one multiplies the first entry of every vector in a Paley ETF $\Phi_p$ by a non-unimodular scalar, the resulting set of vectors form a spanning set with a transitive vector/line symmetry group that contains cyclic permutations induced from the Paley modulation, but the new vectors would no longer be a tight frame / $1$-uniform frame.
Any spanning set which has $2$-transitive line symmetry group is an ETF~\cite{iverson2024doubly}, and it immediately follows from the definition of $2$-homogeneity that any equal-norm tight frame which has $2$-homogeneous line symmetry group is an ETF, where ETFs are precisely the $2$-uniform frames.  The proof of Lemma~\ref{lem:homcomp} shows that any $3$-homogeneous ETF must be $3$-uniform. ETFs which have purely imaginary triple products must have $n=2d$~\cite{ettaoui2000equiangular,strohmer2008note} and always exist when $2d \times 2d$ skew-symmetric conference matrices exist~\cite{delsarte1991bounds}.  However, we will see in Theorem~\ref{thm:khomETF} that only certain skew-symmetric conference matrices will lead to $3$-homogeneous ETFs.

As we will see in Theorem~\ref{thm:khomETF}, $k$-homogeneity of ETFs for $k>1$ only holds in a few interesting cases. For $k \geq 4$, $k$-homogeneity basically only holds for orthonormal bases and simplices or because $(n-k)$-homogeneity implies $k$-homogeneity.  The remaining cases to study are $2$-homogeneous ETFs which are not $2$-transitive (otherwise one could look to \cite{IvMi18,dempwolff2023trans,iverson2024doubly}) and $2$-transitive ETFs which are $3$-homogeneous.  The latter case is completely characterized below.
The $(2,4)$-SIC is the unique (up to switching equivalence and vector permutation, cf.~\cite{zhu2010sic}) ETF of $4$ vectors in $\bC^2$ and the real $(3,6)$-ETF is the unique (up to switching equivalence and vector permutation, cf.~\cite{BKo06}) real $(3,6)$-ETF.
\begin{thm}\label{thm:khomETF}
The only ETFs with $3$-homogeneous line symmetry groups are the $(2,4)$-SIC,  the $(d,2d)$-ETFs with  $d = q+1$ where $q\equiv 3 \bmod 4$ is a prime power arising from Paley skew-conference matrices, orthonormal bases, and simplices, which are all $2$-transitive. The only ETFs with $4$-homogeneous line symmetry groups are the $(2,4)$-SIC, the real $(3,6)$-ETF, orthonormal bases, and simplices.
\end{thm}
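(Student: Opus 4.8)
The plan is to prove both statements by reducing to a few regimes and then treating each with the homogeneity transfer of Lemma~\ref{lem:homcomp} and the triple-product dichotomy of Lemma~\ref{lem:3hom}. For the $3$-homogeneous claim, Lemma~\ref{lem:3hom} shows that when $n>3$ a $3$-homogeneous ETF has, up to switching equivalence, inner products that are all $0$ (so $n=d$, an orthonormal basis), all negative (so $n=d+1$, a regular simplex), or all purely imaginary (so $n=2d$); the cases $n\le 3$ are themselves orthonormal bases or simplices. By Lemma~\ref{lem:sn} the first two families have line symmetry group $S_n$, which is $3$-transitive and hence $3$-homogeneous, so they belong on the list. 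It then remains to decide which purely imaginary $(d,2d)$-ETFs are $3$-homogeneous. The boundary case $d=2$ (so $n=4$) is immediate: by Lemma~\ref{lem:homcomp}, $3$-homogeneity is equivalent to $1$-homogeneity, i.e.\ transitivity, and the $(2,4)$-SIC---the unique ETF of four vectors in $\bC^2$ by~\cite{zhu2010sic}---is transitive, hence $3$-homogeneous.

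The substance of the theorem is the regime $d\ge 3$, $n=2d\ge 6$. Since $3\le n/2$, the threshold results cited in the introduction upgrade $3$-homogeneity of $\Sym_{\ell}(\Phi)$ to $2$-transitivity; and $\Sym_{\ell}(\Phi)$ cannot be $3$-transitive, for otherwise Lemma~\ref{lem:trip} would force $\Phi$ to be an orthonormal basis or a simplex, contradicting $n=2d$ with $d\ge 2$. Thus $\Sym_{\ell}(\Phi)$ is $3$-homogeneous but not $3$-transitive, and I would apply Kantor's classification of such groups~\cite{kantor1972khomo}, which (away from two small affine exceptions) gives $\operatorname{PSL}(2,q)\trianglelefteq \Sym_{\ell}(\Phi)\le \operatorname{P\Gamma{}L}(2,q)$ with $q\equiv 3\bmod 4$ a prime power acting on a projective line. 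Matching this action to the $n$ lines of $\Phi$, and using that triple products determine an ETF up to switching equivalence (Proposition~\ref{prop:switch}), I would identify the only invariant triple-product pattern as the quadratic-residue one, so that $\Phi$ is the Paley skew-conference ETF of the statement. Conversely, since $q\equiv 3\bmod 4$ makes $\operatorname{PSL}(2,q)$ act $3$-homogeneously but not $3$-transitively, these Paley ETFs are genuinely $3$-homogeneous, and all entries of the list are $2$-transitive as claimed.

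For the $4$-homogeneous statement I would argue by the size $n$, repeatedly using Lemma~\ref{lem:homcomp} to replace $4$-homogeneity by $(n-4)$-homogeneity. When $n\ge 8$ we have $4\le n/2$, so $4$-homogeneity implies $3$-homogeneity and the first part applies: the candidates are orthonormal bases, simplices, and Paley $(d,2d)$-ETFs. The Paley family is eliminated because $4\le n/2$ would now force $3$-transitivity, which $\operatorname{PSL}(2,q)$ does not satisfy; orthonormal bases and simplices survive because $S_n$ is $4$-transitive. For $n=7$, $4$-homogeneity is equivalent to $3$-homogeneity, and the first part leaves only orthonormal bases and simplices (there is no purely imaginary ETF since $n$ is odd). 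For $n=5$ it reduces to transitivity and for $n=4$ to the trivial $0$-homogeneity, in both cases leaving exactly the ETFs of that size, with the $(2,4)$-SIC appearing at $n=4$. The interesting case is $n=6$, where $4$-homogeneity is equivalent to $2$-homogeneity: the nontrivial candidate is a $(3,6)$-ETF, which is switching equivalent to the real $(3,6)$-ETF (unique by~\cite{BKo06}) and is $2$-transitive through its icosahedral $\cong\operatorname{PSL}(2,5)$ action, hence $2$-homogeneous. This is exactly why the real $(3,6)$-ETF appears in the $4$-homogeneous list but not the $3$-homogeneous one: its inner products are real rather than purely imaginary, so it fails Lemma~\ref{lem:3hom} and is not $3$-homogeneous, yet at $n=6$ only the weaker $2$-homogeneity is required.

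The main obstacle is the purely imaginary regime $d\ge 3$ of the first part, where two points need genuine care. First, Kantor's classification carries the affine exceptions $\AGaL(1,8)$ and $\AGaL(1,32)$ on $8$ and $32$ points; I would have to show these either reproduce the $\operatorname{PSL}(2,q)$ action already recorded (as happens for the $(4,8)$ ETF, where the groups coincide) or cannot be the line symmetry group of any purely imaginary ETF, the natural mechanism being that the quadratic-residue two-graph they would require degenerates in characteristic $2$. Second, I must confirm that the $3$-homogeneous group action pins down the triple-product pattern, and hence the ETF, uniquely up to switching equivalence; here I would lean on the classification of doubly transitive lines~\cite{IvMi18,iverson2024doubly} rather than rederive it. By comparison, the $S_n$ computations, the Naimark-invariance of the symmetry groups, and the small-$n$ enumerations are routine bookkeeping.
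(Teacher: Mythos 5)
There is a genuine gap in your $n=6$ case of the $4$-homogeneous classification. You assert that any $(3,6)$-ETF ``is switching equivalent to the real $(3,6)$-ETF (unique by~\cite{BKo06})'', but \cite{BKo06} only gives uniqueness among \emph{real} $(3,6)$-ETFs, and the claim is false in the complex setting: there is a continuum of pairwise inequivalent complex $(3,6)$-ETFs (equivalently, a one-parameter family of complex conference matrices of order $6$). So after reducing $4$-homogeneity to $2$-homogeneity at $n=6$, you have not ruled out that some non-real $(3,6)$-ETF is $2$-homogeneous. The paper closes exactly this hole differently: since $6$ is not a prime power, Kantor's theorem~\cite{kantor1969auto} forces any $2$-homogeneous group of degree $6$ to be $2$-transitive, and then Theorem 1.3 of~\cite{iverson2024doubly} gives uniqueness of the $(3,6)$-ETF with $2$-transitive line symmetry group, which is the real one. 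Your argument needs this (or an equivalent) chain; as written it rests on a citation that does not support the statement.

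Two smaller points. First, in the core $3$-homogeneous regime $n=2d\geq 6$, your detour through Kantor's classification of $3$-homogeneous-but-not-$3$-transitive groups leaves precisely the hard steps open (the $\AGaL(1,8)$, $\AGaL(1,32)$ exceptions and the claim that the only invariant triple-product pattern is the quadratic-residue one), and you resolve them by deferring to~\cite{IvMi18,iverson2024doubly} --- which is what the paper does directly and more simply: Livingstone--Wagner gives $2$-transitivity, Lemma~\ref{lem:3hom} gives $n=2d$ with purely imaginary triple products, and \cite{iverson2024doubly} classifies the $2$-transitive purely imaginary $(d,2d)$-ETFs as the Paley skew-conference ones. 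So that part of your proof is fine only insofar as it collapses back onto the paper's argument; the group-theoretic detour buys nothing and adds unproven steps. Second, the clause ``which are all $2$-transitive'' requires knowing the $(2,4)$-SIC has $2$-transitive line symmetry group; you assert this without justification, whereas the paper cites~\cite{Zhu15}.
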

\begin{proof}
We consider $(d,n)$-ETFs, using~\cite{FM15} as a reference for $(d,n)$ which yield ETFs. By \cite{livingstone1965transitivity}, if $n \geq 2k$, then any $k$-homogeneous group action on a set of size $n$ is $(k-1)$-transitive. Thus, all $3$-homogeneous ETFs (lines or vectors) with $n\geq 6$ are also $2$-transitive.  By Lemma~\ref{lem:3hom}, the only ETFs which are not simplices or orthonormal bases but have $3$-homogeneous line symmetry group must have $n=2d$.  It follows from~\cite{iverson2024doubly} that the only $2$-transitive ETFs with $n=2d \geq 6$ and purely imaginary inner products have $d = q+1$ with $q\equiv 3 \bmod 4$ and are generated by Paley skew-conference matrices. These have line symmetry group $\operatorname{PSL}(2,q)$ acting necessarily faithfully and transitively, where (see, e.g.,~\cite{kantor1972khomo}) $\operatorname{PSL}(2,q)$ acts $3$-homogeneously. The only ETF with $n<6$ which is not an orthonormal basis or simplex is the $(2,4)$-SIC, which is $1$-homogeneous as a SIC and thus $3=(4-1)$-homogeneous as well.  Note that the $(2,4)$-SIC has $2$-transitive line symmetry group \cite{Zhu15}; thus, all of the ETFs with $3$-homogeneous line symmetry group are also $2$-transitive.

Applying \cite{livingstone1965transitivity} again, we see that all $4$-homogeneous ETFs (lines or vectors) with $n\geq 8$ are also $3$-transitive, which means they are simplex ETFs or orthonormal bases (Lemma~\ref{lem:trip}).  Similarly, ETFs with $4\leq n<8$ which are orthonormal bases or simplex ETFs trivially have $4$-homogeneous line symmetry groups (Lemma~\ref{lem:sn}). The remaining ETFs which have $n < 8$ are the $(2,4)$-SIC, the $(3,6)$-ETFs, the $(3,7)$-ETF(s) and $(4,7)$-ETF(s). The $(2,4)$ SIC is trivially $4$-homogeneous since the identity maps the group of four lines to the group of four. If a (not necessarily real) $(3,6)$-ETF is $4$-homogeneous, then it is $2$-homogeneous. Since $6$ is not prime power, any $2$-homogeneous $(3,6)$-ETF must be $2$-transitive~\cite{kantor1969auto}. The real $(3,6)$-ETF corresponds to the antipodal vertices of the regular icosahedron and thus has line symmetry group isomorphic to $\operatorname{PSL}(2,5)$, which acts $2$ transitively on the lines (see, e.g., Theorem 1.3 \cite{iverson2024doubly} which also gives uniqueness up to equivalence of the $(3,6)$-ETFs with $2$-transitive symmetry), in turn implying that the automorphism group is $2$-homogeneous and thus $4=(6-2)$-homogeneous. The remaining ETF parameters are the $(3,7)$- and $(4,7)$-ETFs. We have already shown that neither is $3$-homogeneous; thus, neither can have a $4=(7-3)$-homogeneous line symmetry group.
\end{proof}

As with the characterization of $4$-homogeneity, in the remaining cases where $k > 4$, any $k$-homogeneous ETF with $n \geq 2k$ will be $(k-1)$-transitive and thus can only be a simplex ETF or orthonormal basis.  For $n < 2k$, the ETF will be $(n-k)$-homogeneous with $n-k < k$.  Thus, inductively, the only possibilities are $1$-homogeneous ETFs (i.e., the ETF lines are spun up as the orbit of a group with no additional symmetries), $2$-homogeneous ETFs, or any of the $3$ or $4$-homogeneous ETFs enumerated above.

There are skew-conference matrices which are not Paley (e.g.,~\cite{Handbook}).  Thus, being an ETF with purely imaginary triple products is not sufficient to be $3$-homogeneous.

\begin{cor}
The only ETFs with $3$-homogeneous vector symmetry groups are orthonormal bases and the regular simplex ETFs. The only ETFs with $4$-homogeneous vector symmetry groups are the $(2,4)$-SIC, orthonormal bases, and simplices.\end{cor}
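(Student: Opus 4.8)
The plan is to push the line-symmetry classification of Theorem~\ref{thm:khomETF} down to the vector setting using the containment $\Sym_v(\Phi) \leq \Sym_{\ell}(\Phi)$, and then to prune the resulting short list with the converse Lemma~\ref{lem:trip}, which characterizes $2$-transitive vector symmetry as occurring exactly for orthonormal bases and regular simplices. First I would observe that if $\Sym_v(\Phi)$ acts $k$-homogeneously then so does the larger group $\Sym_{\ell}(\Phi)$; hence any ETF whose vector symmetry group is $3$-homogeneous (respectively $4$-homogeneous) already occurs in the corresponding list of Theorem~\ref{thm:khomETF}. This reduces each claim to deciding, family by family, whether the generally smaller group $\Sym_v(\Phi)$ remains $k$-homogeneous.

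The two families that survive in every case are orthonormal bases and regular simplices: by Lemma~\ref{lem:sn} these have $\Sym_v(\Phi) \cong S_n$, which is $k$-homogeneous for all $k \leq n$. For the large-$n$ candidates I would argue by contradiction. Whenever $n \geq 2k$, the bound of~\cite{livingstone1965transitivity} upgrades $k$-homogeneity to $(k-1)$-transitivity, and for $k \geq 3$ this produces at least a $2$-transitive vector symmetry group, which by Lemma~\ref{lem:trip} forces $\Phi$ to be an orthonormal basis or a regular simplex. Taking $k=3$ deletes the Paley skew-conference $(d,2d)$-ETFs (where $n = 2d \geq 8$) from the $3$-homogeneous list; taking $k=4$, together with the complementation Lemma~\ref{lem:homcomp}, disposes of every $n \geq 8$ candidate in the $4$-homogeneous list, leaving only orthonormal bases and simplices there.

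The genuine content lies in the low-dimensional exceptions, where $n < 2k$ and the homogeneity-to-transitivity step is unavailable. For the $4$-homogeneous statement the $(2,4)$-SIC has $n = 4 = k$, so it is trivially $4$-homogeneous and must be kept, as are the simplices with $n \leq k$ for which $k$-homogeneity is likewise automatic; the real $(3,6)$-ETF, however, has $n = 6$, so a $4$-homogeneous $\Sym_v(\Phi)$ would be $2$-homogeneous by Lemma~\ref{lem:homcomp}, and since $6$ is not a prime power the result of Kantor~\cite{kantor1969auto} promotes this to $2$-transitivity, whereupon Lemma~\ref{lem:trip} yields a contradiction and excludes it. For the $3$-homogeneous statement the delicate case is the $(2,4)$-SIC, where by Lemma~\ref{lem:homcomp} being $3$-homogeneous on four points is equivalent to transitivity of $\Sym_v(\Phi)$. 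Here I would fix an explicit realization, write down its Gram matrix, and read off from Theorem~\ref{thm:TP} that every inner-product-preserving permutation must fix the distinguished pair carrying the exceptional (negative real) inner product, so that $\Sym_v(\Phi)$ preserves a partition into two blocks and is not transitive. The main obstacle is precisely this computation: unlike $\Sym_{\ell}(\Phi)$, the group $\Sym_v(\Phi)$ is not invariant under switching equivalence, so one must argue on a fixed realization (or check a canonical one, such as the purely imaginary form guaranteed by Lemma~\ref{lem:3hom}, where the same conclusion follows from the automorphism group of the associated order-$4$ skew-symmetric conference matrix) that no relabeling of the four vectors acts transitively while preserving every inner product. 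The residual small orthonormal bases and simplices are then settled by Lemma~\ref{lem:sn} and, when $n \leq k$, by the triviality of $k$-homogeneity.
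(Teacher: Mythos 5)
Most of your proposal tracks the paper's proof: both arguments reduce to the lists of Theorem~\ref{thm:khomETF} via $\Sym_v(\Phi)\leq\Sym_{\ell}(\Phi)$, both use \cite{livingstone1965transitivity} together with Lemma~\ref{lem:trip} to eliminate every candidate with $n\geq 2k$, and your exclusion of the real $(3,6)$-ETF (complementation, then \cite{kantor1969auto} since $6$ is not a prime power, then Lemma~\ref{lem:trip}) is the paper's argument made slightly more careful. The genuine gap is in the one step where you depart from the paper: showing that the $(2,4)$-SIC does not have a transitive (equivalently, by Lemma~\ref{lem:homcomp}, $3$-homogeneous) vector symmetry group. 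Your plan is to fix an explicit realization and exhibit a distinguished pair with negative real inner product that every inner-product-preserving permutation must fix. That proves non-transitivity of $\Sym_v$ for that realization only. The corollary quantifies over all ETFs, hence over every phasing of the $(2,4)$-SIC, and---as you yourself observe---$\Sym_v$ is not a switching invariant, so no single realization, canonical or otherwise, can stand in for the rest. Worse, the feature your argument hinges on is not stable: rephasing one vector moves or destroys the negative real pair, and in the purely imaginary realization of Lemma~\ref{lem:3hom} every unordered pair carries the same value $\pm i/\sqrt{3}$, so the distinguished-pair argument is unavailable there (the skew-conference/tournament argument you mention in parentheses does work for that realization, but again only for it).

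To close the gap you need a phase-independent argument. One option: every transitive subgroup of $S_4$ contains $\bZ_4$ or the Klein four-group. If the Gram matrix $G$ of some $(2,4)$-ETF were invariant under a $4$-cycle, it would be circulant with real entry $b=\pm 1/\sqrt{3}$ in the $(1,3)$ position, forcing two of its eigenvalues to sum to $2(1+b)=2\pm 2/\sqrt{3}\notin\{0,2,4\}$, impossible since the spectrum must be $\{2,2,0,0\}$; if $G$ were invariant under the Klein four-group, Hermiticity forces all off-diagonal entries to be real, producing four equiangular lines in $\bR^2$, which do not exist. Either contradiction holds for every phasing simultaneously. The paper instead argues representation-theoretically: the $(2,4)$-SIC is the orbit of a fiducial under a projective unitary representation, which acts transitively on the lines but not on the vectors, so $\Sym_v$ is not $1$-homogeneous and hence, by Lemma~\ref{lem:homcomp}, not $3$-homogeneous.
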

\begin{proof}
By following the cases in the proof of Theorem~\ref{thm:khomETF}, we see that the only possible ETF which has a $3$-homogeneous vector symmetry group which is not $2$-transitive is the $(2,4)$-SIC.  Although any SIC is the orbit of a fiducial under a unitary system, the unitary system is a projective unitary representation and thus does not act transitively on the vectors.  Hence, the vector symmetry group of the $(2,4)$-SIC is neither $1$-homogeneous nor $3=(4-1)$-homogeneous (Lemma~\ref{lem:homcomp}). All other ETFs with ETFs with $3$-homogeneous line symmetry groups must be $2$-transitive, and hence Lemma~\ref{lem:trip} applies.

By Theorem~\ref{thm:khomETF}, we just need to check the $(2,4)$-SIC and the $(3,6)$ real ETF for $4$-homogeneous automorphism vector groups.  The $(2,4)$-SIC is trivially $4$-homogeneous. If the $(3,6)$ real ETF had a $4$-homogeneous vector symmetry group, then (Lemma~\ref{lem:homcomp}) it would be $2=(6-4)$-homogeneous, which (Lemma~\ref{lem:trip}) cannot happen since it is neither an orthonormal basis or a regular simple ETF.
\end{proof}

\section{Dave Larson's AMS Memoirs}\label{sec:memoirs}
The preceding results mirror several themes in~\cite{HL00,DaL98}; namely, that group actions are a powerful tool to construct frames with desirable properties, that Naimark complementation can be used to simplify the search for frames with particular properties, and that associated groups other than those used to generate the frame are powerful tools in understanding the frame properties.

Many infinite frames of note---like Gabor/time-frequency and wavelet---are generated as the (subsampling) of the orbit of a (projective) unitary representation and were a main focus of Dave Larson's two AMS Memoirs~\cite{HL00,DaL98}.  A Gabor frame with time-frequency shifts over a lattice would be an example of an infinite frame with a transitive automorphism group of lines.   We consider for a moment automorphism groups of such frames over infinite dimensional (separable) Hilbert spaces.  Rephrasing Theorem 3.8$'$ in~\cite{HL00}: If a tight frame $\Phi$ is the orbit of a unitary representation of a group $H$, then the Naimark complement is also the orbit of some unitary representation of the same group. Such a group $H$ is a subgroup of the automorphism groups of the lines and the vectors; so, this result is related to the fact that Theorem~\ref{thm:TP} implies that the automorphism groups of a finite tight frame and a Naimark complement are the same. Also, Corollary 3.10 of~\cite{HL00} yields that if a tight frame with frame bound $1$ is the orbit of a single bounded unitary operator, then it is unitarily equivalent to $\{e^{ 2\pi i n \cdot}\vert_E \, : \, n \in \bZ\}$ for $E \subset \bT$, which is related to submatrices of Fourier matrices always yielding tight frames.

Assume that  $\Phi = (\varphi_j)_{j\in J}$ is a tight frame with frame bound $A$ for an infinite dimensional separable Hilbert space $\cH$. Then $J$ is countably infinite. If $\Phi$ were $2$-homogeneous, then $\absip{\varphi_i}{\varphi_j} = \absip{\varphi_k}{\varphi_{\ell}}$ for all $i \neq j$ and $k \neq \ell$.  Call this number $a$. By the definition of a tight frame (Definition~\ref{defn:frame}), we see that
\[
\norm{\varphi_k}^2 = \frac{1}{A} \sum_{j \in J} \absip{\varphi_k}{\varphi_j}^2 =\frac{1}{A}\left( \norm{\varphi_k}^2 + \sum_{j \neq k}a^2\right).
\]
Since the left-hand-side is a finite number, the right-hand-side must converge, meaning that $a = 0$.  Thus, the only infinite $2$-homogeneous infinite frames are orthogonal bases.

An important concept introduced in~\cite{DaL98} (with additional contemporary publications \cite{DaL98,hernandez1996smoothing,hernandez1997smoothing,fan1996construction,dai1997wavelet,dai1997wavelet2}) is wavelet sets.  \emph{(Parseval) wavelet sets} are measurable subsets of $\bR^d$ for which the integer (lattice) translates satisfy one condition and the dyadic dilations satisfy another condition.  When $X$ is a (Parseval) wavelet set, then the inverse Fourier transform of the indicator function of $X$ generates a (wavelet tight frame with frame bound $1$) a wavelet orthonormal basis.  The simple geometric characterization of wavelet sets allowed the discovery of wavelets with surprising features.  We may view the construction of Fourier ETFs as a kindred concept.  By instead zeroing out rows rather than deleting them, we could restate Theorem~\ref{thm:diffsetcon} as meaning that modulating an indicator vector of a set yields a ETF for the span if and only if the set is a difference set in the group used to generate the Fourier matrix. As in Example~\ref{ex:DSBIBD}, taking all of the shifts of any difference set (called the \emph{development}) always yields a BIBD.

Finally, we note that one of the key tools developed in \cite{DaL98} was \emph{local} (to a frame vector) \emph{commutants}.  This led to the analysis of the von Neumann algebra generated by the orbit of the left regular representation of a group $G$ being a tool to understand whether other representations of $G$ can yield frames \cite{HL00}.  A related idea is the use of Schurian association schemes to understand finite ETFs in \cite{IvMi18,iverson2024doubly}.  It is possible that the similarities between these approaches could be leveraged to create further tools to characterize ETFs.

\subsection{Problems}
In honor of the ``Problems'' offered in Dave Larson's Memoirs \cite{HL00,DaL98}, we consider a handful of questions that this manuscript leads to.

Corollary~\ref{cor:BIBD} gives that the bender of any Paley ETF is a BIBD.  However, it does not give information on the parameters of the BIBD. The bender of a Paley ETF $\Phi_p$ for $p$ a prime is simply the set of all subsets of size $(p+1)/2$. Calculations from Harley Meade show that the spark of $\Phi_{27}$ is $8$ and the bender is a $(27,8,28)$-BIBD. Calculations for Paley ETFs with larger $q$ were not tractable. 
\begin{question}\label{quest:bender}
What are the spark and bender of the Paley ETFs $\Phi_q$ with non-cyclic additive groups?
\end{question}
Dangerously powered by one infinite class and one example outside of this class, we propose the following conjecture.
\begin{conj}
Let $\Phi_q$ be a Paley ETF with $q=p^s$.  Then the spark of $\Phi_q$ is $((p+1)/2)^s$ and the bender is a $(p^s, ((p+1)/2)^s,\lambda)$-BIBD.
\end{conj}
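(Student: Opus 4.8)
The plan is to separate the two assertions. The BIBD claim is essentially in hand: by Corollary~\ref{cor:BIBD} the bender of $\Phi_q$ is a $2$-design, its point set has size $v = n = q = p^s$ automatically, and every block (short circuit) has size equal to the spark. So the whole conjecture reduces to the single statement $\spark(\Phi_q) = ((p+1)/2)^s$, after which $\lambda$ is pinned down by the design identity $\lambda\binom{v}{2} = b\binom{k}{2}$ together with a count of short circuits through a fixed pair of lines; because $\Sym_{\ell}(\Phi_q)=\tilde G$ is only $2$-homogeneous (Theorem~\ref{thm:Paleydbhm2}), this last count is an orbit--stabilizer computation for the action of $\tilde G$ on a single minimal circuit, which is how the value $\lambda = 28$ arises for $q = 27$.

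First I would recast the spark in Fourier-analytic terms. Writing a column relation $\sum_{y} f(y)\varphi_y = 0$ out row by row and using the entries $\zeta^{\ip{x}{y}_p}$ from Theorem~\ref{thm:diffsetcon}, a set of columns indexed by $\supp f$ is dependent exactly when the nonzero function $f\colon\bF_q\to\bC$ satisfies $\hat f(x) = 0$ for every $x\in\QR(\bF_q)$, i.e.\ $\supp\hat f\subseteq \NR(\bF_q)\cup\{0\}$. Thus
\[
\spark(\Phi_q) = \min\set{\abs{\supp f}}{f\neq 0,\ \supp\hat f\subseteq \NR(\bF_q)\cup\{0\}},
\]
which is the minimum Hamming weight of the length-$q$, dimension-$(q+1)/2$ code consisting of all functions $y\mapsto\sum_{x\in\NR\cup\{0\}}c_x\,\zeta^{\ip{x}{y}_p}$. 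For $s = 1$ this code is MDS by Chebotar\"ev's theorem (no square submatrix of the prime Fourier matrix is singular, cf.~\cite{stevenhaben1996chebat}), giving minimum weight $q - (q+1)/2 + 1 = (p+1)/2$ and recovering full spark. The task for $s>1$ is to compute this minimum weight exactly for the elementary abelian group $\bF_p^s$, where Chebotar\"ev fails and minors of $F_p^{\otimes s}$ genuinely vanish.

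For the upper bound I would try to build a dependency of the conjectured size as a tensor product $f = f_1\otimes\cdots\otimes f_s$ with each $f_i\colon\bF_p\to\bC$, using the factorization of the Fourier matrix of $(\bF_p^s,+)$ as $F_p^{\otimes s}$, the $s$-fold tensor power of the $p\times p$ Fourier matrix $F_p$. Since $\hat f = \hat f_1\otimes\cdots\otimes\hat f_s$, such an $f$ kills every quadratic residue precisely when the product set $\supp\hat f_1\times\cdots\times\supp\hat f_s$ is disjoint from $\QR(\bF_q)$ under the identification $\bF_q\cong\bF_p^s$; taking each $\supp\hat f_i$ of size $(p+1)/2$ and invoking the prime-order case to realize each $f_i$ with support of size $(p+1)/2$ would give $\abs{\supp f} = ((p+1)/2)^s$. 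The concrete subproblem is therefore to locate a combinatorial box $B_1\times\cdots\times B_s$ with $\abs{B_i}=(p+1)/2$ inside $\NR(\bF_q)\cup\{0\}$. A direct search in the primitive-element-basis coordinates of Example~\ref{ex:333c} already shows that no such box exists for $q=27$, so the identification (equivalently, the nondegenerate bilinear form defining the matrix) must be chosen with care, and it is conceivable that the extremal circuit is not a box in any linear coordinate system at all; finding the right construction---likely through a normal basis, the trace form, or a subfield tower $\bF_{p^t}\subset\bF_{p^s}$ for $t\mid s$---is the first genuine difficulty.

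The main obstacle, however, is the matching lower bound: an uncertainty principle asserting that every nonzero $f$ with $\supp\hat f\subseteq\NR\cup\{0\}$ has $\abs{\supp f}\ge((p+1)/2)^s$. Over $\bF_p$ this is exactly the sharp bound coming from Chebotar\"ev, but over $\bF_p^s$ no such clean statement holds, and a crude cardinality bound such as $\abs{\supp f}\cdot\abs{\supp\hat f}\ge q$ is far too weak. I would attempt the sharp bound by a rank/polynomial-method argument adapted to the quadratic-residue support set---for instance Croot--Lev--Pach slice-rank estimates or Meshulam-type inequalities for elementary abelian groups---engineered so that it detects the \emph{multiplicative} structure of $\QR(\bF_q)$ rather than merely its size. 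Making such a method yield the exact exponent $s$, and not a weaker power, is precisely where the conjecture is open; a successful lower bound would simultaneously confirm the block size and, via the orbit count under $\tilde G$, the remaining BIBD parameters.
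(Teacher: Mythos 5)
This statement is an open conjecture in the paper, not a theorem: the paper offers no proof, only the evidence that the prime case ($s=1$) holds via Chebotar\"ev's theorem (full spark) and that Harley Meade's computation gives $\Phi_{27}$ spark $8$ with bender a $(27,8,28)$-BIBD. Your proposal does not close it either, and to your credit you say so. The parts you establish are sound: by Corollary~\ref{cor:BIBD} the bender is automatically a BIBD on $v=q$ points whose block size equals the spark (and $\lambda$ is left free in the conjecture), so the conjecture is indeed equivalent to the single equality $\spark(\Phi_q)=((p+1)/2)^s$; and your recasting of the spark as the minimum Hamming weight of a nonzero $f\colon\bF_q\to\bC$ with $\supp\hat f\subseteq\NR(\bF_q)\cup\{0\}$, together with the MDS/Chebotar\"ev argument recovering the $s=1$ case, is correct. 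This matches exactly what the paper already knows.

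The genuine gap is that both halves of the spark equality are missing. For the upper bound, your tensor-product construction needs a box $B_1\times\cdots\times B_s$ with $\abs{B_i}=(p+1)/2$ inside $\NR(\bF_q)\cup\{0\}$, and by your own admission no such box exists for $q=27$ in the natural coordinates---which is the only nontrivial data point supporting the conjecture, so the proposed construction fails precisely where it would first need to succeed; you give no replacement, only the hope that a normal basis, trace form, or subfield tower might repair it. For the lower bound you offer candidate techniques (slice rank, Croot--Lev--Pach, Meshulam-type inequalities) with no argument that any of them detects the multiplicative structure of $\QR(\bF_q)$ sharply enough to produce the exponent $s$; you correctly identify this as exactly where the problem is open. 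A smaller flaw: your claim that $\lambda$ would then follow from an orbit--stabilizer count for $\tilde{G}$ acting on a \emph{single} minimal circuit tacitly assumes the bender is one $\tilde{G}$-orbit, which nothing in the paper guarantees---$2$-homogeneity forces the pair counts to be constant across pairs, not that the blocks form a single orbit. In short, your proposal is a reasonable research program whose solid steps (the reduction to the spark and the Fourier reformulation) reproduce the paper's own framing, while the core of the conjecture remains unproved in both.
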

To aid in solving the conjecture, there is code~\cite{EmilyPaley} to allow one to generate Paley ETFs of arbitrary size.  This code also generates the symmetries $T$ and $\Pi$.  There is additionally a function to construct ETFs from any difference sets.

We note that the computation of the Gram matrix of $\Phi_p$ for $p$ prime in the proof of Lemma~\ref{lem:paleyTP} shows that the $\Phi_p$ are neither of two other categories of ETFs which are in some sense $2$-transitive-adjacent, i.e., being DRACKN or roux \cite{CGSZ16,IvMi18}. This leads to the following question.
\begin{question}
Are there any ETFs which are simultaneously $2$-homogeneous and DRACKN or $2$-homogeneous and roux but not $2$-transitive?
 \end{question}

 It is a fact that any linear isometry of $\bC^d$ with respect to $\ell^p$ with $p\neq 2$ is a \emph{generalized permutation matrix}, i.e., a product of a diagonal unitary and a permutation matrix \cite{li1994isometries}.  It is also a fact that any element $U$ of the automorphism group of a $(d,n)$-Fourier frame necessarily satisfies $\norm{U\varphi_j}_p = \norm{\varphi_j}_p$ for all $1 \leq p \leq \infty$ with the $\varphi_j$ being a spanning set for $\bC^d$. Being an isometry for $p \neq 2$ on a spanning set is not sufficient to prove that it extends to an isometry on the entire space.  However, the computed automorphisms in this paper are all generalized automorphisms.  
 \begin{question}
 Are the elements of the automorphism group of a Fourier ETF always generalized permutations?
 \end{question} 
We point to code from Joey Iverson~\cite{JoeySymm} to determine the vector and line symmetry groups of frames as a resource to help answer this question.

\section*{Acknowledgments}
The author is especially grateful to Joey Iverson, for fruitful discussions and GAP code \cite{JoeySymm} to run some experiments.  The author is also thankful to Harley Meade, who ran Joey's code to help formulate hypotheses and explicitly proved that groups related to $\{M^k T^{\ell}\}$ act $2$-homogeneously on ETFs related to prime-order Paley ETFs, to Dustin Mixon for helpful feedback, and to the anonymous referees for their thoughtful suggestions which lead to great improvements in the text. Most of all, the author is grateful to Dave Larson for introducing her to frames twenty-something years ago.


\newcommand{\etalchar}[1]{$^{#1}$}
\providecommand{\bysame}{\leavevmode\hbox to3em{\hrulefill}\thinspace}
\providecommand{\MR}{\relax\ifhmode\unskip\space\fi MR }
\providecommand{\MRhref}[2]{%
  \href{http://www.ams.org/mathscinet-getitem?mr=#1}{#2}
}
\providecommand{\href}[2]{#2}

\end{document}